\documentclass[12pt]{amsart}

\usepackage{lineno,hyperref}
\modulolinenumbers[5]

\usepackage{enumerate}

\usepackage{geometry}                
\geometry{letterpaper}                   
\usepackage{graphicx}
\usepackage{amssymb}
\usepackage{float}
\usepackage{hyperref} 
\usepackage{bbm}
\usepackage{amsmath,amscd}
\usepackage{algorithm,algorithmic}
\usepackage[english]{babel}
\usepackage[latin1]{inputenc}
\usepackage{times}
\usepackage[T1]{fontenc}
\usepackage{latexsym}
\usepackage{graphics}
\usepackage{color}
\usepackage{mathrsfs}

\setlength{\oddsidemargin}{0in}
\setlength{\evensidemargin}{0in}
\setlength{\textwidth}{6.5in}
\setlength{\textheight}{9in}

\newtheorem{theorem}{Theorem}[section]
\newtheorem{proposition}[theorem]{Proposition}
\newtheorem{lemma}[theorem]{Lemma}
\newtheorem{corollary}[theorem]{Corollary}

\theoremstyle{definition}
\newtheorem{definition}[theorem]{Definition}
\newtheorem{example}[theorem]{Example}

\theoremstyle{remark}
\newtheorem{remark}[theorem]{Remark}

\numberwithin{equation}{section}



\DeclareGraphicsRule{.tif}{png}{.png}{`convert #1 `dirname #1`/`basename #1 .tif`.png}

\newcommand{\bsi}{\boldsymbol{\sigma}}

\newcommand{\cE}{\mathcal{E}}

\newcommand{\cN}{\mathcal{N}}

\newcommand{\cL}{\mathcal{L}}

\newcommand{\cP}{\mathcal{P}}

\newcommand{\cK}{\mathcal{K}}

\newcommand{\bP}{\mathbb{P}}
\newcommand{\C}{\mathbb{C}}

\newcommand{\bE}{\mathbb{E}}

\newcommand{\bN}{\mathbb{N}}
\newcommand{\Z}{\mathbb{Z}}

\newcommand{\R}{\mathbb{R}}

\newcommand{\ep}{\epsilon }
\newcommand{\la}{\lambda }

\newcommand{\de}{\delta }

\newcommand{\si}{\sigma }

\newcommand{\ga}{\gamma }
\newcommand{\Ga}{\Gamma }

\newcommand{\Om}{\Omega }

\newcommand{\ones}{\mathbbm{1}}
\newcommand{\one}{\mathbf{1}}

\newcommand{\Diag}{\operatorname{Diag}}

\newcommand{\rea}{\operatorname{Re}}

\newcommand{\bd}[1]{\partial #1}

\newcommand{\Mod}{\operatorname{Mod}}

\newcommand{\MC}{\operatorname{MC}}

\newcommand{\defeq}{\mathrel{\mathop:}=}
\newcommand{\cCeff}{\mathop{\mathcal{C}_{\eff}}}
\newcommand{\cReff}{\mathcal{R}_{\eff}}

\newcommand{\eff}{\textrm{eff}}
\newcommand{\Adm}{\operatorname{Adm}}
\newcommand{\ext}{\operatorname{ext}}
\newcommand{\Dom}{\operatorname{Dom}}
\newcommand{\BL}{\operatorname{BL}}
\newcommand{\co}{\operatorname{co}}

\newcommand{\bi}{\begin{itemize}}
\newcommand{\ei}{\end{itemize}}

\newcommand{\vt}{\vspace{0.2cm}}

\title{Blocking duality for $p$-modulus on networks and applications} 
\author[Albin]{Nathan Albin$^1$}
\author[Clemens]{Jason  Clemens$^2$}
\author[Fernando]{Nethali Fernando$^1$}
\author[Poggi-Corradini]{Pietro Poggi-Corradini$^1$}

\thanks{Research supported by NSF n.~1201427 and n.~1515810}

\address{$^1$ Department of Mathematics, Kansas State University, Manhattan, KS}
\email{albin@math.ksu.edu} 
\email{tnethali@ksu.edu}
\email{pietro@math.ksu.edu}
\address{$^2$ Missouri Valley College, Marshall, MO}
\email{clemensj@moval.edu} 

\begin{document}

\subjclass[2010]{90C27}

\begin{abstract}
  This paper explores the implications of blocking duality---pioneered
  by Fulkerson et al.---in the context of $p$-modulus on
  networks. Fulkerson's blocking duality is an analogue on networks to
  the method of conjugate families of curves in the plane.  The
  technique presented here leads to a general framework for studying
  families of objects on networks; each such family has a
  corresponding dual family whose $p$-modulus is essentially the
  reciprocal of the original family's.

  As an application, we give a modulus-based proof for the fact that
  effective resistance is a metric on graphs.  This proof immediately
  generalizes to yield a family of graph metrics, depending on the
  parameter $p$, that continuously interpolates among the
  shortest-path metric, the effective resistance metric, and the
  mincut ultrametric. In a second application, we establish a
  connection between Fulkerson's blocking duality and the
  probabilistic interpretation of modulus. This connection, in turn,
  provides a straightforward proof of several monotonicity properties
  of modulus that generalize known monotonicity properties of
  effective resistance.  Finally, we use this framework to expand on a
  result of Lov\'{a}sz in the context of randomly weighted graphs.
\end{abstract}

\keywords{$p$-modulus, blocking duality, effective resistance, randomly weighted graphs}

\maketitle

\section{Introduction}
Modulus on graphs (or networks) is a very flexible and general tool for measuring the richness of families of objects defined on a networks. For example, the underlying graphs can be directed or undirected, simple or multigraphs, weighted or unweighted. Also the objects that are being measured  can be very different. For instance, here are some flavors of modulus that  the first and last author have been studying:
\begin{itemize}
\item[-] {\it Connecting modulus.} This quantifies the richness of families of  walks connecting two given sets of vertices. By varying a parameter $p$, modulus generalizes classical quantities such as effective resistance (which only makes sense on undirected graphs), max flow/min cut, and shortest-path, see \cite{abppcw:ecgd2015}. Applications include new flexible centrality measures that have been used for modeling epidemic mitigation, see \cite{spcsa:jcam2016}. 
\item[-] {\it Loop modulus.} Looking at families cycles in a graph gives information about clustering and community detection, see \cite{spcas:PRE2017}.
\item[-] {\it Spanning tree modulus.} The modulus of the family of all spanning trees gives deep insights into the degree of connectedness of a network as well as exposing an interesting hierarchical structure, see   \cite{achpcst}.
\end{itemize}
The purpose of this paper is to develop the theory of Fulkerson blocking duality for modulus.
In Section \ref{sec:prelim} we recall the theory of modulus on networks. Then in Section \ref{sec:fulkerson}  and \ref{sec:examples} we develop the theory of Fulkerson duality for modulus. Also, in Section \ref{sec:prob}, we relate Fulkerson duality to Lagrangian duality and the probabilistic interpretation of modulus developed in \cite{abppcw:ecgd2015,apc,APCDSG}. 
Finally, we propose several applications of Fulkerson duality to demonstrate its
power and flexibility:
\begin{itemize}
\item  In section \ref{sec:metrics}, we give a new proof of the well-known fact that effective resistance is a metric on graphs, see for instance \cite[Corollary 10.8]{levin-peres-wilmer2009} for a proof based on commute times and \cite[Exercise 9.8]{levin-peres-wilmer2009} for one based on current flows. Assuming Fulkerson duality, our proof in Theorem \ref{thm:deltap} is very short and compelling. But it also has the added advantage of being the only proof we know that easily generalizes to a wider family of graph metrics based on modulus that continuously interpolate among the shortest-path metric, the effective resistance metric, and an ultrametric related to mincuts. None of the other classical proofs that effective resistance is a metric appear to generalize in this fashion. 
\item Furthermore, our proof in Theorem \ref{thm:deltap}, based on Fulkerson duality, allows us to establish the ``anti-snowflaking'' exponent for this family of graph metrics. Namely, we are able to find the exact largest exponent that each such metric can be raised to, while still being a metric on arbitrary graphs.
\item In Section \ref{sec:monotonicity}, we establish some useful monotonicity properties of modulus on a weighted graph $G=(V,E,\si)$ with respect to the edge-conductances $\si(e)$  (Theorem \ref{thm:monotonicity}).  Two of these properties generalize well-known facts about the behavior of resistor networks when a resistor's value is changed.  The Fulkerson blocker approach provides a third monotonicity property related to the expected edge usages of certain random objects on a graph.
\item Finally, in section \ref{sec:lovasz}, we use Fulkerson duality and the previously mentioned monotonicity property to study randomly weighted graphs. We first reinterpret and expand on some results of Lov\'{a}sz from \cite{lovasz:jcta2001}. We then establish a lower bound for the expected $p$-modulus of a family of objects in terms of modulus of the same family on the deterministic graph with edge weights given by their respective expected values (Theorem \ref{thm:bounds}).
\end{itemize}

\section{Preliminaries}\label{sec:prelim}
\subsection{Modulus in the Continuum}
\label{sec:mod-cont}

The theory of conformal modulus was originally developed in complex
analysis, see Ahlfors' comment on p.~81 of \cite{ahlfors1973}. The
more general theory of $p$-modulus grew out of the study of
quasiconformal maps, which generalize the notion of conformal maps to
higher dimensional real Euclidean spaces and, in fact, to abstract
metric measure spaces.  Intuitively, $p$-modulus provides a method for
quantifying the richness of a family of curves, in the sense that a
family with many short curves will have a larger modulus than a family
with fewer and longer curves.  The parameter $p$ tends to favor the
``many curves'' aspect when $p$ is close to $1$ and the ``short
curves'' aspect as $p$ becomes large. This phenomenon was explored
more precisely in \cite{abppcw:ecgd2015} in the context of networks.
The concept of discrete modulus on networks is not new, see for
instance
\cite{duffin:jmaa1962,schramm:ijm1993,haissinsky:fourier2009}. However,
recently the authors have started developing the theory of $p$-modulus
as a graph-theoretic quantity \cite{APCDSG,abppcw:ecgd2015}, with the
goal of finding applications, for instance to the study of epidemics
\cite{spcsa:jcam2016,GASSP}.

The concept of blocking duality explored in this paper is an analog of
the concept of conjugate families in the continuum.  As motivation for
the discrete theory to follow, then, let us recall the relevant
definitions from the continuum theory.  For now, it is convenient to
restrict attention to the $2$-modulus of curves in the plane, which,
as it happens, is a conformal invariant and thus has been carefully
studied in the literature.

Let $\Om$ be a domain in $\C$, and let $E,F$ be two continua in
$\overline{\Omega}$.  Define $\Ga=\Ga_{\Om}(E,F)$ to be the family of
all rectifiable curves connecting $E$ to $F$ in $\Omega$.  A
\emph{density} is a Borel measurable function
$\rho:\Om\rightarrow[0,\infty)$. We say that $\rho$ is
\emph{admissible} for $\Ga$ and write $\rho\in\Adm(\Ga)$, if
\begin{equation}
  \label{eq:cont-adm}
  \int_\ga \rho\;ds \geq 1\qquad\forall \ga\in\Ga.
\end{equation}
Now, we define the modulus of  $\Gamma$ as
\begin{equation}
\label{eq:cont-mod}
\Mod_2(\Ga) \defeq \inf_{\rho \in \Adm(\Gamma)} \int_{\Om} \rho^{2} dA.
\end{equation}
\begin{example}[The Rectangle]\label{ex:rectangle}
Consider a rectangle 
\[
\Om \defeq \{ z = x + i y \in \C : 0 < x < L, 0 < y < H \}
\]  
of height $H$ and length $L$.  Set
$E \defeq \{ z \in \overline{\Om} : \rea z = 0 \}$ and
$F \defeq \{ z \in \overline{\Om} : \rea z = L\}$ to be the leftmost
and rightmost vertical sides respectively. If
$\Gamma = \Gamma_{\Om}(E,F)$ then,
\begin{equation}\label{eq:rectangle}
\Mod_2(\Gamma) = \frac{H}{L}.
\end{equation}
To see this, assume $\rho \in \Adm(\Gamma)$. Then for all $0 < y < H$,
$\gamma_{y}(t) \defeq t + iy$ is a curve in $\Ga$, so
\[
\int_{\ga_y}\rho ds=\int_{0}^{L} \rho(t,y) dt \ge 1.
\]
Using the  Cauchy-Schwarz inequality we obtain,
\begin{align*}
  1 \le\left[ \int_{0}^{L} \rho(t,y)  dt \right]^{2} \le L \int_{0}^{L} \rho^{2}(t,y) dt.
\end{align*}
In particular, $L^{-1}\le \int_{0}^{L} \rho^{2}(t,y) dt$. Integrating
over $y$, we get
$$
\frac{H}{L}  \le \int_{\Om} \rho^{2} dA.
$$
So since $\rho$ was an arbitrary admissible density,
$\Mod_2(\Gamma) \ge \frac{H}{L}$.

In the other direction, define $\rho_{0}(z) = \frac{1}{L} \ones_{\Om}(z)$ and observe that $\int_{\Om} \rho_0^{2}dA = \frac{H L}{L^{2}} = \frac{H}{L}$. Hence, if we show that $\rho_0 \in \Adm(\Gamma)$, then $\Mod(\Gamma) \le \frac{H}{L}$. To see this note that for any $\ga\in \Ga$:
$$
\int_{0}^{L} \frac{1}{L} | \dot{\gamma}(t)| dt \ge \frac{1}{L} \int_{0}^{L} | \rea \dot{\gamma}(t) | dt \ge \frac{1}{L} \left( \rea \gamma(1) - \rea \gamma(0)  \right) \ge 1.
$$
This proves the formula (\ref{eq:rectangle}).
\end{example}
A famous and very useful result in this context is the notion of a
{\it conjugate family} of a connecting family. For instance, in the
case of the rectangle, the conjugate family $\Ga^*=\Ga^*_{\Om}(E,F)$
for $\Ga_{\Om}(E,F)$ consists of all curves that ``block'' or
intercept every curve $\ga\in\Ga_{\Om}(E,F)$. It's clear in this case
that $\Ga^*$ is also a connecting family, namely it includes every
curve connecting the two horizontal sides of $\Om$. In particular, by
(\ref{eq:rectangle}), we must have $\Mod_2(\Ga^*)=L/H$. So we deduce
that
\begin{equation}\label{eq:conjugate-duality}
\Mod_2(\Ga_{\Om}(E,F))\cdot \Mod_2(\Ga^*_{\Om}(E,F)) =1.
\end{equation}
One reason this reciprocal relation is useful is that upper-bounds for
modulus are fairly easy to obtain by choosing reasonable admissible
densities and computing their energy. However, lower-bounds are
typically harder to obtain. However, when an equation like
(\ref{eq:conjugate-duality}) holds, then upper-bounds for the modulus
of the conjugate family translate to lower-bounds for the given
family.

In higher dimensions, say in $\R^3$, the conjugate family of a
connecting family of curves consists of a family of surfaces, and
therefore one must consider the concept of \emph{surface modulus}, see
for instance \cite{rajala:gafa2005} and references therein.  It is
also possible to generalize the concept of modulus by replacing the
exponent $2$ in~\eqref{eq:cont-mod} with $p\ge 1$ and by replacing
$dA$ with a different measure.  

The principal aim of this paper is to establish a conjugate duality
formula similar to (\ref{eq:conjugate-duality}) for $p$-modulus on
networks, which we call {\it blocking duality}.

\subsection{Modulus on Networks}

A general framework for modulus of objects on networks was developed
in~\cite{apc}.  In what follows, $G= (V,E,\si)$ is taken to be a
finite graph with vertex set $V$ and edge set $E$. The graph may be
directed or undirected and need not be simple.  In general, we shall
assume a weighted graph with each edge assigned a corresponding weight
$0<\si(e)<\infty$.  When we refer to an unweighted graph, we shall
mean a graph for which all weights are assumed equal to one.

The theory in~\cite{apc} applies to any finite family of ``objects''
$\Ga$ for which each $\ga\in\Ga$ can be assigned an associated
function $\cN(\ga,\cdot): E\rightarrow \R_{\ge 0}$ that measures the
{\it usage of edge $e$ by $\ga$}.  Notationally, it is convenient to
consider $\cN(\ga,\cdot)$ as a row vector
$\cN(\ga,\cdot)\in\R_{\ge 0}^E$, indexed by $e\in E$.  In order to
avoid pathologies, it is useful to assume that $\Ga$ is non-empty and that each $\gamma\in\Gamma$
has positive usage on at least one edge. When this is the case, we will say that $\Ga$ is {\it non-trivial}.
In the following it will be useful to define the quantity:
\begin{equation}\label{eq:usagelb}
\cN_{\rm min}:=\min_{\ga\in\Ga}\min_{e: \cN(\ga,e)\neq 0} \cN(\ga,e).
\end{equation}
Note that, for $\Ga$ non-trivial, $\cN_{\rm min}>0$.

Some examples of objects and
their associated usage functions are the following.
\begin{itemize}
\item To a walk $\ga=x_0\ e_1\ x_1\ \cdots\ e_n\ x_n$ we can
  associate the traversal-counting function $\cN(\ga,e)=$ number times
  $\ga$ traverses $e$. In this case $\cN(\ga,\cdot)\in\Z_{\ge 0}^E$.
  \vt
\item To each subset of edges $T\subset E$ we can associate the
  characteristic function $\cN(T,e)=\ones_T(e)=1$ if $e\in T$ and $0$
  otherwise.  Here, $\cN(\ga,\cdot)\in\{0,1\}^E$.  \vt
\item To each flow $f$ we can associate the volume function
  $\cN(f,e)=|f(e)|$. Therefore, $\cN(\ga,\cdot)\in \R_{\ge 0}^E$.
\end{itemize}

As a function of two variables, the function $\cN$ can be thought of
as a matrix in $\mathbb{R}^{\Gamma\times E}$, indexed by pairs
$(\gamma,e)$ with $\gamma$ an object in $\Ga$ and $e$ an edge in $E$.
This matrix $\cN$ is called the \emph{usage matrix} for the family $\Ga$. Each row
of $\cN$ corresponds to an object $\ga\in\Ga$ and records the usage of
edge $e$ by $\ga$. At times will write $\cN(\Ga)$ instead of $\cN$, to avoid ambiguity. Note, that the families $\Ga$ under consideration
may very well be infinite (e.g.~families of walks), so $\cN$ may have
infinitely many rows.  For this paper, we shall assume $\Ga$ is
finite. 

This assumption is not quite as restrictive as it might seem.
In~\cite{APCDSG} it was shown that any family $\Gamma$ with an
integer-valued $\cN$ can be replaced, without changing the modulus, by
a finite subfamily.  For example, if $\Gamma$ is the set of all walks
between two distinct vertices, the modulus can be computed by
considering only simple paths.  This result implies a similar
finiteness result for any family $\Gamma$ whose usage matrix $\cN$ is
rational with positive entries bounded away from zero.

By analogy to the continuous setting, we define a \emph{density} on
$G$ to be a nonnegative function on the edge set:
$\rho:E\to[0,\infty)$.  The value $\rho(e)$ can be thought of as the
{\it cost of using edge $e$}.  It is notationally useful to think of
such functions as column vectors in $\R_{\ge 0}^E$.  In order to
mimic~\eqref{eq:cont-adm}, we define for an object $\ga\in\Ga$
\[
\ell_\rho(\ga):=\sum_{e\in E} \cN(\ga,e)\rho(e) = (\cN \rho)(\ga),
\]
representing the {\it total usage cost} for $\ga$ with the given edge
costs $\rho$.  In linear algebra notation, $\ell_\rho(\cdot)$ is the
column vector resulting from the matrix-vector product $\cN\rho$.  As
in the continuum case, then, a density $\rho\in\R_{\ge 0}^E$ is called
{\it admissible for $\Ga$}, if
\[
\ell_\rho(\ga) \geq 1\qquad\forall \ga\in\Ga;
\quad
\text{or equivalently, if}
\quad
\ell_\rho(\Ga)\defeq\inf_{\ga\in\Ga}\ell_\rho(\ga) \geq 1.
\]
In matrix notation, $\rho$ is admissible if
\[
\cN \rho \geq {\one},
\]
where $\one$ is the column vector of ones and the inequality is
understood to hold elementwise.  By analogy, we define the set
\begin{equation}\label{eq:Adm}
\Adm(\Ga)=\left\{\rho\in\R_{\ge 0}^E: \cN \rho\geq 1\right\}
\end{equation}
to be the set of admissible densities.

Now, given an exponent $p\ge 1$ we define the \emph{$p$-energy} on
densities, corresponding to the area integral from the continuum case,
as
\begin{equation*}
  \cE_{p,\si}(\rho) \defeq \sum_{e\in E} \si(e)\rho(e)^p,
\end{equation*}
with the weights $\si$ playing the role of the area element $dA$.  In
the unweighted case ($\sigma\equiv 1$), we shall use the notation
$\cE_{p,1}$ for the energy. For $p=\infty$, we also define the
unweighted and weighted {\it $\infty$-energy} respectively as
\begin{equation*}
  \cE_{\infty,1}(\rho) \defeq
  \lim_{p\to\infty}\left(\cE_{p,\si}(\rho)\right)^{\frac{1}{p}} =
  \max_{e\in E}\rho(e)
\end{equation*}
and
\begin{equation*}
  \cE_{\infty,\sigma}(\rho) \defeq
  \lim_{p\to\infty}\left(\cE_{p,\si^p}(\rho)\right)^{\frac{1}{p}} =
  \max_{e\in E}\sigma(e)\rho(e)  
\end{equation*}
This leads to the following definition.

\begin{definition}
  Given a graph $G= (V,E,\si)$, a family of objects $\Ga$ with usage
  matrix $\cN\in\mathbb{R}^{\Ga\times E}$, and an exponent
  $1\le p\le\infty$, the {\it $p$-modulus} of $\Gamma$ is
\[ \Mod_{p,\si}(\Ga)\defeq \inf_{\rho\in \Adm(\Ga)}\cE_{p,\si}(\rho)\]
\end{definition}
Equivalently, $p$-modulus corresponds to the following optimization problem
\begin{equation}
  \label{eq:mod-as-cvx}
  \begin{split}
    \text{minimize} &\qquad \cE_{p,\si}(\rho) \\
    \text{subject to} &\qquad  \rho\ge 0,\quad \cN \rho \geq 1
  \end{split}
\end{equation}
where each object $\ga\in\Ga$ determines one inequality constraint.

\begin{remark}\label{rem:remarks}
  \begin{itemize}
  \item[(a)] When $\rho_0\equiv 1$, we drop the subscript and
    write $\ell(\ga)\defeq\ell_{\rho_0}(\ga)$. If $\ga$ is a walk,
    then $\ell(\ga)$ simply counts the number of hops that the walk
    $\ga$ makes.
  \item[(b)] For $1<p<\infty$ a unique extremal density $\rho^*$
    always exists and satisfies
    $0\leq \rho^*\leq \cN_{\text{min}}^{-1}$, where $\cN_{\text{min}}$
    is defined in (\ref{eq:usagelb}). Existence and uniqueness
    follows by compactness and strict convexity of $\cE_{p,\si}$, see
    also Lemma 2.2 of \cite{abppcw:ecgd2015}.  The upper bound on
    $\rho^*$ follows from the fact that each row of $\cN$ contains at
    least one nonzero entry, which must be at least as large as
    $\cN_{\text{min}}$.  In the special case when $\cN$ is integer
    valued, the upper bound can be taken to be $1$.
  \end{itemize}
\end{remark}
The next result shows that modulus is a ``capacity'', in the mathematical sense, on families of objects. This is a known fact, see \cite[Prop. 3.4]{APCDSG} for the case of families of walks. We reproduce a proof here for completeness.
\begin{proposition}[Basic Properties]\label{prop:basicprop}
Let $G=(V,E,\si)$ be a simple finite graph with edge-weights $\si\in\R_{>0}^E$. For simplicity, all families of objects on $G$ are assumed to be non-trivial. Then, for $p\in[1,\infty]$, the following hold:
\begin{itemize}
\item[(a)] {\bf  Monotonicity:} Suppose $\Ga$ and $\Ga'$ are families of objects on $G$ such 
that $\Ga\subset\Ga'$, meaning that the matrix $\cN(\Ga)$ is the
    restriction of the matrix $\cN(\Ga')$ to the rows from $\Ga$. Then,  
\begin{equation}\label{eq:monotonicity}
\Mod_{p,\si}(\Ga)\leq \Mod_{p,\si}(\Ga').
\end{equation}
\item[(b)] {\bf Countable Subadditivity:} Suppose $1\le p<\infty$, and let $\{\Ga_j\}_{j=1}^\infty$ be a sequence of families of objects on $G$. then
\begin{equation}\label{eq:subadditivity}
\Mod_{p,\si}\left(\bigcup_{j=1}^\infty \Ga_j\right)\le \sum_{j=1}^\infty \Mod_{p,\si}(\Ga_j).
\end{equation}
\end{itemize}
\end{proposition}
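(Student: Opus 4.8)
For part (a), the plan is to observe that the admissible sets are nested in the direction opposite to the families. Since $\cN(\Ga)$ is obtained from $\cN(\Ga')$ by deleting rows, every constraint $\cN(\ga,\cdot)\rho\ge 1$ arising from an object $\ga\in\Ga$ is already among the constraints defining $\Adm(\Ga')$. Hence $\Adm(\Ga')\subseteq\Adm(\Ga)$, and taking the infimum of the same energy functional over the larger feasible set $\Adm(\Ga)$ can only decrease it. This gives $\Mod_{p,\si}(\Ga)\le\Mod_{p,\si}(\Ga')$ immediately, with no need to distinguish the cases $p<\infty$ and $p=\infty$.

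For part (b), I would first dispose of the trivial case: if $\sum_j\Mod_{p,\si}(\Ga_j)=\infty$ there is nothing to prove, so assume the sum is finite. Fix $\epsilon>0$ and, for each $j$, choose a near-optimal density $\rho_j\in\Adm(\Ga_j)$ with $\cE_{p,\si}(\rho_j)\le\Mod_{p,\si}(\Ga_j)+\epsilon 2^{-j}$. Using near-optimal rather than extremal densities is deliberate: it lets the argument cover $p=1$, where the infimum need not be attained and Remark~\ref{rem:remarks}(b) does not apply.

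The heart of the proof is to combine these into a single candidate density for the union by taking the $p$-th power superposition
\[
\rho(e)\defeq\left(\sum_{j=1}^\infty\rho_j(e)^p\right)^{1/p},\qquad e\in E.
\]
Two facts must then be checked. First, $\rho$ is admissible for $\bigcup_j\Ga_j$: any object $\ga$ in the union lies in some $\Ga_k$, and since $\rho(e)\ge\rho_k(e)$ pointwise we have $\ell_\rho(\ga)\ge\ell_{\rho_k}(\ga)\ge 1$. Second, because the edge set $E$ is finite and all terms are nonnegative, interchanging the finite outer sum with the infinite inner sum (Tonelli) yields
\[
\cE_{p,\si}(\rho)=\sum_{e\in E}\si(e)\sum_{j}\rho_j(e)^p=\sum_j\cE_{p,\si}(\rho_j).
\]

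Combining these, $\Mod_{p,\si}(\bigcup_j\Ga_j)\le\cE_{p,\si}(\rho)=\sum_j\cE_{p,\si}(\rho_j)\le\sum_j\Mod_{p,\si}(\Ga_j)+\epsilon$, and letting $\epsilon\downarrow 0$ finishes the argument. The only real subtlety, and the reason the hypothesis $p<\infty$ is essential, lies in that energy identity: raising the superposition to the $p$-th power exactly undoes the $1/p$ root, converting the infinite combination of densities into an \emph{exact} sum of energies. For $p=\infty$ the energy is a maximum over edges rather than a sum, so this telescoping fails and subadditivity in the stated form does not hold.
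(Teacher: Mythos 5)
Your proof is correct and follows essentially the same route as the paper's: part (a) via the reverse inclusion of admissible sets, and part (b) via the superposition $\rho=(\sum_j\rho_j^p)^{1/p}$, pointwise domination for admissibility, and Tonelli for the exact energy identity. The only difference is your use of $\epsilon 2^{-j}$-near-optimal densities instead of exact minimizers, a harmless precaution (minimizers in fact exist for all $1\le p<\infty$ by compactness of sublevel sets) that does not change the argument.
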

\begin{proof}
For monotonicity, note that  $\Adm(\Ga')\subset \Adm(\Ga)$.

For subadditivity, we first fix $p\in [1,\infty)$. Let $\Gamma \defeq \bigcup_{j=1}^{\infty} \Gamma_j$. For each $j$, choose $\rho_{j} \in \Adm(\Gamma_j)$ such that 
\[
\cE_{p,\si}(\rho_j) = \Mod_{p,\si}\left( \Gamma_j \right).
\]
 Assuming that the right-hand side of~\eqref{eq:subadditivity} is finite, then, since $\si>0$ and $\rho_j\ge 0$,
 \begin{equation*}
   \sum_{e\in E}\si(e)\sum_{j=1}^\infty \rho_j(e)^p = \sum_{j=1}^\infty \sum_{e\in E}\si(e)\rho_j(e)^p
   = \sum_{j=1}^\infty \Mod_{p,\si}(\Ga_j) < \infty.
 \end{equation*}
So, $\rho \defeq \left( \sum_{j=1}^{\infty} \rho_{j}^{p}  \right)^{\frac{1}{p}}$ is also finite. For any $\gamma \in \Gamma$, there exists $k \in \bN$ so that $\gamma \in \Gamma_{k}$. In particular, since $\rho \ge \rho_{k}$, we have $\ell_{\rho}(\gamma) \ge 1$. This shows that $\rho \in \Adm(\Gamma)$. Moreover,
\begin{align*}
\Mod_{p,\si} \Gamma &\le \cE_{p,\si}(\rho) = \sum_{e \in E}\si(e) \rho(e)^{p} = \sum_{e \in E}\si(e) \sum_{j=1}^{\infty} \rho_{j}(e)^{p} = \sum_{j=1}^{\infty} \sum_{e \in E}\si(e) \rho_{j}(e)^{p} \\
&= \sum_{j=1}^{\infty} \cE_{p,\si}(\rho_{j}) =\sum_{j=1}^{\infty} \Mod_{p,\si}(\Gamma_j).
\end{align*}

We leave the case $p=\infty$ to the reader (one can even replace the sum with max).
\end{proof}

\subsection{Connection to classical quantities}

The concept of $p$-modulus generalizes known classical ways of
measuring the richness of a family of walks.  Let $G=(V,E)$ 
and two vertices $a$ and $b$ in $V$ be given. We define the {\it connecting
  family} $\Ga(a,b)$ to be the family of all simple paths in $G$ that
start at $a$ and end at $b$.  To this family, we assign the usage
function $\cN(\gamma,e)$ to be $1$ when $e\in\gamma$ and $0$
otherwise. Classically, there are three main ways to measure the richness of $\Ga(a,b)$.
\begin{itemize}
\item  {\it Mincut:}
A subset $S\subset V$ is called a $ab$-{\it cut} if $a\in S$ and $b\not\in S$.
To every $ab$-cut $S$ we assign the edge usage
$\cN(S,e)=1$ for every $e=\{x,y\}\in E$ such that $x\in S$ and $y\not\in S$; and $\cN(S,e)=0$ otherwise.  The support of $\cN(S,\cdot)$ is also known as the {\it edge-boundary} $\bd S$. Given edge-weights $\si$, the {\it size} of an $ab$-cut is measured by
$|\bd S|:=\sum_{e\in E}\si(e)\cN(S,e)$. 
We define the {\it min cut} between $a$ and $b$ to:
\begin{equation*}
  \MC(a,b) := \min\left\{|\bd S|:S\text{ is an $ab$-cut}\right\}.
\end{equation*}
\item {\it Effective Resistance:} When $G$ is undirected, it can be thought of as an electrical network with edge conductances given by the weights $\si$,
see~\cite{doyle-snell1984}. Then {\it effective resistance} $\cReff(a,b)$ is the voltage drop necessary to pass $1$ Amp of current between $a$ and $b$ through $G$
\cite{doyle-snell1984}. In this case, given two vertices $a$ and
$b$ in $V$, we write $\cCeff(a,b):=\cReff(a,b)^{-1}$ for the {\rm effective conductance}
between $a$ and $b$. 
\item {\it Shortest-path:} Finally, the (unweighted) {\it shortest-path distance} between $a$ and $b$ refers to the length of the shortest path from $a$ to $b$, where the length of a path $\ga$ is 
$\ell(\ga):=\sum_{e\in E}\cN(\ga,e),$
and we write \[\ell(\Ga):=\inf_{\ga\in \Ga}\ell(\ga)\] for the shortest length of a family $\Ga$.
\end{itemize}
The following result is a slight modification of
the results in \cite[Section~5]{abppcw:ecgd2015}, taking into account
the definition of $\cN_{\text{min}}$ in (\ref{eq:usagelb}).

\begin{theorem}[\cite{abppcw:ecgd2015}]\label{thm:generalize}
  Let $G=(V,E,\si)$ be a graph with edge weights $\sigma$. Let $\Ga$ be a
  nontrivial family of objects on $G$ with usage matrix $\cN$ and let
  $\sigma(E) := \sum_{e\in E}\sigma(e)$. Then the function
  $ p\mapsto \Mod_{p,\si}(\Ga)$ is continuous for
  $1\leq p< \infty$, and the following two monotonicity properties hold
  for $1\le p \le p' <\infty$.
  \begin{align}
    \label{eq:monotone-decr}
    \cN_{\text{min}}^p \Mod_{p,\si}(\Ga) &\ge \cN_{\text{min}}^{p'} \Mod_{p',\si}(\Ga),\\
    \label{eq:monotone-incr}
    \left(\si(E)^{-1}\Mod_{p,\si}(\Ga)\right)^{1/p} &\le
    \left(\si(E)^{-1}\Mod_{p',\si}(\Ga)\right)^{1/p'}.
  \end{align}
 Moreover, let $a\neq b$ in $V$ be given and set $\Ga$ equal to the connecting family $\Ga(a,b)$. Then,
{\renewcommand\arraystretch{1.8} 
\[
\begin{array}{lll}
\bullet\ \text{For $p=1$,} & 
\Mod_{1,\sigma}(\Ga)=\min\{|\bd S|:\text{\rm $S$ an $ab$-cut}\} = \MC(a,b) &
\text{Min cut.}\\
\bullet\ \text{For $p=2$,}&
\Mod_{2,\sigma}(\Ga)=\cCeff(a,b) = \cReff(a,b)^{-1} &
\text{Effective conductance.}\\
\bullet\  \text{For $p=\infty$,} &
\Mod_{\infty,1}(\Ga)=\lim\limits_{p\to\infty}\Mod_{p,\si}(\Gamma)^{\frac{1}{p}} =\ell(\Ga)^{-1} & \text{Reciprocal of shortest-path.}\\
\end{array}
\]
}
\end{theorem}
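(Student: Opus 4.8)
The plan is to treat the three assertions of the theorem separately: the two monotonicity inequalities \eqref{eq:monotone-decr} and \eqref{eq:monotone-incr} first, then continuity as a consequence of them, and finally the three classical identifications for the connecting family $\Ga(a,b)$. Throughout I write $M(p):=\Mod_{p,\si}(\Ga)$, and for $1<p<\infty$ I denote by $\rho_p^*$ the unique extremal density from Remark \ref{rem:remarks}(b), whose defining feature for what follows is the pointwise bound $0\le\rho_p^*\le\cN_{\text{min}}^{-1}$. The one structural fact I would lean on repeatedly is that admissibility in \eqref{eq:Adm} does not depend on $p$, so an extremal density for one exponent is automatically a competitor in the modulus problem \eqref{eq:mod-as-cvx} for any other exponent.

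For \eqref{eq:monotone-decr}, fix $1\le p\le p'<\infty$ and test with $\rho_p^*$. Since $\cN_{\text{min}}\rho_p^*(e)\in[0,1]$ for each $e$ and $t\mapsto t^q$ is order-reversing on $[0,1]$ as $q$ grows, one gets the pointwise inequality $\cN_{\text{min}}^p\rho_p^*(e)^p\ge\cN_{\text{min}}^{p'}\rho_p^*(e)^{p'}$; multiplying by $\si(e)$ and summing yields $\cN_{\text{min}}^pM(p)=\cN_{\text{min}}^p\cE_{p,\si}(\rho_p^*)\ge\cN_{\text{min}}^{p'}\cE_{p',\si}(\rho_p^*)\ge\cN_{\text{min}}^{p'}M(p')$, the last step because $\rho_p^*$ is admissible and hence a competitor for $M(p')$. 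For \eqref{eq:monotone-incr} I would instead recognize $(\si(E)^{-1}\cE_{q,\si}(\rho))^{1/q}$ as the $L^q$-norm of $\rho$ against the probability measure $\si/\si(E)$, which is nondecreasing in $q$ for fixed $\rho$ by the power-mean (Jensen) inequality. Applying this to $\rho=\rho_{p'}^*$ and then using that $\rho_{p'}^*$ is admissible for exponent $p$ gives the claim.

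Continuity for $1\le p<\infty$ then follows by sandwiching, requiring no further optimization. Rearranging \eqref{eq:monotone-decr} and \eqref{eq:monotone-incr} bounds $M$ both above and below, in a neighborhood of any fixed $p_0$, by explicit functions of $p$ (carrying factors such as $\cN_{\text{min}}^{p_0-p}$ and powers $(\si(E)^{-1}M(p_0))^{p/p_0}$) that each converge to $M(p_0)$ as $p\to p_0$; since these bounds pinch $M(p_0)$ from both sides, both left- and right-continuity follow.

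Finally, the three classical identifications, which I would establish by matching \eqref{eq:mod-as-cvx} to the standard variational or linear-programming characterizations, following \cite{abppcw:ecgd2015}. For $p=1$ the modulus problem is a linear program: $\rho=\ones_{\bd S}$ for a minimizing cut $S$ is admissible (every $ab$-path crosses $\bd S$) with energy $|\bd S|=\MC(a,b)$, giving $M(1)\le\MC(a,b)$, while the reverse inequality comes from LP duality, whose dual is a maximum-flow problem, together with the max-flow--min-cut theorem. For $p=2$ I would use the Dirichlet characterization $\cCeff(a,b)=\min\{\sum_e\si(e)(v(x)-v(y))^2:v(a)-v(b)=1\}$ and the substitution $\rho(e)=|v(x)-v(y)|$, under which a unit potential drop telescopes to at least $1$ along every path (matching admissibility) while the $2$-energy matches the Dirichlet energy, identifying the two optima. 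For $p=\infty$ I would compute the limit directly: the constant density $\rho\equiv\ell(\Ga)^{-1}$ is admissible with $\cE_{\infty,1}=\ell(\Ga)^{-1}$, and any admissible $\rho$ must place at least $\ell(\Ga)^{-1}$ on some edge of a shortest path, so $\Mod_{\infty,1}(\Ga)=\lim_{p\to\infty}M(p)^{1/p}=\ell(\Ga)^{-1}$. I expect the monotonicity and continuity to be routine once the bound $\rho_p^*\le\cN_{\text{min}}^{-1}$ is in hand; the main obstacle is the $p=2$ identification, since it requires carefully verifying that the extremal modulus density is realized by a harmonic potential and that no admissible density beats the one coming from the voltage function, with the $p=1$ case next in difficulty as it rests on LP duality rather than an elementary argument.
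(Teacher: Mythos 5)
Your proposal is correct and follows essentially the same route as the paper, which does not reprove this theorem but only observes that the proof of \cite[Thm.~5.2]{abppcw:ecgd2015} carries over once the bound $0\le\rho^*\le 1$ is replaced by $0\le\rho^*\le\cN_{\text{min}}^{-1}$ --- exactly the bound you exploit to get \eqref{eq:monotone-decr}. The remaining ingredients (the power-mean inequality against the probability measure $\si/\si(E)$ for \eqref{eq:monotone-incr}, the sandwich for continuity, and the LP-duality, Dirichlet-principle, and limiting arguments for $p=1,2,\infty$) match the cited proof.
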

\begin{remark}
  An early version of the case $p=2$ is due to Duffin
  \cite{duffin:jmaa1962}. The proof in \cite{abppcw:ecgd2015} was
  guided by a very general result in metric
  spaces~\cite{heinonen2001}.
\end{remark}

The theorem stated  in \cite[Section~5]{abppcw:ecgd2015} does not hold in this context verbatim, but can be easily adapted.  The only issue to take care of is the value of
$\cN_{\text{min}}$.
Since the previous paper dealt only with families of walks, $\cN$ was
integer valued and, thus, $\cN_{\text{min}}$ could be assumed no
smaller than $1$.  This gave rise to an inequality of the form
$0\le\rho^*\le 1$ that was used to establish a monotonicity property.
When $\cN$ is not restricted to integer values, the bound on $\rho^*$
should be replaced by $0\le\rho^*\le\cN_{\text{min}}^{-1}$ (see
Remark~\ref{rem:remarks} (c)).  Repeating the proof
of~\cite[Thm.~5.2]{abppcw:ecgd2015} with the corrected upper bound and
rephrasing in the current context yields the following theorem.

\begin{example}[Basic Example]\label{ex:basic}
  Let $G$ be a graph consisting of $k$ simple paths in parallel, each
  path taking $\ell$ hops to connect a given vertex $s$ to a given
  vertex $t$. Assume also that $G$ is unweighted, that is
  $\si\equiv 1$. Let $\Ga$ be the family consisting of the $k$ simple
  paths from $s$ to $t$.  Then $\ell(\Ga)=\ell$ and the size of the
  minimum cut is $k$. A straightforward computation shows that
\[
\Mod_p(\Ga)=\frac{k}{\ell^{p-1}}\quad\mbox{for } 1\le p<\infty,\qquad
\Mod_{\infty,1}(\Ga)=\frac{1}{\ell}.
\]
In particular, $\Mod_p(\Ga)$ is continuous in $p$, and
$\lim_{p\to\infty}\Mod_p(\Ga)^{1/p}=\Mod_{\infty,1}(\Ga)$.  Intuitively,
when $p\approx 1$, $\Mod_p(\Ga)$ is more sensitive to the number of
parallel paths, while for $p\gg 1$, $\Mod_p(\Ga)$ is more sensitive to
short walks.
\end{example}

\subsection{Lagrangian Duality and the Probabilistic Interpretation}
\label{sec:dual-prob}

The optimization problem~\eqref{eq:mod-as-cvx} is an ordinary convex
program, in the sense of~\cite[Sec.~28]{Rockafellar1970}.  Existence
of a minimizer follows from compactness, and uniqueness holds when
$1<p<\infty$ by strict convexity of the objective function.
Furthermore, it can be shown that strong duality holds in the sense
that a maximizer of the Lagrangian dual problem exists and has dual
energy equal to the modulus.  The Lagrangian dual problem was derived
in detail in~\cite{abppcw:ecgd2015}.  The Lagrangian dual was later
reinterpreted in a probabilistic setting in~\cite{apc}.  

In order to formulate the probabilistic dual, we let $\cP(\Gamma)$
represent the set of probability mass functions (pmfs) on the set
$\Gamma$.  In other words, $\cP(\Gamma)$ contains the set of vectors
$\mu\in\mathbb{R}_{\ge 0}^\Gamma$ with the property that
$\mu^T\one = 1$.  Given such a $\mu$, we can define a
$\Gamma$-valued random variable $\underline{\gamma}$ with distribution
given by $\mu$:
$\mathbb{P}_\mu\left(\underline{\gamma}=\gamma\right) = \mu(\gamma)$.
Given an edge $e\in E$, the value $\cN(\underline{\gamma},e)$ is again
a random variable, and we represent its expectation (depending on the
pmf $\mu$) as $\mathbb{E}_\mu\left[\cN(\underline{\gamma},e)\right]$.
The probabilistic interpretation of the Lagrangian dual can now be
stated as follows.

\begin{theorem}
  \label{thm:prob-interp}
  Let $G=(V,E)$ be a finite graph with edge weights $\sigma$, and let
  $\Gamma$ be a non-trivial finite family of objects on $G$ with usage matrix
  $\cN$.  Then, for any $1<p<\infty$, letting $q:=p/(p-1)$ be the conjugate exponent to $p$, we have
  \begin{equation}
    \label{eq:prob-interp}
    \Mod_{p,\sigma}(\Gamma)^{-\frac{1}{p}} = 
    \left(
      \min_{\mu\in\cP(\Gamma)}\sum_{e\in E}\sigma(e)^{-\frac{q}{p}}
      \bE_\mu\left[\cN(\underline{\gamma},e)\right]^q
    \right)^{\frac{1}{q}}.
  \end{equation}
  Moreover, any optimal measure $\mu^*$, must satisfy
  \[
  \bE_{\mu^*}\left[\cN(\underline{\gamma},e)\right]=\frac{\si(e)\rho^*(e)^{\frac{p}{q}}}{\Mod_{p,\si}(\Ga)}\qquad\forall
e\in E,
  \]
  where $\rho^*$ is the unique extremal density for $\Mod_{p,\si}(\Ga)$.
\end{theorem}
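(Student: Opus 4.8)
The plan is to obtain \eqref{eq:prob-interp} as the Lagrangian dual of the convex program \eqref{eq:mod-as-cvx}, after a convenient renormalization of the dual variables. Since the objective $\cE_{p,\si}$ is convex, the constraints are linear, and a strictly feasible density exists (any sufficiently large multiple of a strictly positive density satisfies $\cN\rho>\one$ because $\Ga$ is non-trivial and finite), Slater's condition holds and strong duality applies; thus the modulus equals the optimal value of the dual, and it suffices to compute that dual and massage it into the asserted form.

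First I would dualize only the admissibility constraints $\cN\rho\ge\one$, keeping $\rho\ge 0$ as an explicit constraint. Introducing a multiplier $\la\in\R_{\ge0}^\Ga$, the Lagrangian is $L(\rho,\la)=\cE_{p,\si}(\rho)+\la^T(\one-\cN\rho)$, and the dual function is $g(\la)=\inf_{\rho\ge 0}L(\rho,\la)$. The key simplification is that this inner infimum separates over edges: writing $\eta(e):=(\la^T\cN)(e)=\sum_{\ga}\la(\ga)\cN(\ga,e)$, each edge contributes $\inf_{\rho(e)\ge0}\bigl(\si(e)\rho(e)^p-\eta(e)\rho(e)\bigr)$, a one-variable problem solved by $\rho(e)=(\eta(e)/(p\si(e)))^{q/p}$. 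Substituting back and using the conjugacy identities $1+q/p=q$ and $p/q=p-1$ collapses $g(\la)$ to $\la^T\one$ minus an explicit constant multiple of $\sum_e\si(e)^{-q/p}\eta(e)^q$.

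Next I would renormalize. Any nonzero $\la$ can be written $\la=t\mu$ with $t\ge0$ and $\mu\in\cP(\Ga)$, and then $\eta(e)=t\,\bE_\mu[\cN(\underline{\ga},e)]$, so that $g(t\mu)=t-c\,t^q A(\mu)$, where $A(\mu):=\sum_e\si(e)^{-q/p}\bE_\mu[\cN(\underline{\ga},e)]^q$ is exactly the quantity inside the minimum in \eqref{eq:prob-interp} and $c$ is an explicit constant. Maximizing the scalar function $t\mapsto t-ct^qA(\mu)$ over $t\ge0$ is elementary and yields $\max_{t\ge0}g(t\mu)=A(\mu)^{-p/q}$, attained at $t^*=p\,A(\mu)^{-p/q}$. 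Taking the supremum over $\mu$ and invoking strong duality gives $\Mod_{p,\si}(\Ga)=\max_{\mu\in\cP(\Ga)}A(\mu)^{-p/q}=\bigl(\min_{\mu}A(\mu)\bigr)^{-p/q}$, where the minimum is attained because $\cP(\Ga)$ is compact (as $\Ga$ is finite) and $A$ is continuous; raising to the power $-1/p$ then produces precisely \eqref{eq:prob-interp}.

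Finally, for the identity characterizing an optimal $\mu^*$, I would read off the stationarity relation linking the primal and dual optimizers. At the optimum the inner minimization forces $p\si(e)\rho^*(e)^{p-1}=\eta^*(e)=t^*\,\bE_{\mu^*}[\cN(\underline{\ga},e)]$, where $\rho^*$ is the unique extremal density. Solving for the expected usage, using $p-1=p/q$, and substituting the value $t^*=p\,\Mod_{p,\si}(\Ga)$ recorded above (which follows since at the optimal $\mu^*$ one has $A(\mu^*)^{-p/q}=\Mod_{p,\si}(\Ga)$) yields $\bE_{\mu^*}[\cN(\underline{\ga},e)]=\si(e)\rho^*(e)^{p/q}/\Mod_{p,\si}(\Ga)$, as claimed. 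The one point requiring genuine care throughout is the justification of strong duality and of the attainment of the dual optimum; this is where I would appeal either to Slater's condition as above or to the strong-duality statement already recorded for \eqref{eq:mod-as-cvx}. It is the only non-mechanical step, as everything else reduces to the separable edgewise minimization followed by a single scalar optimization.
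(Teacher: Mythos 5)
Your proposal is correct and follows essentially the same route as the paper: form the Lagrangian dual of \eqref{eq:mod-as-cvx}, perform the separable edgewise minimization, decompose the multiplier as $\la=t\mu$ with $\mu\in\cP(\Ga)$, optimize the resulting scalar problem in $t$, and read off the expected-usage identity from the stationarity relation between $\rho^*$ and $\la^*$. The only (welcome) difference is that you justify strong duality explicitly via Slater's condition, where the paper simply asserts it and cites the dual problem from earlier work.
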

Theorem \ref{thm:prob-interp} is a consequence of the theory developed in~\cite{apc}.  However, since it was only remarked on in~\cite{apc}, we provide a detailed proof here.
\begin{proof}
  The optimization problem~\eqref{eq:mod-as-cvx} is a standard convex
  optimization problem.  Its Lagrangian dual problem, derived
  in~\cite{abppcw:ecgd2015}, is
  \begin{equation}\label{eq:dual-p}
    \begin{split}
      \text{maximize}\quad&
      \sum_{\gamma\in\Gamma}\lambda(\gamma) - (p-1)\sum_{e\in E}\sigma(e)\left(
        \frac{1}{p\sigma(e)}\sum_{\gamma\in\Gamma}\cN(\gamma,e)\lambda(\gamma)
      \right)^{\frac{p}{p-1}}\\
      \text{subject to}\quad& \lambda(\gamma)\ge 0\quad\forall\gamma\in\Gamma.
    \end{split}
  \end{equation}
  It can be readily verified that strong duality holds (i.e., that the
  minimum in~\eqref{eq:mod-as-cvx} equals the maximum
  in~\eqref{eq:dual-p}) and that both extrema are attained.  Moreover,
  if $\rho^*$ is the unique minimizer of the modulus problem and
  $\lambda^*$ is any maximizer of the Lagrangian dual, then the
  optimality conditions imply that
  \begin{equation}\label{eq:rho-lambda}
    \rho^*(e) = \left(\frac{1}{p\sigma(e)}\sum_{\gamma\in\Gamma}
      \cN(\gamma,e)\lambda^*(\gamma)\right)^{\frac{1}{p-1}}.
  \end{equation}
  By decomposing $\lambda\in\mathbb{R}_{\ge 0}^\Gamma$ as
  $\lambda=\nu\mu$ with $\nu\ge 0$ and $\mu\in\cP(\Gamma)$, we can
  rewrite~\eqref{eq:dual-p} as
  \begin{equation*}
    \max_{\nu\ge 0}\left\{
      \nu - (p-1)\left(\frac{\nu}{p}\right)^q\min_{\mu\in\cP(\Gamma)}
      \sum_{e\in E}\sigma(e)^{-\frac{q}{p}}
      \left(
        \sum_{\gamma\in\Gamma}\cN(\gamma,e)\mu(\gamma)
      \right)^q
    \right\}.
  \end{equation*}
  The minimum over $\mu$ can be recognized as the minimum
  in~\eqref{eq:prob-interp}.  Let $\alpha$ be its minimum value.  Then
  the maximum over $\nu\ge 0$ is attained at
  $\nu^* := p\alpha^{-\frac{p}{q}}$, and strong duality implies that
  \begin{equation*}
    \Mod_{p,\sigma}(\Gamma) = \nu^*-(p-1)\left(\frac{\nu^*}{p}\right)^q\alpha
    = \alpha^{-\frac{p}{q}}.
  \end{equation*}
  Thus,
  \begin{equation*}
    \min_{\mu\in\cP(\Gamma)}\sum_{e\in E}\sigma(e)^{-\frac{q}{p}}
    \bE_\mu\left[\cN(\underline{\gamma},e)\right]^q
    = \alpha = \Mod_{p,\sigma}(\Gamma)^{-\frac{q}{p}},
  \end{equation*}
  proving~\eqref{eq:prob-interp}.  The remainder of the theorem
  follows from~\eqref{eq:rho-lambda}:
  \begin{equation*}
    \rho^*(e) = \left(\frac{\nu^*}{p\sigma(e)}\sum_{\gamma\in\Gamma}
      \cN(\gamma,e)\mu^*(\gamma)\right)^{\frac{1}{p-1}}
    = \alpha^{-1}\sigma(e)^{-\frac{q}{p}}
    \bE_{\mu^*}\left[\cN(\underline{\gamma},e)\right]^{\frac{q}{p}}
  \end{equation*}
\end{proof}
\begin{remark}
The probabilistic interpretation is particularly informative when
$p=2$, $\sigma\equiv 1$, and $\Gamma$ is a
collection of subsets of $E$, so that $\cN$ is a $(0,1)$-matrix defined as
$\cN(\gamma,e)=\ones_\gamma(e)$.
In this case, this duality relation can be
expressed as
\begin{equation*}
  \Mod_2(\Gamma)^{-1} = \min_{\mu\in\cP(\Gamma)}\mathbb{E}_\mu\left|
    \underline{\gamma}\cap\underline{\gamma}'
  \right|,
\end{equation*}
where $\underline{\gamma}$ and $\underline{\gamma}'$ are two
independent random variables chosen according to the pmf $\mu$, and
$\left| \underline{\gamma}\cap\underline{\gamma}' \right|$ is their
\emph{overlap} (also a random variable).  In other words, computing
the $2$-modulus in this setting is equivalent to finding a pmf that
minimizes the expected overlap of two iid $\Gamma$-valued random
variables.
\end{remark}
In the present work, we are interested in a different but closely
related duality called \emph{blocking duality}.

\section{Blocking Duality and $p$-Modulus}\label{sec:fulkerson}

In this section, we introduce blocking duality for modulus.  If $\Ga$ is a finite non-trivial family of objects on a graph $G$, the admissible set $\Adm(\Ga)$, defined in~\eqref{eq:Adm}, is determined by finitely many inequalities:
\begin{equation*}
  \sum_{e\in E}\cN(\ga,e)\rho(e) \ge 1\quad\forall\ga\in\Ga.
\end{equation*}
Thus, it is possible to identify $\Ga$ with the rows of its edge usage matrix $\cN$ or, equivalently, with the corresponding points in $\R^E_{\ge 0}$.

\subsection{Fulkerson's theorem}

First, we recall some general definitions. Let $\cK$ be the set of all closed convex sets $K\subset\R_{\ge 0}^E$ that are {\it recessive}, in the sense that $K+\mathbb{R}_{\ge 0}^E=K$.  To
avoid trivial cases, we shall assume that
$\varnothing\subsetneq K\subsetneq\mathbb{R}_{\ge 0}^E$, for $K\in\cK$. 
\begin{definition}
For each $K\in\cK$ there is an associated \emph{blocking
  polyhedron}, or \emph{blocker},
\begin{equation*}
  \BL(K) := \left\{\eta\in\mathbb{R}_{\ge 0}^E : 
    \eta^T\rho\ge 1,\;\;\forall\rho\in K \right\}.
\end{equation*}
\end{definition}

\begin{definition}
Given $K\in\cK$ and a point $x\in K$ we say that $x$ is an {\it extreme point} of $K$ if
$x=tx_1+(1-t)x_2$ for some $x_1,x_2\in K$ and some $t\in (0,1)$, implies that $x_1=x_2=x$.
Moreover, we let ${\rm ext}(K)$  be the set of all extreme points of $K$ .
\end{definition}

\begin{definition} 
The {\it dominant} of a set $P\subset\R_{\ge 0}^E$ is the recessive closed convex set
\[
\Dom(P)=\co(P)+\R_{\ge 0}^E,
\]
where $\co(P)$ is the convex hull of $P$.
\end{definition}

When $\Ga$ is finite, $\Adm(\Ga)$ has finitely many faces. However, $\Adm(\Ga)$ is also determined by its finitely many extreme points, or ``vertices'' in $\R^{E}_{\ge 0}$. 
In fact, since $\Adm(\Ga)$ is a recessive closed convex set, it equals the dominant of its extreme points ${\rm ext}(\Adm(\Ga))$, see \cite[Theorem~18.5]{Rockafellar1970}.  In the present notations,
\begin{equation}\label{eq:caratheodory}
  \Adm(\Ga) = \Dom(\ext(\Adm(\Ga))).
\end{equation}

\begin{definition}\label{def:fulkerson}
Suppose $G=(V,E)$ is a finite graph and $\Ga$ is a finite non-trivial family of objects on $G$. We say that the family
$$\hat{\Ga}:={\rm ext}(\Adm(\Ga))=\{\hat{\ga}_1,\dots,\hat{\ga}_s\}\subset\R_{\ge 0}^E,$$ consisting  of the extreme points of $\Adm(\Ga)$, is the {\it  Fulkerson blocker} of $\Ga$. We define the matrix
$\hat{\cN}\in\mathbb{R}_{\ge 0}^{\hat{\Ga}\times E}$ to be the matrix whose rows
are the vectors $\hat{\ga}^T$, for $\hat{\ga}\in\hat{\Ga}$. 
\end{definition}

\begin{theorem}[Fulkerson~\cite{Fulkerson1968}]
  \label{thm:fulkerson}
Let $G=(V,E)$ be a graph and let $\Ga$ be a non-trivial finite family of objects on $G$. 
Let $\hat{\Ga}$ be the Fulkerson blocker of $\Ga$. Then
\begin{itemize}
\item[(1)] $\Adm(\Ga)=\Dom(\hat{\Ga})=\BL(\Adm(\hat{\Ga}))$;
\item[(2)] $\Adm(\hat{\Ga})=\Dom(\Ga)=\BL(\Adm(\Ga))$;
\item[(3)] $\hat{\hat{\Ga}}\subset \Ga$. 
\end{itemize}
\end{theorem}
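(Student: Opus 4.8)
The plan is to reduce everything to two general facts about recessive closed convex sets and then assemble the three claims by bookkeeping, using the decomposition~\eqref{eq:caratheodory} that is already in hand. Throughout I identify each object $\ga\in\Ga$ with its usage vector (a row of $\cN$), so that $\Adm(\Ga)=\{\rho\in\R_{\ge 0}^E:\ga^T\rho\ge 1\ \forall\ga\in\Ga\}$, and I write $\Adm(P)$ for the analogous set attached to any finite $P\subset\R_{\ge 0}^E$.

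First I would record the elementary \emph{blocker-of-dominant identity}: for finite $P$, $\BL(\Dom(P))=\Adm(P)$. This is immediate because, for $\eta\ge 0$, the linear functional $x\mapsto\eta^Tx$ attains its infimum over $\Dom(P)=\co(P)+\R_{\ge 0}^E$ at a generator of $\co(P)$ (adding a nonnegative vector only increases it), so the condition $\eta^Tx\ge 1$ for all $x\in\Dom(P)$ is equivalent to $\eta^Tp\ge 1$ for all $p\in P$. Applying this with $P=\hat{\Ga}$ and with $P=\Ga$ will give, respectively, $\BL(\Adm(\Ga))=\BL(\Dom(\hat{\Ga}))=\Adm(\hat{\Ga})$ (using~\eqref{eq:caratheodory}) and $\BL(\Dom(\Ga))=\Adm(\Ga)$.

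The substantive ingredient is the \emph{blocking involution}: $\BL(\BL(K))=K$ for every $K\in\cK$. The inclusion $K\subseteq\BL(\BL(K))$ is a one-line unwinding of the definitions. For the reverse inclusion I would argue by strict separation: given $\rho_0\in\R_{\ge 0}^E\setminus K$, since $K$ is closed and convex there is a hyperplane with $\eta^T\rho_0<c\le\inf_{\rho\in K}\eta^T\rho=:c_0$; recessiveness of $K$ forces $\eta\ge 0$ (a negative coordinate would drive the infimum to $-\infty$ along the corresponding recessive ray), and then $\eta\ge 0,\ \rho_0\ge 0$ give $0\le\eta^T\rho_0<c_0$, so $c_0>0$ and $c_0<\infty$ since $K\neq\varnothing$. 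Rescaling $\eta$ by $c_0^{-1}$ produces $\tilde\eta\in\BL(K)$ with $\tilde\eta^T\rho_0<1$, witnessing $\rho_0\notin\BL(\BL(K))$. This separation step, together with the normalization checks ($\eta\ge 0$ and $c_0>0$), is where essentially all the work lies and is the main obstacle; it is exactly the content attributed to Fulkerson~\cite{Fulkerson1968}.

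With these two facts the three parts follow formally. For (1): \eqref{eq:caratheodory} gives $\Adm(\Ga)=\Dom(\hat{\Ga})$, the blocker-of-dominant identity gives $\BL(\Adm(\Ga))=\Adm(\hat{\Ga})$, and the involution applied to $K=\Adm(\Ga)$ gives $\BL(\Adm(\hat{\Ga}))=\Adm(\Ga)=\Dom(\hat{\Ga})$. For (2): the blocker-of-dominant identity gives $\Adm(\Ga)=\BL(\Dom(\Ga))$, so the involution applied to $\Dom(\Ga)\in\cK$ yields $\BL(\Adm(\Ga))=\Dom(\Ga)$; combined with $\BL(\Adm(\Ga))=\Adm(\hat{\Ga})$ from (1) this gives $\Adm(\hat{\Ga})=\Dom(\Ga)$. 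For (3): by (2), $\hat{\hat{\Ga}}=\ext(\Adm(\hat{\Ga}))=\ext(\Dom(\Ga))$, and one checks directly that an extreme point of $\Dom(\Ga)=\co(\Ga)+\R_{\ge 0}^E$ can carry no slack in the recessive direction (writing $x=c+r$ with $c\in\co(\Ga)$, $r\ge 0$, extremeness forces $r=0$), hence lies in $\co(\Ga)$ and is in fact an extreme point of $\co(\Ga)$, which for a finite generating set must belong to $\Ga$; thus $\hat{\hat{\Ga}}\subset\Ga$. Finally I would verify the running hypotheses that $\Adm(\Ga)$ and $\Dom(\Ga)$ genuinely lie in $\cK$ (nonempty, proper, recessive, closed), the only nonformal point being that non-triviality of $\Ga$ keeps $\zero$ out of each set, so both are proper subsets of $\R_{\ge 0}^E$.
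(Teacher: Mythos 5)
Your argument is correct, and the bookkeeping in parts (1)--(3) is sound, but the engine driving it differs from the paper's. Both proofs dispose of the easy identity $\BL(\Dom(P))=\Adm(P)$ in the same way (a nonnegative linear functional over $\co(P)+\R_{\ge 0}^E$ is minimized at a generator of $\co(P)$), and both obtain $\BL(\Adm(\Ga))=\Adm(\hat{\Ga})$ from the decomposition $\Adm(\Ga)=\co(\hat{\Ga})+\R_{\ge 0}^E$. The divergence is in how the remaining equality $\BL(\Adm(\Ga))=\Dom(\Ga)$ is reached. The paper characterizes membership of $\eta$ in $\BL(\Adm(\Ga))$ as the statement that the linear program $\min\{\eta^T\rho : \cN\rho\ge\one,\ \rho\ge 0\}$ has value at least $1$, passes to the Lagrangian dual, and reads off from strong LP duality that this holds exactly when $\eta\ge s\,\cN^T\nu$ for some $s\ge 1$ and $\nu\in\cP(\Ga)$, i.e.\ $\eta\in\Dom(\Ga)$. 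You instead prove the involution $\BL(\BL(K))=K$ for every $K\in\cK$ by strict separation, with the two normalization checks (recessiveness forces the separating normal to be nonnegative; nonnegativity of the excluded point forces the threshold to be positive, so the normal can be rescaled into $\BL(K)$) carrying the weight, and then push the blocker-of-dominant identity through this involution. The two are of course cousins---LP duality is separation in disguise---but yours is more self-contained, avoiding strong duality as a black box, and it establishes the clean general statement $\BL(\BL(K))=K$ up front, which the paper only records afterwards as Corollary \ref{cor:admbldom}; the paper's LP route has the advantage of producing the explicit certificate $\eta\ge s\,\cN^T\nu$, which makes $\Dom(\Ga)$ appear constructively rather than via a bipolar argument. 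Your verification of the standing hypotheses ($\Adm(\Ga),\Dom(\Ga)\in\cK$, with non-triviality keeping $\zero$ out of each) and your direct check that an extreme point of $\Dom(\Ga)$ can carry no recessive slack and hence lies in $\Ga$ are exactly what is needed and are complete.
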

In words, (3) says that the extreme points of $\Adm(\hat{\Ga})$ are a subset of $\Ga$. 
Combining  (1) and (2) we get the following relationships in terms of $\Ga$ alone.
\begin{corollary}\label{cor:admbldom}
Let $G=(V,E)$ be a graph and let $\Ga$ be a nontrivial finite family of objects on $G$. Then,
\[
\BL(\BL(\Adm(\Ga)))=\Adm(\Ga)\qquad\text{and}\qquad \BL(\BL(\Dom(\Ga)))=\Dom(\Ga).
\]
as well as
\[
\Adm(\Ga)=\BL\left(\Dom(\Ga)\right)\qquad\text{and}\qquad \BL(\Adm(\Ga))=\Dom(\Ga).
\]
\end{corollary}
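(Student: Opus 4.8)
The plan is to treat this entirely as formal symbol-pushing from parts~(1) and~(2) of Theorem~\ref{thm:fulkerson}; no new convex geometry is required, since both $\Adm(\Ga)$ and $\Dom(\Ga)$ are recessive closed convex sets in $\cK$, so every blocker appearing below is well-defined. There are four identities to verify, and I would establish them in an order that lets each one feed the next: first $\BL(\Adm(\Ga))=\Dom(\Ga)$, then $\BL(\BL(\Adm(\Ga)))=\Adm(\Ga)$, then $\Adm(\Ga)=\BL(\Dom(\Ga))$, and finally $\BL(\BL(\Dom(\Ga)))=\Dom(\Ga)$.

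First I would read off $\BL(\Adm(\Ga))=\Dom(\Ga)$ directly, since it is the equality $\Dom(\Ga)=\BL(\Adm(\Ga))$ contained in part~(2), so this case requires no work. Next, to get $\BL(\BL(\Adm(\Ga)))=\Adm(\Ga)$, I would chain the two halves of the theorem through the Fulkerson blocker $\hat{\Ga}$: part~(2) supplies $\Adm(\hat{\Ga})=\BL(\Adm(\Ga))$, and part~(1) supplies $\Adm(\Ga)=\BL(\Adm(\hat{\Ga}))$; substituting the former into the latter yields $\Adm(\Ga)=\BL(\BL(\Adm(\Ga)))$. For the third identity I would substitute the already-established $\Dom(\Ga)=\BL(\Adm(\Ga))$ into $\BL(\Dom(\Ga))$ to obtain $\BL(\Dom(\Ga))=\BL(\BL(\Adm(\Ga)))$, which equals $\Adm(\Ga)$ by the identity just proved. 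Finally, applying $\BL$ to the relation $\Adm(\Ga)=\BL(\Dom(\Ga))$ and invoking $\BL(\Adm(\Ga))=\Dom(\Ga)$ once more gives $\BL(\BL(\Dom(\Ga)))=\BL(\Adm(\Ga))=\Dom(\Ga)$, closing the last case.

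There is no real obstacle here: the entire content lives in Theorem~\ref{thm:fulkerson} itself, and the corollary merely repackages the involutive nature of the blocker operation on the two specific sets $\Adm(\Ga)$ and $\Dom(\Ga)$. The only point demanding a little care is the bookkeeping --- making sure that at each step I quote the correct half of~(1) or~(2), and that the set passed to $\BL$ is one of the four ($\Adm(\Ga)$, $\Dom(\Ga)$, $\Adm(\hat{\Ga})$, $\BL(\Adm(\Ga))$) for which the theorem already supplies an identity. Once the first two identities are in hand, the remaining two each follow from a single application of $\BL$, so the derivation is short.
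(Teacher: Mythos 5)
Your proposal is correct and is exactly the derivation the paper intends: the corollary is stated as an immediate consequence of combining parts (1) and (2) of Theorem~\ref{thm:fulkerson} through the intermediary $\hat{\Ga}$, which is precisely your chain $\Dom(\Ga)=\BL(\Adm(\Ga))=\Adm(\hat\Ga)$ followed by $\Adm(\Ga)=\BL(\Adm(\hat\Ga))$ and then formal substitution. The bookkeeping order you chose (first the two identities taken directly from the theorem, then the two double-blocker identities by substitution) is a clean way to organize what the paper leaves implicit.
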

We include a proof of Theorem \ref{thm:fulkerson} for the reader's convenience.
\begin{proof}
We first prove (2).
Suppose $\eta\in\BL(\Adm(\Ga))$. Then $\eta^T\rho\ge 1$, for every $\rho\in\Adm(\Ga)$. In particular, since every row of $\hat{\cN}$ is an extreme point of $\Adm(\Ga)$, we have
\begin{equation}\label{eq:vertices}
\hat{\cN}\eta\ge 1.
\end{equation}
In other words, $\eta \in\Adm(\hat{\Ga})$.
Conversely, suppose $\eta \in\Adm(\hat{\Ga})$, that is (\ref{eq:vertices}) holds. Since
\[
\Adm(\Ga)={\rm co}(\hat{\Ga})+\R_{\ge 0}^E,
\]
for every $\rho\in\Adm(\Ga)$,
there is a probability measure $\nu\in\cP(\hat{\Ga})$ and a vector $z\ge 0$ such that
\[
\rho = \hat{\cN}^T\nu + z
\]
And by (\ref{eq:vertices}),
\[
\eta^T\rho=\eta^T\hat{\cN}^T\nu + \eta^Tz
\ge \nu^T1 + \eta^Tz \ge 1.
\]
So $\eta\in\BL(\Adm(\Ga))$.

Note that $\eta\in\BL(\Adm(\Ga))$ if and only if the value of the following linear program is greater or equal $1$.
\begin{equation}
  \label{eq:ahat}
  \begin{split}
    \text{minimize}\qquad &\eta^T\rho\\
    \text{subject to}\qquad & \cN \rho\geq \one,\ \rho\ge 0,
  \end{split}
\end{equation}
where $\cN$ is the usage matrix for $\Ga$.
The Lagrangian for this problem is
\[
\cL(\rho,\la,t):= \eta^T\rho +\la^T(\one-\cN\la)-t^T\rho=\la^T\one+\rho^T(\eta-\cN^T\la-t),
\]
with $\rho\in\R^E$, $\la\in\R_{\ge 0}^\Ga$ and $t\in \R_{\ge 0}^E$. In particular, the dual problem is
\begin{equation}
  \label{eq:ahat-dual}
  \begin{split}
    \text{maximize}\qquad &\la^T\one\\
    \text{subject to}\qquad & \cN^T \la\leq \eta,\ \la\ge 0.
  \end{split}
\end{equation}
Splitting $\la=s\nu$, with $s\ge 0$ and $\nu\in\cP(\Ga)$, we can rewrite this problem as
\begin{equation}
  \label{eq:ahat-dual-prob}
  \begin{split}
    \text{maximize}\qquad &s\\
    \text{subject to}\qquad & s\cN^T \nu\leq \eta,\  \nu\in\cP(\Ga).
  \end{split}
\end{equation}
By strong duality, $\eta\in\BL(\Adm(\Ga))$ if and only if there is $s\ge 1$ and $\nu\in\cP(\Ga)$ so that 
\[
\eta\ge s\cN^T\nu.
\]
Namely, $\eta\in\BL(\Adm(\Ga))$ implies that $\eta\ge\cN^T\nu$,
so $\eta\in \Dom(\Ga)$. 

Conversely, if $\eta\in\Dom(\Ga)$, then there is a $\nu\in\cP(\Ga)$ such that $\eta\ge\cN^T\nu$. So we have proved (2).
In particular, since $\hat{\hat{\Ga}}$ is the set of extreme points of $\Adm(\hat{\Ga})$ by Definition~\ref{def:fulkerson}, it follows from (2) that
\begin{equation*}
  \hat{\hat{\Ga}} = \ext(\Adm(\hat{\Ga})) = \ext(\Dom(\Ga)).
\end{equation*}
Since any extreme point of $\Dom(\Ga)$ must be present in $\Ga$, we conclude that $\hat{\hat{\Ga}}\subset\Ga$, and hence (3) is proved as well.

To prove (1), we apply (2) to $\hat{\Ga}$ and find that
\[
\BL(\Adm(\hat{\Ga}))=\Adm(\hat{\hat{\Ga}})\supset \Adm(\Ga),
\]
where the last inclusion follows from (3), since $\hat{\hat{\Ga}}\subset \Ga$.  Also, by (3) applied to $\hat{\Ga}$, the extreme points of $\Adm(\hat{\hat{\Ga}})$ are a subset of $\hat{\Ga}$ and therefore they are a subset  of $\ext(\Adm(\Ga))$. This implies that $\Adm(\hat{\hat{\Ga}})\subset\Adm(\Ga)$.
So we have $\BL(\Adm(\hat{\Ga}))= \Adm(\Ga)$.

Moreover, by (2) applied to $\hat{\Ga}$, we get that
\[
\BL(\Adm(\hat{\Ga}))=\Dom(\hat{\Ga}).
\]
So (1) is proved as well.
\end{proof}

\subsection{Blocking duality for $p$-modulus}

\begin{theorem}
  \label{thm:duality}
  Let $G=(V,E)$ be a graph and let $\Ga$ be a
  nontrivial finite family of objects on $G$ with Fulkerson blocker $\hat{\Ga}$. Let the exponent $1<p<\infty$ be given,
  with $q:=p/(p-1)$ its H\"{o}lder conjugate exponent. For any set of weights
  $\sigma\in\mathbb{R}_{>0}^E$ define the dual set of weights
  $\hat{\sigma}$ as $\hat{\sigma}(e) := \sigma(e)^{-\frac{q}{p}}$, for all $e\in E$.
 
  Then
  \begin{equation}
    \label{eq:duality}
    \Mod_{p,\sigma}(\Ga)^{\frac{1}{p}}\Mod_{q,\hat{\sigma}}(\hat{\Ga})^{\frac{1}{q}} = 1.
  \end{equation}
  Moreover, the optimal $\rho^*\in\Adm(\Ga)$ and $\eta^*\in\Adm(\hat{\Ga})$ are unique and are related as follows:
  \begin{equation}\label{eq:rhostaretastar}
   \eta^*(e) =  \frac{\sigma(e)\rho^*(e)^{p-1}}{\Mod_{p,\si}(\Ga)}\qquad\forall e\in E.
  \end{equation}
\end{theorem}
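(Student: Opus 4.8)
The plan is to write $M:=\Mod_{p,\si}(\Ga)$ and let $\rho^*$ be the extremal density for $\Mod_{p,\si}(\Ga)$, which exists and is unique by strict convexity of $\cE_{p,\si}$ since $1<p<\infty$ (Remark~\ref{rem:remarks}(b)); since $1<p<\infty$ forces $1<q<\infty$, the $q$-modulus problem for $\hat{\Ga}$ likewise has a unique extremal density. I would prove~\eqref{eq:duality} by establishing the two inequalities $M^{1/p}\Mod_{q,\hat{\si}}(\hat{\Ga})^{1/q}\ge 1$ and $\le 1$ separately, and read off the optimal $\eta^*$ and the relation~\eqref{eq:rhostaretastar} along the way. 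The crucial structural input is Fulkerson's theorem (Theorem~\ref{thm:fulkerson}), which identifies $\Adm(\hat{\Ga})=\BL(\Adm(\Ga))=\Dom(\Ga)$.

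For the lower bound I would take an arbitrary $\eta\in\Adm(\hat{\Ga})$. By Theorem~\ref{thm:fulkerson}(2), $\Adm(\hat{\Ga})=\BL(\Adm(\Ga))$, so $\eta^T\rho\ge 1$ for every $\rho\in\Adm(\Ga)$; in particular $\eta^T\rho^*\ge 1$. The key step is Hölder's inequality with exponents $q$ and $p$ applied to the splitting $\eta(e)\rho^*(e)=\bigl(\hat{\si}(e)^{1/q}\eta(e)\bigr)\bigl(\hat{\si}(e)^{-1/q}\rho^*(e)\bigr)$, using the identity $\hat{\si}(e)^{-p/q}=\si(e)$ that follows at once from $\hat{\si}(e)=\si(e)^{-q/p}$. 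This yields
\[
1\le\eta^T\rho^*\le\cE_{q,\hat{\si}}(\eta)^{1/q}\,\cE_{p,\si}(\rho^*)^{1/p}=\cE_{q,\hat{\si}}(\eta)^{1/q}M^{1/p}.
\]
Taking the infimum over $\eta\in\Adm(\hat{\Ga})$ gives $\Mod_{q,\hat{\si}}(\hat{\Ga})^{1/q}\ge M^{-1/p}$, i.e.\ $M^{1/p}\Mod_{q,\hat{\si}}(\hat{\Ga})^{1/q}\ge 1$.

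For the reverse inequality I would exhibit an admissible density attaining this bound, namely the $\eta^*$ of~\eqref{eq:rhostaretastar}. Here the probabilistic interpretation (Theorem~\ref{thm:prob-interp}) does the work: since $p/q=p-1$, its optimality formula shows that for the optimal pmf $\mu^*$ one has $\eta^*(e)=\bE_{\mu^*}\left[\cN(\underline{\ga},e)\right]=(\cN^T\mu^*)(e)$. As a convex combination of the rows of $\cN$, this vector lies in $\co(\Ga)\subset\Dom(\Ga)=\Adm(\hat{\Ga})$ by Theorem~\ref{thm:fulkerson}(2), so $\eta^*$ is genuinely admissible for $\hat{\Ga}$. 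A direct energy computation, using $(p-1)q=p$ and $\hat{\si}(e)\si(e)^q=\si(e)$, gives
\[
\cE_{q,\hat{\si}}(\eta^*)=M^{-q}\sum_{e\in E}\si(e)\rho^*(e)^p=M^{-q}\cE_{p,\si}(\rho^*)=M^{1-q},
\]
whence $\Mod_{q,\hat{\si}}(\hat{\Ga})^{1/q}\le M^{(1-q)/q}=M^{-1/p}$ and the product is $\le 1$. Combining the two bounds proves~\eqref{eq:duality}. Finally, since $\eta^*$ is admissible and attains the value $\Mod_{q,\hat{\si}}(\hat{\Ga})=M^{1-q}$, uniqueness of the $q$-modulus minimizer identifies it as the extremal density, establishing both its uniqueness and the relation~\eqref{eq:rhostaretastar}.

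The step I expect to be the main obstacle is the admissibility of $\eta^*$: unlike the Hölder bound, it is not formal manipulation but relies on recognizing $\eta^*$ as the expected-usage vector $\cN^T\mu^*$ through Theorem~\ref{thm:prob-interp} and then invoking Fulkerson's identification $\Adm(\hat{\Ga})=\Dom(\Ga)$ to place it in the admissible set. Once this identification is in hand, the remaining exponent bookkeeping ($p/q=p-1$, $(p-1)q=p$, and $\hat{\si}^{-p/q}=\si$) is entirely routine.
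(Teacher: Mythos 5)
Your proof is correct, but the crucial step is carried out by a genuinely different route than the paper's. Both arguments share the H\"older lower bound $\Mod_{p,\sigma}(\Ga)^{1/p}\Mod_{q,\hat{\sigma}}(\hat{\Ga})^{1/q}\ge 1$ (via $\Adm(\hat{\Ga})=\BL(\Adm(\Ga))$), but they differ in how equality is attained. The paper works from the dual side: it takes the minimizer $\eta^*$ of $\Mod_{q,\hat{\sigma}}(\hat{\Ga})$, \emph{defines} $\rho^*(e):=\alpha\hat{\sigma}(e)\eta^*(e)^{q/p}$, and proves $\rho^*\in\Adm(\Ga)=\BL(\Adm(\hat{\Ga}))$ by a first-order variational argument (a Taylor expansion of $\cE_{q,\hat{\sigma}}$ along segments $(1-\theta)\eta^*+\theta\eta$), which is self-contained convexity and never invokes Theorem~\ref{thm:prob-interp}. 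You work from the primal side: you define $\eta^*$ by~\eqref{eq:rhostaretastar} from the primal minimizer $\rho^*$, identify it as $\cN^T\mu^*=\bE_{\mu^*}[\cN(\underline{\gamma},\cdot)]$ via the Lagrangian/probabilistic duality of Theorem~\ref{thm:prob-interp} (the exponent check $p/q=p-1$ is right), and then place it in $\co(\Ga)\subset\Dom(\Ga)=\Adm(\hat{\Ga})$ by Theorem~\ref{thm:fulkerson}(2); there is no circularity since Theorem~\ref{thm:prob-interp} is proved independently of blocking duality. Your route buys a shorter admissibility argument and, as a by-product, essentially proves the identity $\eta^*=\bE_{\mu^*}[\cN(\underline{\gamma},\cdot)]$ that the paper defers to Theorem~\ref{thm:probint}; the paper's route buys independence from the Lagrangian machinery, keeping the blocking-duality theorem purely a consequence of Fulkerson's theorem plus elementary convexity. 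The remaining bookkeeping ($\hat{\sigma}\sigma^q=\sigma$, $(p-1)q=p$, $(1-q)/q=-1/p$) and the uniqueness conclusion via strict convexity for $1<q<\infty$ are all correct.
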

\begin{remark}
  The case for $p=2$, namely 
   \begin{equation*}
    \Mod_{2,\si}(\Ga)\Mod_{2,\sigma^{-1}}(\hat{\Ga})= 1,
  \end{equation*}
  is essentially contained in
  \cite[Lemma 2]{lovasz:jcta2001}, although stated with different terminology and with a different proof.
  In this case~\eqref{eq:rhostaretastar} can be rewritten as
  \begin{equation*}
   \si(e)\rho^*(e) =  \Mod_{2,\si}(\Ga)\eta^*(e)\qquad\forall e\in E.
  \end{equation*}
\end{remark}
\begin{proof}
  For all $\rho\in\Adm(\Ga)$ and $\eta\in\Adm(\hat{\Ga})$, H\"{o}lder's
  inequality implies that
  \begin{equation}
    \label{eq:holder}
    \begin{split}
      1 \le \sum_{e\in E}\rho(e)\eta(e) &= \sum_{e\in
        E}\left(\sigma(e)^{1/p}\rho(e)\right)
      \left(\sigma(e)^{-1/p}\eta(e)\right) \\
      &\le \left( \sum_{e\in E}\sigma(e)\rho(e)^p \right)^{1/p} \left(
        \sum_{e\in E}\hat{\sigma}(e)\eta(e)^{q} \right)^{1/q},
    \end{split}
  \end{equation}
  so
  \begin{equation}
    \label{eq:inequality}
    \Mod_{p,\sigma}(\Ga)^{1/p}\Mod_{q,\hat{\sigma}}(\hat{\Ga})^{1/q} \ge 1.
  \end{equation}

  Now, let $\alpha := \Mod_{q,\hat{\sigma}}(\hat{\Ga})^{-1}$ and let
  $\eta^*\in\Adm(\hat{\Ga})$ be the minimizer for $\Mod_{q,\hat{\sigma}}(\hat{\Ga})$.
  Then~\eqref{eq:inequality} implies that
  \begin{equation}
    \label{eq:lower-bound}
    \Mod_{p,\sigma}(\Ga) \ge \alpha^{\frac{p}{q}} = \alpha^{\frac{1}{q-1}}.
  \end{equation}
  Define
  \begin{equation}\label{eq:rhostaretastar2}
    \rho^*(e) := \alpha\left(
      \frac{\hat{\sigma}(e)}{\sigma(e)}\eta^*(e)^{q}
    \right)^{1/p}
    = \alpha\hat{\sigma}(e)\eta^*(e)^{q/p}.
  \end{equation}
  Note that
  \begin{equation*}
    \cE_{p,\sigma}(\rho^*) = \sum_{e\in E}\sigma(e)\rho^*(e)^p
    = \alpha^p\sum_{e\in E}\hat{\sigma}(e)\eta^*(e)^{q} = 
    \alpha^{p-1} = \alpha^{\frac{1}{q-1}}.
  \end{equation*}
  Thus, if we can show that $\rho^*\in\Adm(\Ga)$,
  then~\eqref{eq:lower-bound} is attained and $\rho^*$ must be
  extremal for $\Mod_{p,\sigma}(\Ga)$. In particular, (\ref{eq:duality})  would  follow. Moreover, (\ref{eq:rhostaretastar}) is another way of writing (\ref{eq:rhostaretastar2}).

  To see that $\rho^*\in\Adm(\Ga)$, we will verify that
  $\sum_{e\in E}\rho^*(e)\eta(e)\ge 1$ for all $\eta\in\Adm(\hat{\Ga})$.
  First, consider $\eta=\eta^*$.  In this case
  \begin{equation*}
    \sum_{e\in E}\rho^*(e)\eta^*(e) = \alpha\sum_{e\in E}\hat{\sigma}(e)\eta^*(e)^q = 1.
  \end{equation*}
  Now let $\eta\in\Adm(\hat{\Ga})$ be arbitrary.  Since $\Adm(\hat{\Ga})$ is
  convex, we have that $(1-\theta)\eta^* + \theta\eta\in\Adm(\hat{\Ga})$
  for all $\theta\in[0,1]$.  So, using Taylor's theorem, we have
  \begin{equation*}
    \begin{split}
      \alpha^{-1} &= \cE_{q,\hat{\sigma}}(\eta^*) \le
      \cE_{q,\hat{\sigma}}((1-\theta)\eta^* + \theta\eta) = \sum_{e\in
        E}\hat{\sigma}(e)\left[(1-\theta)\eta^*(e) +
        \theta\eta(e)\right]^{q} \\
      &= \alpha^{-1} + q\theta\sum_{e\in E}\hat{\sigma}(e)\eta^*(e)^{q-1}
      \left(\eta(e)-\eta^*(e)\right) + O(\theta^2)\\
      &= \alpha^{-1} + \alpha^{-1}q\theta\sum_{e\in E}\rho^*(e)
      \left(\eta(e)-\eta^*(e)\right) + O(\theta^2).
    \end{split}
  \end{equation*}
  Since this inequality must hold for arbitrarily small $\theta>0$, it
  follows that
  \begin{equation*}
    \sum_{e\in E}\rho^*(e)\eta(e) \ge \sum_{e\in E}\rho^*(e)\eta^*(e) = 1,
  \end{equation*}
  and the proof is complete.
\end{proof}

\subsection{The cases $p=1$ and $p=\infty$}

Now we turn our attention to establishing the duality relationship in
the cases $p=1$ and $p=\infty$.  Recall that by Theorem \ref{thm:generalize},
\begin{equation*}
  \lim_{p\to\infty}\Mod_{p,\sigma}(\Ga)^{\frac{1}{p}} = \Mod_{\infty,1}(\Ga) 
  = \frac{1}{\ell(\Ga)},
\end{equation*}
where $\ell(\Ga)$ is defined to be the smallest element of the vector
$\cN\one$.

In order to pass to the limit in~\eqref{eq:duality}, we need to
establish the limits for the second term in the left-hand side
product.
\begin{lemma}
  Under the assumptions of Theorem~\ref{thm:duality},
  \begin{equation}
    \begin{split}
      \lim\limits_{q\to 1}\Mod_{q,
        \hat{\sigma}}(\hat{\Ga})^{\frac{1}{q}} &= \Mod_{1,
        1}(\hat{\Ga})\quad\text{and}\\
      \lim\limits_{q\to \infty}\Mod_{q,
        \hat{\sigma}}(\hat{\Ga})^{\frac{1}{q}} &= \Mod_{\infty,
        \sigma^{-1}}(\hat{\Ga}),
    \end{split}
  \end{equation}
  where $\sigma^{-1}(e)=\sigma(e)^{-1}$.
\end{lemma}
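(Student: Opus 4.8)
The plan is to prove each limit by a squeeze argument that decouples the exponent from the weights. The essential difficulty is that the dual weight $\hat{\sigma}(e)=\sigma(e)^{-(q-1)}$ itself depends on $q$, so one cannot simply invoke the continuity of $q\mapsto\Mod_{q,w}(\hat{\Ga})$ for a \emph{fixed} weight $w$ as in Theorem~\ref{thm:generalize}. Instead, for every $\eta\in\Adm(\hat{\Ga})$ I would establish two-sided bounds comparing the rescaled energy $\cE_{q,\hat{\sigma}}(\eta)^{1/q}$ to the relevant limiting energy, with multiplicative constants that are \emph{uniform in $\eta$} and tend to $1$. Passing to the infimum over $\Adm(\hat{\Ga})$ then transfers the bounds to the moduli, since $c_-\inf_\eta g(\eta)\le\inf_\eta f(\eta)\le c_+\inf_\eta g(\eta)$ whenever $c_-g\le f\le c_+g$ pointwise with constants $c_\pm>0$; the squeeze then gives the result. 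A pleasant feature is that this requires no information about the existence or location of minimizers.

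For $q\to\infty$, I would write $\cE_{q,\hat{\sigma}}(\eta)=\sum_{e}\sigma(e)\,a_e^{\,q}$ with $a_e:=\sigma(e)^{-1}\eta(e)$, so that $\max_e a_e=\cE_{\infty,\sigma^{-1}}(\eta)$. Bounding the sum above by $\sigma(E):=\sum_e\sigma(e)$ times $(\max_e a_e)^q$ and below by its single largest term yields
\[
\left(\min_e\sigma(e)\right)^{1/q}\cE_{\infty,\sigma^{-1}}(\eta)\le\cE_{q,\hat{\sigma}}(\eta)^{1/q}\le\sigma(E)^{1/q}\,\cE_{\infty,\sigma^{-1}}(\eta).
\]
Both constants tend to $1$ as $q\to\infty$, so the infimum over $\Adm(\hat{\Ga})$ and the squeeze give $\Mod_{q,\hat{\sigma}}(\hat{\Ga})^{1/q}\to\Mod_{\infty,\sigma^{-1}}(\hat{\Ga})$.

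For $q\to1$, I would factor $\cE_{q,\hat{\sigma}}(\eta)=\sum_e b_e^{\,q}$ with $b_e:=\hat{\sigma}(e)^{1/q}\eta(e)$. The elementary comparisons $\lvert E\rvert^{-(1-1/q)}\sum_e b_e\le(\sum_e b_e^{\,q})^{1/q}\le\sum_e b_e$ (the right inequality being $\ell^q\le\ell^1$, the left one H\"older against the constant vector) reduce matters to $\sum_e b_e=\sum_e\sigma(e)^{-(q-1)/q}\eta(e)$. Since $\sigma(e)^{-(q-1)/q}\in[e^{-A(q-1)/q},e^{A(q-1)/q}]$ with $A:=\max_e\lvert\ln\sigma(e)\rvert$, one again gets constants multiplying $\cE_{1,1}(\eta)=\sum_e\eta(e)$ that tend to $1$ as $q\to1$. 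The infimum and squeeze then give $\Mod_{q,\hat{\sigma}}(\hat{\Ga})^{1/q}\to\Mod_{1,1}(\hat{\Ga})$.

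The main obstacle is exactly the $q$-dependence of $\hat{\sigma}$, and the device that resolves it is to absorb the weight into the base ($a_e$ for $q\to\infty$, $b_e$ for $q\to1$), leaving a pure $\ell^q$ quantity controlled by elementary inequalities while the residual weight contributes only a bounded factor raised to the power $1/q$, which tends to $1$. I expect the only delicate point to be checking that all comparison constants are genuinely independent of $\eta$ (they involve only $\sigma$, $\lvert E\rvert$, and $q$), so that they survive the passage to the infimum over the unbounded polyhedron $\Adm(\hat{\Ga})$.
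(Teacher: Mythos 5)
Your proof is correct, but it takes a genuinely different route from the paper. The paper resolves the $q$-dependence of $\hat{\sigma}$ by a change of variables: it defines a transformed family $\tilde{\Ga}$ with usage matrix $\hat{\cN}\bsi$ (where $\bsi=\Diag(\sigma)$), observes that $\eta\in\Adm(\hat{\Ga})$ iff $\bsi^{-1}\eta\in\Adm(\tilde{\Ga})$ and that $\cE_{q,\hat{\sigma}}(\eta)=\cE_{q,\sigma}(\bsi^{-1}\eta)$, so that $\Mod_{q,\hat{\sigma}}(\hat{\Ga})=\Mod_{q,\sigma}(\tilde{\Ga})$ with a weight $\sigma$ that no longer depends on $q$; the limits then follow immediately from the continuity and $p\to\infty$ statements of Theorem~\ref{thm:generalize} applied to $\tilde{\Ga}$. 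You instead prove the two limits from scratch by a squeeze with constants uniform over $\Adm(\hat{\Ga})$, which is sound: your identity $\cE_{q,\hat{\sigma}}(\eta)=\sum_e\sigma(e)\bigl(\eta(e)/\sigma(e)\bigr)^q$ is exactly the same algebraic observation the paper exploits (using $q/p=q-1$), and your two-sided bounds, the passage of multiplicative bounds through the infimum, and the convergence of the constants $\sigma(E)^{1/q}$, $(\min_e\sigma(e))^{1/q}$, $|E|^{-(1-1/q)}$, and $e^{\pm A(q-1)/q}$ to $1$ are all correct for a finite edge set with strictly positive weights. What your approach buys is self-containedness --- you do not need to invoke the continuity of $p\mapsto\Mod_{p,\sigma}$ from Theorem~\ref{thm:generalize}, and in effect you reprove the relevant endpoint cases of it; what the paper's approach buys is brevity and reuse of existing machinery, reducing the lemma to a one-line reparametrization plus a citation.
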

\begin{proof}
  Let $\cN,\hat{\cN}\in\mathbb{R}_{\ge 0}^{\Gamma\times E}$ be the
  usage matrices for  $\Ga$ and $\hat{\Ga}$ respectively.
  Let $\bsi\in\mathbb{R}^{E\times E}$ be the diagonal matrix with
  entries $\bsi(e,e) = \sigma(e)$, and define
  $\tilde{\cN} = \hat{\cN}\bsi$, with $\tilde{\Ga}$ its associated
  family in $\R_{\ge 0}^E$.  Note that $\eta\in\Adm(\hat{\Ga})$ if and only if
  $\bsi^{-1}\eta\in\Adm(\tilde{\Ga})$.  Moreover, for every
  $\eta\in\Adm(\hat{\Ga})$,
  \begin{equation*}
    \cE_{q,\hat{\sigma}}(\eta) = \sum_{e\in E}\hat{\sigma}(e)\eta(e)^q
    = \sum_{e\in E}\sigma(e)\left(\frac{\eta(e)}{\sigma(e)}\right)^q
    = \cE_{q,\sigma}(\bsi^{-1}\eta),
  \end{equation*}
  which implies that
  \begin{equation*}
    \Mod_{q,\hat{\sigma}}(\hat{\Ga}) = \Mod_{q,\sigma}(\tilde{\Ga}).
  \end{equation*}
  Taking the limit as $q\to 1$ and using  the continuity of $p$-modulus with respect to $p$, see  Theorem~\ref{thm:generalize}, we get that
  \begin{equation*}
    \lim_{q\to 1}\Mod_{q,\hat{\sigma}}(\hat{\Ga})^{\frac{1}{q}} 
    =  \lim_{q\to 1}\Mod_{q,\sigma}(\tilde{\Ga})^{\frac{1}{q}} 
    = \Mod_{1,\sigma}(\tilde{\Ga})
    = \min_{\eta\in\Adm(\hat{\Ga})}\sum_{e\in E}\sigma(e)
    \left(\frac{\eta(e)}{\sigma(e)}\right)
    = \Mod_{1,1}(\hat{\Ga}).
  \end{equation*}
  Taking the limit as $q\to\infty$ and using
  Theorem~\ref{thm:generalize} shows that
  \begin{equation*}
    \lim_{q\to\infty}\Mod_{q,\hat{\sigma}}(\hat{\Ga})^{\frac{1}{q}} 
    = \lim_{q\to \infty}\Mod_{q,\sigma}(\tilde{\Ga})^{\frac{1}{q}} 
    = \Mod_{\infty,1}(\tilde{\Ga})
    = \min_{\eta\in\Adm(\hat{\Ga})}\max_{e\in E}\left(\frac{\eta(e)}{\sigma(e)}\right)
    = \Mod_{\infty,\sigma^{-1}}(\hat{\Ga}).
  \end{equation*}
\end{proof}
Taking the limit as $p\to 1$ in Theorem~\ref{thm:duality} then gives
the following theorem.
\begin{theorem}
  \label{thm:dual-infty}
  Under the assumptions of Theorem~\ref{thm:duality},
  \begin{equation}\label{eq:dual-infty}
    \Mod_{1,\sigma}(\Ga)\Mod_{\infty, \sigma^{-1}}(\hat{\Ga}) =1.
  \end{equation}
\end{theorem}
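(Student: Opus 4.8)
The plan is to obtain~(\ref{eq:dual-infty}) as a limiting case of the duality identity~(\ref{eq:duality}) from Theorem~\ref{thm:duality}, sending $p\to 1^+$ so that the conjugate exponent $q = p/(p-1)\to\infty$. Since Theorem~\ref{thm:duality} guarantees
\[
\Mod_{p,\sigma}(\Ga)^{1/p}\,\Mod_{q,\hat{\sigma}}(\hat{\Ga})^{1/q} = 1
\]
for every $1<p<\infty$, it suffices to compute the limit of each factor separately and then multiply.

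For the first factor, I would invoke the continuity of $p\mapsto\Mod_{p,\sigma}(\Ga)$ on $[1,\infty)$ asserted in Theorem~\ref{thm:generalize}. Because $\Ga$ is nontrivial and finite, $\Mod_{p,\sigma}(\Ga)$ is finite and strictly positive in a neighborhood of $p=1$, so the convergence $\Mod_{p,\sigma}(\Ga)\to\Mod_{1,\sigma}(\Ga)$ together with $1/p\to 1$ gives
\[
\lim_{p\to 1^+}\Mod_{p,\sigma}(\Ga)^{1/p} = \Mod_{1,\sigma}(\Ga).
\]

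For the second factor, the preceding Lemma supplies exactly what is needed: as $q\to\infty$,
\[
\lim_{q\to\infty}\Mod_{q,\hat{\sigma}}(\hat{\Ga})^{1/q} = \Mod_{\infty,\sigma^{-1}}(\hat{\Ga}).
\]
Passing to the limit in the product identity, which equals $1$ identically for $1<p<\infty$, then yields $\Mod_{1,\sigma}(\Ga)\,\Mod_{\infty,\sigma^{-1}}(\hat{\Ga}) = 1$, as claimed.

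The argument is essentially bookkeeping once the Lemma is in hand; the only point requiring care is checking that the two one-sided limits exist and are finite and nonzero, so that taking the limit of the product is legitimate and cannot produce an indeterminate form. This is guaranteed by nontriviality of $\Ga$, which forces positivity of the modulus, together with the continuity statement in Theorem~\ref{thm:generalize}. Accordingly I do not expect a genuine obstacle at this stage: the substantive work was already carried out in the Lemma that identified the $q\to\infty$ limit of the dual modulus, and this theorem is a clean corollary of that computation combined with continuity in $p$.
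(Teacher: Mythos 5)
Your argument is exactly the paper's: the paper proves this theorem by passing to the limit $p\to 1$ in Theorem~\ref{thm:duality}, using the continuity of $p\mapsto\Mod_{p,\sigma}(\Ga)$ from Theorem~\ref{thm:generalize} for the first factor and the preceding Lemma (which handles the $q$-dependence of $\hat{\sigma}$) for the second. The proposal is correct and adds only the sensible remark that both limits are finite and nonzero, so no issues arise.
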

Note that taking the limit as $p\to\infty$ simply yields the same
result for the unweighted case.

\section{Blocking Duality for Families of Objects}\label{sec:examples}

\subsection{Duality for $1$-modulus}
Suppose that $G=(V,E,\si)$ is a weighted graph, with weights $\si\in\R_{> 0}^E$, and $\Ga$ is a non-trivial, finite family of subsets of $E$, where $\cN$ be the corresponding usage matrix. In this case we can equate each $\ga\in\Ga$ with the vector $\ones_{\ga}\in\R_{\ge 0}^E$, so we think of $\Ga$ as living in $\{0,1\}^E\subset\R_{\ge 0}^E$.
Recall that $\Mod_{1,\si}(\Ga)$ is the value of the linear program:
\begin{equation}\label{eq:min-cut}
  \begin{split}
    \text{minimize} &\qquad \si^T\rho \\
    \text{subject to} &\qquad  \rho\ge 0,\quad \cN \rho \geq \one
  \end{split}
\end{equation}
Since this is a feasible linear program, strong duality holds,
and the dual problem is
\begin{equation}\label{eq:max-flow}
  \begin{split}
    \text{maximize} &\qquad \la^T\one \\
    \text{subject to} &\qquad  \la \ge 0,\quad \cN^T \la \leq \si.
  \end{split}
\end{equation}
We think of (\ref{eq:max-flow}) as a (generalized) max-flow problem, given the weights $\si$. 
That's because the condition $\cN^T \la \leq \si$ says that for every $e\in E$
\[
\sum_{\ga\in\Ga}\lambda(\ga)\cN(\ga,e) = \sum_{\stackrel{\ga\in\Ga}{e\in\ga}}\la(\ga)\leq\si(e).
\]
However,
to think of (\ref{eq:min-cut}) as a (generalized) min-cut problem, we would need to be able to restrict the densities $\rho$ to some given subsets of $E$. That's exactly what the Fulkerson  blocker does.
\begin{proposition}\label{prop:mod1mc}
Suppose $G=(V,E)$ is a finite graph and $\Ga$ is a family of subsets of $E$ with Fulkerson blocker family $\hat{\Ga}$.
Then for any set of weights $\si\in\R_{>0}^E$,
\begin{equation}\label{eq:mod1mc}
\Mod_{1,\si}(\Ga)=\min_{\hat{\ga}\in\hat{\Ga}} \sum_{e\in E}\hat{\cN}(\hat{\ga},e)\si(e).
\end{equation}
Moreover, for every $\hat{\ga}\in\hat{\Ga}$ there is a choice of $\si\in \R_{\ge 0}^E$ such that $\hat{\ga}$ is the unique solution of (\ref{eq:mod1mc}).
\end{proposition}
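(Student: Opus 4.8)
The plan is to read both claims directly off Fulkerson's theorem, specifically part~(1) of Theorem~\ref{thm:fulkerson}, which gives $\Adm(\Ga)=\Dom(\hat\Ga)=\co(\hat\Ga)+\R_{\ge 0}^E$. By definition $\Mod_{1,\si}(\Ga)$ is the value of the linear program~\eqref{eq:min-cut}, i.e.\ the minimum of $\rho\mapsto\sum_{e\in E}\si(e)\rho(e)$ over the polyhedron $\Adm(\Ga)$, so~\eqref{eq:mod1mc} just expresses the familiar fact that a linear functional with positive coefficients is minimized over $\Dom(\hat\Ga)$ at one of its generating vertices $\hat\ga\in\hat\Ga$.

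For the inequality $\Mod_{1,\si}(\Ga)\ge\min_{\hat\ga\in\hat\Ga}\sum_{e}\si(e)\hat\cN(\hat\ga,e)$, I would take an arbitrary $\rho\in\Adm(\Ga)$ and write $\rho=\hat\cN^T\nu+z$ with $\nu\in\cP(\hat\Ga)$ and $z\ge 0$, as permitted by $\Adm(\Ga)=\co(\hat\Ga)+\R_{\ge 0}^E$. Since $\si>0$ and $z\ge 0$, the contribution $\sum_e\si(e)z(e)$ is nonnegative and may be discarded, while the remaining term is the convex combination $\sum_{\hat\ga}\nu(\hat\ga)\sum_e\si(e)\hat\cN(\hat\ga,e)$, which is at least the minimum of the numbers being averaged; taking the infimum over $\rho$ yields the bound. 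The reverse inequality is immediate, since each $\hat\ga\in\hat\Ga$ is an extreme point of $\Adm(\Ga)$ and hence admissible, so its energy $\sum_e\si(e)\hat\cN(\hat\ga,e)$ bounds $\Mod_{1,\si}(\Ga)$ from above; minimizing over $\hat\ga$ gives $\min_{\hat\ga}\sum_e\si(e)\hat\cN(\hat\ga,e)\ge\Mod_{1,\si}(\Ga)$. Together these prove~\eqref{eq:mod1mc}.

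For the second assertion, fix $\hat\ga_0\in\hat\Ga$. It is a vertex of the polyhedron $\Adm(\Ga)$, which is full-dimensional because it contains translates of $\R_{\ge 0}^E$; hence the normal cone at $\hat\ga_0$ is full-dimensional, and any vector $c$ in its interior isolates $\hat\ga_0$, meaning $c^T x>c^T\hat\ga_0$ for all $x\in\Adm(\Ga)\setminus\{\hat\ga_0\}$ (see~\cite[Sec.~18]{Rockafellar1970} for these polyhedral facts). Such a $c$ is automatically nonnegative: recessiveness of $\Adm(\Ga)$ lets us increase any single coordinate of $\hat\ga_0$ while staying in $\Adm(\Ga)$, and the minimizing property of $\hat\ga_0$ then forbids any negative entry of $c$. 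Setting $\si:=c\in\R_{\ge 0}^E$ and restricting the strict inequality to the finite subset $\hat\Ga\subset\Adm(\Ga)$ makes $\hat\ga_0$ the unique solution of~\eqref{eq:mod1mc}.

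The routine part is the first two paragraphs, which are pure bookkeeping with the decomposition $\rho=\hat\cN^T\nu+z$ and the sign of $\si$. I expect the only real content to be the polyhedral input in the last paragraph: that a vertex of a polyhedron is exposed by a linear functional whose minimizer is unique, and that recessiveness forces this functional to have nonnegative entries so that it qualifies as an edge weight $\si$. Both facts I would pull from the standard theory of faces of polyhedra in~\cite{Rockafellar1970}.
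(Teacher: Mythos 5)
Your proof is correct and follows essentially the same route as the paper: both claims are read off the identification $\Adm(\Ga)=\Dom(\hat{\Ga})$ from Theorem~\ref{thm:fulkerson}(1), with the linear program $\min\si^T\rho$ attained at a vertex of the dominant, i.e.\ at some $\hat{\ga}\in\hat{\Ga}$. For the uniqueness claim the paper simply cites \cite[Thm.~18.6]{Rockafellar1970} (exposed points are dense in the extreme points, hence for a recessive polyhedron with finitely many vertices every vertex is exposed), whereas you give the equivalent normal-cone argument explicitly, including the useful observation that recessiveness forces the exposing functional to be nonnegative so that it qualifies as a weight vector $\si$.
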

\begin{proof}
By Theorem \ref{thm:fulkerson}(1)
\begin{equation*}
\Adm(\Ga)=\Dom(\hat{\Ga})
\end{equation*}
 So if $\si\in\R_{>0}^E$ is a given set of weights, then, by (\ref{eq:min-cut}), $\Mod_{1,\si}(\Ga)$ is the value of the linear program
\begin{equation}\label{eq:min-cut-dom}
  \begin{split}
    \text{minimize} &\qquad \si^T\rho \\
    \text{subject to} &\qquad  \rho\in \Dom(\hat{\Ga}).
  \end{split}
\end{equation}
In particular, the optimal value is attained at a vertex of $\Dom(\hat{\Ga})$, namely for an object $\hat{\ga}\in\hat{\Ga}$. Therefore, the optimization can be restricted to $\hat{\Ga}$.

The last sentence of the proposition follows from~\cite[Thm.~18.6]{Rockafellar1970} since  $\Adm(\Ga)$ is a recessive polyhedron with finitely many extreme points.
\end{proof}

\begin{remark}
When $\Ga$ is a family of subsets of $E$, it is customary to say that
$\Ga$ {\it has the max-flow-min-cut property}, if its Fulkerson blocker $\hat{\Ga}$ is also a family of subsets of $E$.
For more details we refer to the discussion in \cite[Chapter 3]{lovasz:cbms1986}.  
\end{remark}

\subsection{Connecting families}
\label{sec:connectfam}
Let $G$ be an undirected graph and let $\Ga=\Ga(a,b)$ be the family of all simple paths connecting two distinct nodes $a$ and $b$, i.e., the $ab$-paths in $G$. Consider the family $\Ga_{\rm cut}(a,b)$ of all minimal $ab$-cuts.  Recall that an $ab$-cut $S$ is called minimal if its boundary $\partial S$ does not contain the boundary of any other $ab$-cut as a strict subset.

Note that (\ref{eq:max-flow}) in this case is exactly the max-flow problem.  It is not surprising, then, that~\eqref{eq:min-cut} is closely related to the min-cut problem.  Indeed, every $ab$-cut, $S\subset V$ produces a density $\rho_S := 1_{\partial S}$ that is admissible for~\eqref{eq:min-cut} since every path $\gamma\in\Gamma(a,b)$ must have at least one edge in common with $\partial S$.  Moreover, the max-flow min-cut theorem implies that there exists an $ab$-cut $S$ whose value (i.e., $\sigma^T\rho_S$) equals the value of~\eqref{eq:max-flow}.  Strong duality, then, implies that such a $\rho_S$ minimizes~\eqref{eq:min-cut}.

The last part of Proposition~\ref{prop:mod1mc}, therefore, shows that every element of $\hat{\Gamma}$ is a minimal $ab$-cut.  Conversely, if $\hat{\ga}$ is a minimal $ab$-cut, then we can define $\si$ to be very small on $\hat{\ga}$ and large otherwise, so that $\hat{\ga}$ is the unique solution of the min-cut problem. Therefore, the Fulkerson blocker of $\Ga(a,b)$  is $\hat{\Ga}(a,b)=\Ga_{\rm cut}(a,b)$.

Moreover, the duality
\begin{equation*}
  \Mod_{p,\sigma}(\Gamma)^{\frac{1}{p}}\Mod_{q,\hat{\sigma}}(\hat{\Gamma})^{\frac{1}{q}}=1
\end{equation*}
can be viewed as a generalization of the max-flow min-cut theorem.  
To see this, consider the limiting case~\eqref{eq:dual-infty}.
As discussed above, $\Mod_{1,\si}(\Ga)$
takes the value of the minimum $ab$-cut with edge weights $\sigma$.

With a little work, the second modulus in~\eqref{eq:dual-infty}, can be
recognized as the reciprocal of the corresponding max flow problem.
Using the standard trick for $\infty$-norms, the modulus problem $\Mod_{\infty,\si^{-1}}(\hat{\Ga})$ can
be transformed into a linear program taking the form
\begin{equation*}
  \begin{split}
    \text{minimize} &\qquad t \\
    \text{subject to} &\qquad  \sigma(e)^{-1}\eta(e) \le t\;\forall e\in E\\
    &\qquad \eta\ge 0,\quad \hat{\cN} \eta \geq 1
  \end{split}
\end{equation*}
The minimum must occur somewhere on the boundary of $\Adm(\hat{\Ga})$ and,
therefore, by Theorem~\ref{thm:fulkerson}(2), must take the form
\begin{equation*}
\eta(e)=\sum_{\gamma\in\Gamma}\lambda(\gamma)\ones_{\gamma}(e)\qquad
\lambda(\gamma)\ge 0,\;\sum_{\gamma\in\Gamma}\lambda(\gamma)=1.
\end{equation*}
In other words, the minimum occurs at a unit $st$-flow $\eta$, and the
problem can be restated as
\begin{equation*}
  \begin{split}
    \text{minimize} &\qquad t \\
    \text{subject to} &\qquad  \frac{1}{t}\eta(e) \le \sigma(e)\;
    \forall e\in E\\
    &\qquad \eta\;\text{a unit $st$-flow}
  \end{split}  
\end{equation*}
The minimum is attained when $\frac{1}{t}\eta$ is a maximum $st$-flow
respecting edge capacities $\sigma(e)$; the value of such a flow is
$1/t$, thus establishing the connection between the $\infty$-modulus and the max-flow problem.

\subsection{Spanning tree modulus}

When $\Gamma$ is the set of spanning trees on an unweighted,
undirected graph $G$ with $\cN(\gamma,\cdot)=\ones_\gamma(\cdot)$, the
Fulkerson blocker $\hat{\Ga}$ can be interpreted as the set of (weighted) \emph{feasible
  partitions}~\cite{chopra1989}.
\begin{definition}
  A \textit{feasible partition} $P$ of a graph $G=(V,E)$ is a
  partition of the vertex set $V$ into two or more subsets,
  $\{V_1, \ldots , V_{k_P}\}$, such that each of the induced subgraphs
  $G(V_i)$ is connected. The corresponding edge set, $E_P$, is defined
  to be the set of edges in $G$ that connect vertices belonging to different $V_i$'s.
\end{definition}
The results of~\cite{chopra1989} imply the following theorem.

\begin{theorem}
  Let $G=(V,E)$ be a simple, connected, unweighted, undirected graph
  and let $\Gamma$ be the family of spanning trees on $G$. Then the Fulkerson blocker of $\Ga$ is the set of all vectors
  \begin{equation*}
   \frac{1}{k_P-1}\ones_{E_P}.
  \end{equation*}
ranging over all feasible partitions $P$.
\end{theorem}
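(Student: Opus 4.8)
The plan is to identify $\hat{\Ga}=\ext(\Adm(\Ga))$ by passing to the blocker side and invoking the known facet structure of the spanning-tree dominant. By Theorem~\ref{thm:fulkerson}(2), $\Adm(\hat{\Ga})=\Dom(\Ga)$, where $\Dom(\Ga)$ is the dominant of the spanning-tree incidence vectors $\{\ones_\gamma:\gamma\in\Ga\}$, and $\Adm(\Ga)=\BL(\Dom(\Ga))$. It is a standard fact in Fulkerson blocking theory that the extreme points of a recessive polyhedron correspond (after normalizing each defining inequality to have right-hand side $1$) to the facets of its blocker; thus describing $\hat{\Ga}$ is equivalent to listing the facets of $\Dom(\Ga)$. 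So first I would reduce the theorem to the statement that the non-trivial facets of $\Dom(\Ga)$ are exactly the feasible-partition inequalities $\sum_{e\in E_P}x(e)\ge k_P-1$.

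The easy half is to check that each $\eta_P:=\frac{1}{k_P-1}\ones_{E_P}$ lies in $\Adm(\Ga)$ and is tight. Contracting each part $V_i$ of a feasible partition $P$ to a single super-node turns any spanning tree $\gamma$ into a connected spanning subgraph of the quotient graph on $k_P$ super-nodes, which must use at least $k_P-1$ of the crossing edges $E_P$; hence $\ones_\gamma^T\eta_P=\frac{1}{k_P-1}\abs{\gamma\cap E_P}\ge 1$, so $\eta_P\in\Adm(\Ga)$. Conversely, choosing a spanning tree inside each connected $G(V_i)$ together with a spanning tree of the (connected) quotient graph produces a spanning tree meeting $E_P$ in exactly $k_P-1$ edges, so the inequality $\eta_P^T x\ge 1$ is tight and $\eta_P$ is a genuine boundary point. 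I would then quote Chopra's description of the spanning-tree dominant from~\cite{chopra1989}, which gives both that $\Dom(\Ga)=\{x\ge 0:\sum_{e\in E_P}x(e)\ge k_P-1\ \text{for every feasible partition }P\}$ and, crucially, that each such feasible-partition inequality is facet-defining while no other non-trivial facets occur. Combining this facet list with the blocking correspondence above yields $\hat{\Ga}=\{\eta_P:P\ \text{a feasible partition}\}$, since an irredundant normalized facet description of $\Dom(\Ga)=\BL(\Adm(\Ga))$ is unique and must coincide with the extreme-point set $\hat{\Ga}$.

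The main obstacle is entirely contained in Chopra's facet characterization: showing that \emph{every} non-trivial facet of $\Dom(\Ga)$ arises from a feasible partition, equivalently that the collection $\{\eta_P\}$ already exhausts the extreme points of $\Adm(\Ga)$. Proving that a given $\eta_P$ is an extreme point amounts to exhibiting enough tight spanning trees so that their incidence vectors span $\R^{E_P}$, forcing any convex decomposition of $\eta_P$ to be trivial; the harder converse---that nothing else can be extreme---is the substantive combinatorial content supplied by~\cite{chopra1989}. I would therefore lean on that reference for the completeness of the list, while the admissibility, tightness, and blocking bookkeeping above are routine.
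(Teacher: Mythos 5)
Your proposal is correct and follows essentially the same route as the paper, which proves this theorem simply by invoking the facet description of the spanning-tree dominant from~\cite{chopra1989}. Your added bookkeeping---the contraction argument for admissibility and tightness of each $\eta_P$, and the standard blocking correspondence between non-trivial facets of $\Dom(\Ga)$ and extreme points of $\Adm(\Ga)$---is sound and makes explicit what the paper leaves implicit.
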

This fact plays an important role in \cite{achpcst}.

\section{Blocking Duality and the Probabilistic Interpretation}\label{sec:prob}

At the end of Section~\ref{sec:dual-prob} it was claimed that blocking
duality was closely related to Lagrangian duality.  In this section,
we make this connection explicit.

\begin{theorem}\label{thm:probint}
  Let $G=(V,E,\si)$ be a graph and $\Gamma$ a finite family of objects on
  $G$ with Fulkerson blocker $\hat{\Ga}$.  For a given $1<p<\infty$, let
  $\mu^*$ be an optimal pmf for the minimization problem
  in~\eqref{eq:prob-interp} and let $\eta^*$ be optimal for
  $\Mod_{q,\hat{\sigma}}(\hat{\Ga})$.  Then, in the notation of
  Section~\ref{sec:dual-prob},
  \begin{equation}\label{eq:probabinterp}
    \eta^*(e) = \mathbb{E}_{\mu^*}\left[\cN(\underline{\gamma},e)\right].
  \end{equation}
\end{theorem}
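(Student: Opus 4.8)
The plan is to connect the two optimality characterizations we already have in hand: the one for the probabilistic minimizer $\mu^*$ from Theorem~\ref{thm:prob-interp}, and the one for the blocking minimizer $\eta^*$ from Theorem~\ref{thm:duality}. Both theorems express their respective optimizers in terms of the \emph{same} extremal density $\rho^*$ for $\Mod_{p,\si}(\Ga)$, so the strategy is to eliminate $\rho^*$ between the two formulas and read off the claimed identity~\eqref{eq:probabinterp}.

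**First I would** recall the two key relations. From Theorem~\ref{thm:prob-interp}, any optimal pmf $\mu^*$ satisfies
\[
\bE_{\mu^*}\left[\cN(\underline{\gamma},e)\right]=\frac{\si(e)\rho^*(e)^{\frac{p}{q}}}{\Mod_{p,\si}(\Ga)}\qquad\forall e\in E,
\]
where I should note that $p/q = p - 1$ since $q = p/(p-1)$. From Theorem~\ref{thm:duality}, the unique blocking optimizer $\eta^*\in\Adm(\hat{\Ga})$ satisfies~\eqref{eq:rhostaretastar}, namely
\[
\eta^*(e)=\frac{\si(e)\rho^*(e)^{p-1}}{\Mod_{p,\si}(\Ga)}\qquad\forall e\in E.
\]

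**The comparison is then immediate:** the right-hand sides of these two displays are literally identical once one observes $p/q = p-1$, so $\eta^*(e)=\bE_{\mu^*}\left[\cN(\underline{\gamma},e)\right]$ for every $e\in E$, which is exactly~\eqref{eq:probabinterp}. The main point requiring care—and the step I expect to be the only genuine obstacle—is verifying that the $\mu^*$ optimal for~\eqref{eq:prob-interp} and the $\eta^*$ optimal for $\Mod_{q,\hat\si}(\hat\Ga)$ are being expressed through the \emph{same} $\rho^*$, and that the exponents and weight conventions ($\hat\si(e)=\si(e)^{-q/p}$ versus $\si(e)$) line up consistently across the two theorems. Since Theorem~\ref{thm:duality} already establishes that $\eta^*$ is unique and Theorem~\ref{thm:prob-interp}'s formula holds for \emph{any} optimal $\mu^*$, there is no ambiguity: both are tied to the unique extremal $\rho^*$ of the $p$-modulus problem on $\Ga$. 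I would close by remarking that this makes the expected edge-usage vector of the optimal probabilistic object coincide exactly with the optimal blocking density, which is the conceptual content of the theorem.
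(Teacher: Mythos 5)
Your proposal is correct: since $q=p/(p-1)$ gives $p/q=p-1$, the right-hand sides of the ``moreover'' formula in Theorem~\ref{thm:prob-interp} and of~\eqref{eq:rhostaretastar} in Theorem~\ref{thm:duality} coincide, both being $\si(e)\rho^*(e)^{p-1}/\Mod_{p,\si}(\Ga)$ for the same unique extremal $\rho^*$, and the identity~\eqref{eq:probabinterp} follows at once. This is, however, a genuinely different route from the paper's. The paper does not invoke either closed-form relation with $\rho^*$; instead it argues structurally: by Theorem~\ref{thm:fulkerson}(2), every $\eta\in\Adm(\hat{\Ga})$ decomposes as $\eta=\cN^T\mu+\eta_0$ with $\mu\in\cP(\Gamma)$ and $\eta_0\ge 0$, the energy $\cE_{q,\hat\si}$ strictly prefers $\eta_0=0$, and $\cE_{q,\hat\si}(\cN^T\mu)$ is exactly the objective of the probabilistic dual~\eqref{eq:prob-interp}; uniqueness of the $q$-modulus minimizer then forces $\eta^*=\cN^T\mu^*$. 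The paper's argument buys the additional structural insight that the blocking optimizer necessarily lies in $\co(\Gamma)$ (it \emph{is} an expected-usage vector), and it works directly from blocking duality without routing through the KKT-derived formulas. Your argument is shorter and perfectly legitimate, but it inherits its entire content from the two prior ``moreover'' clauses; you should make sure you cite both explicitly and note that the $\rho^*$ appearing in each is the same object (the unique minimizer of $\Mod_{p,\si}(\Ga)$), which you do. No gap.
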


\begin{proof}
  Every $\eta\in\Adm(\hat{\Ga})$ can be written as the sum of a convex
  combination of the vertices of $\Adm(\hat{\Ga})$ and a nonnegative vector.
  In other words, $\eta\in\Adm(\hat{\Ga})$ if and only if there exists
  $\mu\in\cP(\Gamma)$ and $\eta_0\in\mathbb{R}_{\ge 0}^E$ such that
  $\eta = \cN^T\mu + \eta_0$.  Or, in probabilistic notation,
  \begin{equation*}
    \eta(e) = \sum_{\gamma\in\Gamma}\cN(\gamma,e)\mu(\gamma) + \eta_0(e)
    = \mathbb{E}_{\mu}\left[\cN(\underline{\gamma},e)\right] + \eta_0(e).
  \end{equation*}
  For such an $\eta$,
  \begin{equation*}
    \cE_{q,\hat{\sigma}}(\eta) = \sum_{e\in E}\sigma(e)^{-\frac{q}{p}}\eta(e)^q
    \ge \sum_{e\in E}\sigma(e)^{-\frac{q}{p}} 
    \mathbb{E}_{\mu}\left[\cN(\underline{\gamma},e)\right]^q
  \end{equation*}
  with equality holding if and only if $\eta_0=0$.  This implies that
  the optimal $\eta^*$ must be of the form
  $\eta^*=\cN^T\mu'=\mathbb{E}_{\mu'}\left[\cN(\underline{\gamma},\cdot)\right]$
  for some $\mu'\in\cP(\Gamma)$.

  Now, let $\mu^*$ be any optimal pmf for~\eqref{eq:prob-interp} and let $\eta'=\cN^T\mu^*$.  Since $\eta'=\cN\mu^*\in\Dom(\Gamma)$, Theorem~\ref{thm:fulkerson}(2) implies that $\eta'\in\Adm(\hat{\Ga})$.  Moreover, by optimality of $\mu^*$,
  \begin{equation*}
    \cE_{q,\hat{\sigma}}(\eta') =
    \sum_{e\in E}\sigma(e)^{-\frac{q}{p}} 
    \mathbb{E}_{\mu^*}\left[\cN(\underline{\gamma},e)\right]^q
    \le 
    \sum_{e\in E}\sigma(e)^{-\frac{q}{p}} 
    \mathbb{E}_{\mu'}\left[\cN(\underline{\gamma},e)\right]^q
    = \cE_{q,\hat{\sigma}}(\eta^*).
  \end{equation*}
  But, since $1<q<\infty$, the minimizer for
  $\Mod_{q,\hat{\sigma}}(\hat{\Ga})$ is unique and, therefore,
  $\eta'=\eta^*$. So
  $\eta^*=\cN^T\mu^*=\mathbb{E}_{\mu^*}\left[\cN(\underline{\gamma},\cdot)\right]$
  as claimed.
\end{proof}

\section{The $\de_p$ metrics and a new proof that effective resistance is a metric}\label{sec:metrics}

We saw in Theorem \ref{thm:generalize} that in the case of connecting families $\Mod_{p,\si}(\Ga(a,b))$ satisfies: 
\begin{itemize}
\item $\Mod_{\infty,1}(\Ga(a,b))^{-1}=\ell(\Ga(a,b))$ is the (unweighted) shortest-path length;
\item $\Mod_{2,\si}(\Ga(a,b))^{-1}=\cReff(a,b)$ is the effective resistance metric;
\item $\Mod_{1,\si}(\Ga(a,b))^{-1}=\MC(a,b)^{-1}$ is the reciprocal of mincut.
\end{itemize}
In all three cases, if $G$ is a connected graph, these are distances (or metrics). The fact that shortest-path $d_{\rm SP}(a,b):=\ell(\Ga(a,b))$ is a metric on $V$ is well known and follows easily from the definition.
 
The fact that $d_{\rm MC}(a,b):=\MC(a,b)^{-1}$ is an ultrametric (i.e. that the sum can be replaced by the maximum in the triangle inequality) is left as an exercise, or see \cite{afpc} where a proof is given.

The fact that effective resistance $d_{\rm ER}(a,b):=\cReff(a,b)$ is a metric has several known proofs. See \cite[Exercise 9.8]{levin-peres-wilmer2009}, for a proof using current flows, and see \cite[Corollary 10.8]{levin-peres-wilmer2009}, for one using commute times. 
As a consequence of Theorem \ref{thm:deltap}, we will provide yet another proof that effective resistance is a metric on graphs.

\begin{definition}
Let $G=(V,E,\si)$ be a weighted, connected, simple graph. Given $a,b\in V$, let $\Ga(a,b)$ be the connecting family of all paths between $a$ and $b$. Fix $1<p<\infty$ and let $q:=p/(p-1)$ be the H\"{o}lder conjugate exponent. Then we define
\[
\de_p(a,b):=
\begin{cases}
  0 & \text{if }a = b,\\
  \Mod_{p,\si}(\Ga(a,b))^{-q/p} &\text{if }a\ne b.
\end{cases}
\]
\end{definition}

\begin{theorem}\label{thm:deltap}
Suppose $G=(V,E,\si)$ is a weighted, connected, simple graph. Then $\de_p$ is a metric on $V$. Moreover, 
\begin{itemize}
\item[(a)] $\lim_{p\uparrow\infty}\de_p= d_{\rm SP}$;
\item[(b)] $\de_2=d_{\rm ER}$; 
\item[(c)] For $1<p<2$, $\Mod_{p,\si}(\Ga(a,b))^{-1}$ is a metric and it tends to $d_{\MC}(a,b)$ as $p\rightarrow 1$.
\end{itemize}
Finally, for every $\ep>0$ and every $p\in [1,\infty]$ there is a connected graph for which $\de_p^{1+\ep}$ is not a metric.
\end{theorem}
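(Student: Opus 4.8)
The plan is to pass to the blocking dual, where $\de_p$ becomes a modulus of \emph{cuts}, and then to prove the triangle inequality by a purely combinatorial splitting of cuts together with the subadditivity of modulus. First I would rewrite $\de_p$ via Theorem~\ref{thm:duality}: since $\Mod_{p,\si}(\Ga(a,b))^{1/p}\Mod_{q,\hat{\si}}(\hat{\Ga}(a,b))^{1/q}=1$, we get $\de_p(a,b)=\Mod_{p,\si}(\Ga(a,b))^{-q/p}=\Mod_{q,\hat{\si}}(\hat{\Ga}(a,b))$, and by Section~\ref{sec:connectfam} the Fulkerson blocker $\hat{\Ga}(a,b)$ is the family of minimal $ab$-cuts. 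Enlarging a cut only weakens its admissibility constraint, so the modulus over minimal cuts equals the modulus over \emph{all} $ab$-cuts; hence
\[
\de_p(a,b)=\Mod_{q,\hat{\si}}\left(\{\,S : S\ \text{is an }ab\text{-cut}\,\}\right),\qquad \hat{\si}(e)=\si(e)^{-q/p}.
\]
Symmetry is clear since $G$ is undirected, and for $a\ne b$ the family is non-trivial so $0<\Mod_{p,\si}(\Ga(a,b))<\infty$, giving $\de_p(a,b)>0$; together with $\de_p(a,a)=0$ this settles positivity.

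The heart of the argument is the triangle inequality, and this is where the dual picture pays off. Given $a,b,c$, every $ac$-cut $S$ has either $b\notin S$, making $S$ an $ab$-cut, or $b\in S$, making $S$ a $bc$-cut; thus the family of $ac$-cuts is contained in the union of the $ab$-cut and $bc$-cut families. Applying countable subadditivity (Proposition~\ref{prop:basicprop}(b), valid since $1<q<\infty$) and monotonicity yields
\[
\de_p(a,c)\le \Mod_{q,\hat{\si}}(\text{all }ab\text{-cuts})+\Mod_{q,\hat{\si}}(\text{all }bc\text{-cuts})=\de_p(a,b)+\de_p(b,c).
\]
I expect this to be the main conceptual obstacle: the tempting ``primal'' route of adding the two extremal unit flows fails, because the $q$-energy is \emph{superadditive} for $q>1$, so one genuinely must argue on the cut side via subadditivity rather than by composing flows. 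Recognizing the cut-splitting ($ac$-cuts are $ab$- or $bc$-cuts) as the right mechanism is the key insight.

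The limiting identities then follow from Theorem~\ref{thm:generalize}. For (b), $q=p=2$ gives $\de_2=\Mod_{2,\si}(\Ga)^{-1}=\cReff=d_{\rm ER}$. For (a), I would write $\de_p=\left(\Mod_{p,\si}(\Ga)^{1/p}\right)^{-p/(p-1)}$ and use $\Mod_{p,\si}(\Ga)^{1/p}\to\ell(\Ga)^{-1}$ together with $p/(p-1)\to 1$, so $\de_p\to\ell(\Ga)=d_{\rm SP}$. For (c), note that $\Mod_{p,\si}(\Ga)^{-1}=\de_p^{\,p-1}$; for $1<p<2$ the exponent $p-1$ lies in $(0,1)$, so this is a snowflake of the metric $\de_p$ and hence a metric, while continuity of modulus in $p$ gives $\Mod_{p,\si}(\Ga)^{-1}\to\MC(a,b)^{-1}=d_{\rm MC}$ as $p\to1$.

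Finally, for the sharpness of the exponent $1$, I would take $G$ to be the unweighted two-edge path $a-b-c$. A direct computation gives $\de_p(a,b)=\de_p(b,c)=1$ and $\de_p(a,c)=2$ for every $1<p<\infty$ (a single edge has $\Mod_p=1$, and the series path forces unit usage on both edges so that the $p$-dependence cancels in the exponent $-q/p$); the same values hold in the limits $p\to1$ and $p\to\infty$, so the counterexample is uniform in $p\in[1,\infty]$. Then for any $\ep>0$,
\[
\de_p(a,c)^{1+\ep}=2^{1+\ep}>2=\de_p(a,b)^{1+\ep}+\de_p(b,c)^{1+\ep},
\]
so $\de_p^{1+\ep}$ violates the triangle inequality. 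The point is that exact additivity along a series path makes this the extremal obstruction, pinning the anti-snowflaking exponent at precisely $1$.
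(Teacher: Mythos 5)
Your proposal is correct and follows essentially the same route as the paper's proof: pass to the Fulkerson blocker (all $ab$-cuts), derive the triangle inequality from the cut-splitting inclusion together with monotonicity and subadditivity, obtain (a)--(c) from Theorem~\ref{thm:generalize} and the snowflake exponent $p/q=p-1\in(0,1)$, and exhibit sharpness on the two-edge path. The only differences are cosmetic relabelings of the vertices in the cut decomposition and in the counterexample.
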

\begin{remark}
Note that, in light of Theorem~\ref{thm:generalize}, when $p=2$, the proof of Theorem \ref{thm:deltap} gives an alternative modulus-based proof that effective resistance is a metric.
\end{remark}
\begin{remark}
It is straightforward to show that an arbitrary positive power of an ultrametric is also an ultrametric, so $(d_{\rm MC})^t$ is a metric for any $t>0$.  Using~\eqref{eq:monotone-decr} and~\eqref{eq:monotone-incr} it can be shown that as $p\downarrow 1$, $\de_p$  converges to the limit
\begin{equation*}
  \lim_{t\to\infty}(d_{\rm MC}(a,b))^t =
  \begin{cases}
    0 & \text{if }d_{\rm MC}(a,b) > 1,\\
    1 & \text{if }d_{\rm MC}(a,b) = 1,\\
    \infty & \text{if }d_{\rm MC}(a,b) < 1.
 \end{cases}
\end{equation*}
For unweighted graphs, this limit essentially decomposes the graph into its 2-edge-connected components.  All nodes in the same component are distance zero from one another while nodes in different components are at distance one.
\end{remark}
\begin{proof} 
Assuming the claim that $\de_p$ is a metric, the `Moreover' parts (a) and (b) follow from Theorem \ref{thm:generalize}. For (c), recall that a metric $d$ can always be raised to an exponent $0<\ep<1$ and still remain a metric. Since for $1<p<2$, we have $p/q<1$, it follows that
$\Mod_{p,\si}(\Ga(a,b))^{-1}=\de_p^{p/q}$ is a metric, and the claim follows from continuity in $p$.
Finally, the fact that the exponent $1$ is sharp for the metrics $\de_p$ is shown in \cite{afpc}. For completeness, we repeat the argument here. 
  Consider the (unweighted) path graph $P_3$ with edges  $\{a,c\},\{c,b\}$ and fix
  $p\in (1,\infty)$.  First $\Mod_p(\Ga(a,c)) = 1$, because any
  admissible density $\rho$ must satisfy $\rho(a,c)=1$, furthermore, to minimize the energy, we also set $\rho(c,b)=0$. Likewise, $\Mod_p(\Ga(c,b)) =1$.  For $\Mod_p(\Ga(a,b))$, the energy
  is minimized when $\rho(a,c)=\rho(c,b)=1/2$. Thus,
\[
\Mod_p(\Ga(a,b)) = (1/2)^p + (1/2)^p = 2^{1-p}
\]
Hence, $ \de_p(a,b) = 2^{q(p-1)/p} = 2=1+1=\de_p(a,c)+\de_p(c,b)$.  In particular, the triangle inequality will fail for $\de_p^t$ as soon as $t>1$.

The proof of the main claim hinges on the dual formulation in terms of Fulkerson blocker duality. Fix $p\in (1,\infty)$. Recall from Section \ref{sec:connectfam}, that the Fulkerson blocker family for $\Ga(a,b)$ is the family of all minimal $ab$-cuts $\hat{\Ga}(a,b)$.
An important observation at this point is that the word `minimal' can be omitted without changing the modulus problem: since every $ab$-cut contains a minimal $ab$-cut, any $\eta$ that is admissible for the minimal cut family is admissible for the family of all $ab$-cuts.  Without loss of generality, then, we consider $\hat{\Ga}(a,b)$ to be the set of all $ab$-cuts.  By Theorem \ref{thm:duality},
\[
\Mod_{p,\si}(\Ga(a,b))^{-q/p}=\Mod_{q,\hat{\si}}(\hat{\Ga}(a,b)),
\]
where $q:=p/(p-1)$ is the H\"older conjugate exponent of $p$ and $\hat{\si}=\si^{-q/p}$.

Now suppose $a,b,c\in V$ are distinct. Then, for every $ab$-cut $S\in\hat{\Ga}(a,b)$, we have the following mutually exclusive cases: either $c\in S$ or $c\not \in S$. Therefore, 
\[
\hat{\Ga}(a,b)\subset\hat{\Ga}(a,c)\cup\hat{\Ga}(c,b).
\]
The triangle inequality then follows from monotonicity (\ref{eq:monotonicity}) and  subadditivity (\ref{eq:subadditivity}) of modulus:
\begin{align*}
\de_p(a,b) & = \Mod_{p,\si}(\Ga(a,b))^{-q/p}&\text{(Definition)}\\
& =\Mod_{q,\hat{\si}}(\hat{\Ga}(a,b))& \text{(Fulkerson duality)}\\
& \leq\Mod_{q,\hat{\si}}(\hat{\Ga}(a,c)\cup\hat{\Ga}(c,b))& \text{(Monotonicity)}\\
& \leq \Mod_{q,\hat{\si}}(\hat{\Ga}(a,c))+\Mod_{q,\hat{\si}}(\hat{\Ga}(c,b)) &\text{(Subadditivity)}\\
& =\de_p(a,c)+\de_p(c,b).&\text{(Fulkerson duality)}
\end{align*}
Verifying the remaining metric axioms  is left to the reader.
\end{proof}

\section{Edge-conductance monotonicity}\label{sec:monotonicity}

When studying the $p$-modulus of a family of objects $\Ga$ on a weighted graph $G=(V,E,\si)$, we often refer to the weights $\si(e)$ as edge-conductances. This terminology originates in the special case of connecting families $\Ga(a,b)$ on undirected graphs with $p=2$. In that case, $\Mod_{2,\si}(\Ga(a,b))$ coincides with effective conductance and we can give an electrical network interpretation to the various quantities of interest.
In particular, the optimal density $\rho^*(e)$ represents the absolute voltage potential drop across $e$, $\si(e)$ is the conductance of $e$, and therefore $\si(e)\rho^*(e)$ is the current flow across $e$ (by Ohm's law). Moreover, recall
the optimal density for the Fulkerson blocker $\eta^*(e)$, which probabilistically is the expected usage of $e$ by random paths under an optimal pmf (see Theorem \ref{thm:probint}). We know that $\eta^*(e)$ is related to $\rho^*(e)$ via (\ref{eq:rhostaretastar}), which can be written in this case as
\[
\eta^*(e)=\frac{\si(e)\rho^*(e)}{\Mod_{2,\si}(\Ga(a,b))}.
\]
Therefore, $\eta^*(e)$ is proportional to the current flow across $e$. And
\[
\rho^*(e)\eta^*(e)=\frac{\si(e)\rho^*(e)^2}{\sum_{e'\in E}\si(e')\rho^*(e')^2}
\]
is the fraction of the total dissipated power due to the resistor on edge  $e$.

In the theory of electrical networks, the following edge-conductance monotonicity property is well known, see for instance Spielman's notes \cite[Problem 4]{spielmanpb4}.
\begin{proposition}\label{prop:spielman}
Let $G = (V, E)$  be an undirected, connected graph and let $r$ be the edge resistances. Let $e$
be an edge of $E$ and let $\tilde{r}$ be another set of resistances such that $\tilde{r}(e') = r(e')$, for all  $e'\neq e$, and
$\tilde{r}(e) \geq r(e)$.
Fix an edge  $\{s, t\}$ of $G$. If one unit of current flows from $s$ to $t$, the amount of
current that flows through edge $e$ under resistances $\tilde{r}$ is no larger than the amount that flows under resistances
$r$.
\end{proposition}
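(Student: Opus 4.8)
The plan is to recast the proposition entirely in terms of blocking duality, where the dependence on the resistances becomes transparent, and then deduce the monotonicity from a one-line convexity argument. First I would translate the electrical quantities into modulus language. Fix the edge $\{s,t\}$, let $\Ga(s,t)$ be the connecting family of $st$-paths, and set $\si:=r^{-1}$ for the conductances. By Theorem~\ref{thm:generalize}, $\Mod_{2,\si}(\Ga(s,t))=\cCeff(s,t)$, and the extremal density $\rho^*$ records the normalized voltage drops (total drop $1$ from $s$ to $t$). As in the discussion preceding the proposition, when a single unit of current is driven from $s$ to $t$ the magnitude of the current across an edge $e'$ equals
\[
\eta^*(e')=\frac{\si(e')\rho^*(e')}{\Mod_{2,\si}(\Ga(s,t))},
\]
the quantity appearing in~\eqref{eq:rhostaretastar}. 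Applying Theorem~\ref{thm:duality} with $p=q=2$, this $\eta^*$ is exactly the unique extremal density for the dual problem $\Mod_{2,\hat\si}(\hat\Ga(s,t))$, where $\hat\Ga(s,t)$ is the family of $st$-cuts and $\hat\si=\si^{-1}=r$. In other words, the vector of edge currents is the minimizer of $\eta\mapsto\sum_{e'\in E}r(e')\eta(e')^2$ over the admissible set $\Adm(\hat\Ga(s,t))$.

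The key structural observation is that $\Adm(\hat\Ga(s,t))$ is a fixed convex polyhedron that depends only on the graph (through its cut family) and not at all on the resistances; replacing $r$ by $\tilde r$ alters only the quadratic energy functional, leaving the feasible region untouched. Writing $\eta^*$ and $\tilde\eta^*$ for the (unique, by strict convexity) minimizers of $\cE_{2,r}$ and $\cE_{2,\tilde r}$ over this common set, I would compare them by the standard pair of optimality inequalities
\[
\cE_{2,r}(\eta^*)\le\cE_{2,r}(\tilde\eta^*),\qquad
\cE_{2,\tilde r}(\tilde\eta^*)\le\cE_{2,\tilde r}(\eta^*).
\]
Since $r$ and $\tilde r$ agree off $e$, one has $\cE_{2,\tilde r}(\eta)-\cE_{2,r}(\eta)=(\tilde r(e)-r(e))\,\eta(e)^2$ for every $\eta$. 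Adding the two inequalities and cancelling the common terms yields $(\tilde r(e)-r(e))\,\tilde\eta^*(e)^2\le(\tilde r(e)-r(e))\,\eta^*(e)^2$; assuming $\tilde r(e)>r(e)$ (equality being trivial) and using that both densities are nonnegative, this gives $\tilde\eta^*(e)\le\eta^*(e)$, i.e.\ the current through $e$ does not increase.

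I expect the main obstacle to lie entirely in the first step, namely the careful bookkeeping behind the identification ``current through $e$ equals $\eta^*(e)$.'' One must track the normalization (unit applied voltage versus unit total current, which introduces the factor $\cCeff^{-1}=\Mod_{2,\si}(\Ga(s,t))^{-1}$) and verify that for $p=2$ the dual weight $\hat\si=\si^{-q/p}$ is exactly the resistance $r$, so that the edge currents literally form the extremal density of a resistance-weighted $2$-modulus problem on the cut family. Once this bridge is established, the monotonicity is the elementary argument above; moreover, since that argument never used $p=2$ or the specific cut family, it extends verbatim to arbitrary $1<p<\infty$ and arbitrary object families, which is precisely the generalization foreshadowed in the introduction.
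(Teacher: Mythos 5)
Your proposal is correct, and it takes a genuinely different route from the paper. The paper does not prove Proposition~\ref{prop:spielman} directly (it cites Spielman's notes) but instead derives the generalization, Theorem~\ref{thm:monotonicity}, from the analytic machinery of Theorem~\ref{thm:concave}: Lipschitz continuity, concavity, and differentiability of $\si\mapsto\Mod_{p,\si}(\Ga)$ (proved via Clarkson's inequalities), from which the formula $\partial\phi/\partial\si(e)=\rho^*_\si(e)^p$ together with concavity gives that $\rho^*_\si(e)$ is weakly decreasing, and then Fulkerson duality transfers this to $\eta^*_\si(e)$ exactly as you do. Your argument replaces all of that with the elementary exchange of optimality inequalities over the common feasible set $\Adm(\hat\Ga(s,t))$, which is valid because (as you correctly note) the admissible set of the blocker problem is independent of the weights; the cancellation $(\tilde r(e)-r(e))\,\tilde\eta^*(e)^2\le(\tilde r(e)-r(e))\,\eta^*(e)^2$ is airtight, and your normalization bookkeeping (unit total current versus unit voltage, the factor $\Mod_{2,\si}(\Ga(s,t))^{-1}$ in~\eqref{eq:rhostaretastar}, and $\hat\si=\si^{-1}=r$ when $p=q=2$) matches the identifications the paper itself makes at the start of Section~\ref{sec:monotonicity}. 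What your approach buys is that it needs no differentiability or concavity of the modulus in $\si$, and, as you observe, it proves the monotonicity of the dual extremal density for every $1<p<\infty$ and every family directly; what it does not give you is the quantitative information in Theorem~\ref{thm:concave}(4) (the exact derivative of the modulus), which the paper uses elsewhere, e.g.\ for part~(a) of Theorem~\ref{thm:monotonicity} and for the Jensen inequality~\eqref{eq:jensen} in Section~\ref{sec:lovasz}. One cosmetic caveat: for part~(b) of Theorem~\ref{thm:monotonicity} your trick does not apply verbatim, since there the weight being varied enters the primal problem whose feasible set $\Adm(\Ga)$ is also weight-independent --- so in fact the same two-inequality argument does work on the primal side too, giving an alternative elementary proof of (b) as well.
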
 
Our goal is to generalize Proposition \ref{prop:spielman} to $p$-modulus of arbitrary families of objects.
In the language of modulus, Proposition \ref{prop:spielman} says that if $\{s,t\}$ is an edge in $E$ and we are trying to computing $\Mod_{2,\si}(\Ga(s,t))$, then lowering $\si(e)$ on some edge $e\in E$ results in a new modulus problem $\Mod_{2,\tilde{\si}}(\Gamma(s,t))$ whose extremal density satisfies $\rho^*_{\tilde{\si}}(e)\le \rho^*_\si(e)$.

Theorem \ref{thm:concave} below is a reformulation, in the context of general families of objects, of results from \cite[Section 6.2]{abppcw:ecgd2015} that  were formulated in terms of families of walks. In order, to keep the flow of the paper intact, we have relegated the proof of Theorem \ref{thm:concave} to the Appendix.
\begin{theorem}[\cite{abppcw:ecgd2015}]\label{thm:concave}
Let $G=(V,E,\si)$ be a graph and $\Ga$ a non-empty and non-trivial finite family of objects on $G$. Fix $1< p<\infty$ and let $\rho^*_\si$ be the extremal density for $\Mod_{p,\si}(\Ga)$. Then 
\begin{enumerate}
\item the map
$\phi: \R_{> 0}^E\rightarrow \R$ given by $\phi(\si):=\Mod_{p,\si}(\Ga)$ is Lipschitz continuous; 
\item the extremal density $\rho^*_\si$ is also continuous in $\si$;
\item the map $\phi$ is concave;
\item the map $\phi$ is differentiable, and the partial derivatives of $\phi$ satisfy 
\[
\frac{\bd \phi}{\bd \si(e)}=\rho^*_{\si}(e)^p\qquad\forall e\in E.
\]
\end{enumerate}
\end{theorem}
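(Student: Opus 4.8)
The plan is to prove Theorem~\ref{thm:concave} by exploiting the dual (probabilistic/Lagrangian) characterization of modulus rather than working directly with the primal definition, since the objective $\cE_{p,\si}(\rho)=\sum_e\si(e)\rho(e)^p$ is linear in $\si$ for each fixed $\rho$. The key structural observation is that, by strong duality (Theorem~\ref{thm:prob-interp}) or equivalently by the envelope/minimax description, we may write
\begin{equation*}
  \phi(\si)=\Mod_{p,\si}(\Ga)=\min_{\rho\in\Adm(\Ga)}\sum_{e\in E}\si(e)\rho(e)^p.
\end{equation*}
Because the feasible set $\Adm(\Ga)$ does not depend on $\si$, and for each fixed feasible $\rho$ the map $\si\mapsto\sum_e\si(e)\rho(e)^p$ is linear (hence concave), $\phi$ is an infimum of a family of affine functions of $\si$ and is therefore concave on $\R_{>0}^E$. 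This immediately yields~(3).

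Next I would establish the regularity claims~(1) and~(2). For Lipschitz continuity, I would use that $\phi$ is a minimum of affine functions with slope vector $\left(\rho(e)^p\right)_{e\in E}$, together with the uniform bound $0\le\rho^*\le\cN_{\min}^{-1}$ from Remark~\ref{rem:remarks}(b): any two values $\phi(\si)$ and $\phi(\tilde\si)$ differ by at most $\sum_e|\si(e)-\tilde\si(e)|\,\rho(e)^p$ for a common near-optimal $\rho$, giving a Lipschitz constant controlled by $\cN_{\min}^{-p}$ on compact subsets. Continuity of the minimizer $\rho^*_\si$ in~(2) follows from uniqueness of the extremal density (strict convexity of $\cE_{p,\si}$ for $1<p<\infty$) combined with a standard compactness argument: along any sequence $\si_n\to\si$, the minimizers $\rho^*_{\si_n}$ lie in the compact box $[0,\cN_{\min}^{-1}]^E$, any subsequential limit is admissible, and by continuity of the energy and of $\phi$ the limit must be the unique minimizer $\rho^*_\si$, so the whole sequence converges.

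For the differentiability and derivative formula in~(4), I would invoke the envelope theorem (Danskin's theorem) for the infimum-of-affine representation: since the minimizer $\rho^*_\si$ is unique for each $\si$, the concave function $\phi$ is differentiable and its gradient equals the slope of the unique active affine piece, namely
\begin{equation*}
  \frac{\partial\phi}{\partial\si(e)}=\rho^*_\si(e)^p\qquad\forall e\in E.
\end{equation*}
Rigorously, I would bound the difference quotient on both sides by evaluating the affine function associated with $\rho^*_\si$ (giving one inequality from admissibility at the perturbed weight) and with $\rho^*_{\tilde\si}$ (giving the reverse), then let the perturbation tend to zero and use the continuity of $\rho^*_\si$ from~(2) to squeeze both bounds to $\rho^*_\si(e)^p$.

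The main obstacle I anticipate is not any single step but rather making the envelope-theorem argument fully rigorous: one must justify that the unique minimizer selects a single differentiable branch of $\phi$ and control the difference quotients uniformly, which is exactly where the continuity of $\rho^*_\si$ (step~2) and the a priori bound $\rho^*\le\cN_{\min}^{-1}$ do the essential work. The concavity, Lipschitz, and continuity parts are comparatively routine consequences of the infimum-of-affine-functions viewpoint; the delicate point is upgrading concavity plus uniqueness of the subgradient into genuine differentiability with the clean closed-form partial derivatives.
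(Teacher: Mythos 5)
Your proposal is correct and follows the same overall skeleton as the paper's Appendix proof: concavity from the fact that $\phi$ is an infimum over the $\si$-independent set $\Adm(\Ga)$ of functions linear in $\si$; Lipschitz continuity from testing the energy at $\si_1$ against the optimal density for $\si_2$ together with the a priori bound $\rho^*\le\cN_{\min}^{-1}$ (note this gives a \emph{global} Lipschitz constant $\cN_{\min}^{-p}$ in the $\ell^1$ norm, not merely one on compact subsets); and differentiability from the two-sided sandwich of the difference quotient between $\cE_{p,\tau}(\rho^*_{\si_\ep})$ and $\cE_{p,\tau}(\rho^*_{\si})$, closed by the continuity of the minimizer. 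The one place you genuinely diverge is part (2): the paper proves continuity of $\si\mapsto\rho^*_\si$ quantitatively, via a uniform-convexity estimate derived from Clarkson's inequalities (Lemma~\ref{lem:quasimin}), which bounds $\|\rho-\rho^*\|_p$ explicitly by the excess energy $\cE_{p,\si}(\rho)^{M/p}-\Mod_{p,\si}(\Ga)^{M/p}$; you instead use a soft compactness-plus-uniqueness argument (subsequential limits of $\rho^*_{\si_n}$ in the box $[0,\cN_{\min}^{-1}]^E$ are admissible minimizers, hence equal $\rho^*_\si$). Both are valid; the paper's route yields an explicit modulus of continuity (a H\"older-type rate in $\|\si_1-\si_2\|_1$), which is more information than the theorem requires, while your argument is shorter and suffices for everything downstream, including the squeeze in part (4). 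Your closing remark that concavity plus uniqueness of the subgradient upgrades to differentiability is also a legitimate alternative to the paper's appeal to continuity of the partial derivatives, provided you cite the relevant fact for concave functions (e.g.\ \cite[Theorem~25.1]{Rockafellar1970}) rather than leaving it as an informal appeal to Danskin's theorem.
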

\begin{theorem}\label{thm:monotonicity}
Under the hypothesis of Theorem \ref{thm:concave}, with $\eta^*_\si$ given by (\ref{eq:rhostaretastar}), we have that in each variable $\si(e)$,
\begin{itemize}
\item[(a)] $\Mod_{p,\si}(\Ga)$ is weakly increasing.
\item[(b)] $\rho^*_\si(e)$ is weakly decreasing.
\item[(c)] $\eta^*_\si(e)$ is weakly increasing.
\end{itemize}
\end{theorem}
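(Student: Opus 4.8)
The plan is to handle the three parts in the order (a), (b), (c), drawing (a) and (b) directly from the results already in hand and then obtaining (c) from (b) by a self-dual change of variables. For (a) I would argue straight from the definition rather than through Theorem~\ref{thm:concave}: the admissible set $\Adm(\Ga)$ depends only on the usage matrix $\cN$, not on the weights, so it is the same set for every choice of $\si$. Hence if $\tilde\si$ agrees with $\si$ except that $\tilde\si(e)\ge\si(e)$, then $\cE_{p,\si}(\rho)\le\cE_{p,\tilde\si}(\rho)$ for every $\rho\in\R_{\ge 0}^E$, and taking the infimum over the common feasible set $\Adm(\Ga)$ gives $\Mod_{p,\si}(\Ga)\le\Mod_{p,\tilde\si}(\Ga)$. (Equivalently, one may just note $\bd\phi/\bd\si(e)=\rho^*_\si(e)^p\ge 0$ from Theorem~\ref{thm:concave}(4).)

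For (b), the inputs are the concavity and the derivative formula of Theorem~\ref{thm:concave}. I would fix $e$, freeze the remaining weights, and view $t\mapsto\phi(\si)$ with $\si(e)=t$ as the restriction of the concave map $\phi$ to a coordinate line. This restriction is concave and differentiable, with derivative $\rho^*_\si(e)^p$ by Theorem~\ref{thm:concave}(4). A differentiable concave function of one real variable has a weakly decreasing derivative, so $\rho^*_\si(e)^p$ is weakly decreasing in $\si(e)$; since $x\mapsto x^{1/p}$ is increasing on $\R_{\ge 0}$, so is $\rho^*_\si(e)$.

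For (c), I would apply (b) to the \emph{dual} modulus problem. By Theorem~\ref{thm:duality}, the density $\eta^*_\si$ defined in~\eqref{eq:rhostaretastar} is exactly the unique extremal density for $\Mod_{q,\hat{\si}}(\hat{\Ga})$, where $\hat{\si}(e)=\si(e)^{-q/p}$. The blocker $\hat{\Ga}$ is a non-trivial finite family (its members are the nonzero extreme points of $\Adm(\Ga)$) and $1<q<\infty$, so the hypotheses of Theorem~\ref{thm:concave}, and therefore part (b), apply verbatim to $\Mod_{q,\hat{\si}}(\hat{\Ga})$. Part (b) then says $\eta^*_\si(e)$ is weakly decreasing in the dual weight $\hat{\si}(e)$. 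Since $\hat{\si}(e)=\si(e)^{-q/p}$ is a strictly decreasing function of $\si(e)$, and no other dual weight $\hat{\si}(e')$ depends on $\si(e)$, the composition reverses the monotonicity and yields that $\eta^*_\si(e)$ is weakly increasing in $\si(e)$.

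The main difficulty is conceptual rather than computational: granting Theorem~\ref{thm:concave}, part (b) is immediate, and the crux of (c) is recognizing that Fulkerson duality turns the weight map $\si\mapsto\hat{\si}$ into an order-reversing change of variables, so that the ``decreasing'' statement (b) for the blocker becomes the ``increasing'' statement (c) for the original family. The one point I would be careful to verify is that the blocker family indeed meets the non-triviality and finiteness hypotheses needed to apply (b) to the dual problem.
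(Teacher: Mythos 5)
Your proposal is correct and follows essentially the same route as the paper: part (b) via concavity plus the derivative formula of Theorem~\ref{thm:concave} (the paper phrases the ``decreasing derivative'' fact as the chord inequality $f'(0)\ge (f(h)-f(0))/h\ge f'(h)$, which is the same thing), and part (c) by applying (b) to the dual problem $\Mod_{q,\hat\si}(\hat\Ga)$ and using that $\hat\si(e)=\si(e)^{-q/p}$ is order-reversing in $\si(e)$ while leaving the other dual weights untouched. The only divergence is in (a), where you argue directly from the weight-independence of $\Adm(\Ga)$ rather than via Lipschitz continuity and the fundamental theorem of calculus as the paper does; your version is more elementary and equally valid, and your added check that $\hat\Ga$ is finite and non-trivial (so that Theorem~\ref{thm:concave} applies to the blocker) is a point the paper leaves implicit.
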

\begin{remark}
Note that Theorem \ref{thm:monotonicity} (c), can be reformulated using the probabilistic interpretation (\ref{eq:probabinterp}) as saying that if $\si(e)$ increases (and the other weights are left alone), then the expected usage of edge $e$ increases.
\end{remark}
\begin{proof}[Proof of Theorem \ref{thm:monotonicity}]
For part (a),  by Theorem \ref{thm:concave} (1), $\Mod_{p,\si}(\Ga)$ is absolutely continuous in $\si(e)$. In particular, the fundamental theorem of calculus holds and the result follows from  Theorem \ref{thm:concave} (4).

For part (b), write $f(h):=\Mod_{p,\si_h}(\Ga)$, where $\si_h:=\si+h\ones_{e}$. Set $h>0$. Then, by concavity and differentiability (Theorem \ref{thm:concave} (3) and (4)),
\[
f'(0)\ge \frac{f(h)-f(0)}{h}\ge f'(h). 
\]
The result follows from  Theorem \ref{thm:concave} (4) since
\begin{equation*}
  f'(h) = \frac{\partial}{\partial \sigma_h(e)}\phi(\sigma_h) = \rho_{\sigma_h}^*(e)^p.
\end{equation*}

Note that (\ref{eq:rhostaretastar}) is not sufficient to prove part (c), since
it's not immediately clear how the right-hand side varies with $\si(e)$. Instead, we use the fact that, by Theorem~\ref{thm:duality}, $\eta^*_\si$ is the optimal density for $\Mod_{q,\hat{\si}}(\hat{\Ga})$ where $\hat{\si}=\si^{-q/p}$ (a smooth decreasing function of $\si$), and use part (b).
\end{proof}

\section{Randomly weighted graphs}\label{sec:lovasz}
In this section we explore the main arguments in \cite{lovasz:jcta2001} and recast them in the language of modulus.
The goal is to study graphs $G=(V,E,\si)$ where the weights $\si\in\R_{>0}^E$ are random variables and compare modulus computed on $G$ to the corresponding modulus computed on the deterministic graph $\bE G:=(V,E,\bE\si)$. Theorem \ref{thm:lovasz} below is a reformulation of Theorem 7 in \cite{lovasz:jcta2001}, which generalized Theorem 2.1 in
\cite{lyons-pemantle-peres:jcta1999}.  In Theorem~\ref{thm:bounds}, we combine Theorem~\ref{thm:lovasz} with the monotonicity properties in Theorem~\ref{thm:generalize} to obtain a new lower bound for the expected $p$-modulus in terms of $p$-modulus on $\bE G$.

First we recall a lemma from Lov{\'a}sz's paper.
\begin{lemma}[{\cite[Lemma 9]{lovasz:jcta2001}}]\label{lem:expmin}
Let $W\in\R_{> 0}^E$ be a random variable with survival function
\[S(t):=\bP\left( W\ge t\right), \qquad \text{for }t\in\R_{\ge 0}^E.\]
If $S(t)$ is log-concave, then the survival function of $\min_{e\in E} W(e)$ is also log-concave and 
$W$ satisfies 
\begin{equation}\label{eq:expmin}
\bE\left(\min_{e\in E} W(e)\right)\ge \left(\sum_{e\in E} \frac{1}{\bE(W(e))}\right)^{-1}.
\end{equation}
\end{lemma}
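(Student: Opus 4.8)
The plan is to treat the two assertions separately, since the log-concavity of the survival function of $\min_{e\in E}W(e)$ is immediate, while the inequality~\eqref{eq:expmin} carries essentially all of the work.

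For the first assertion, set $M:=\min_{e\in E}W(e)$. Its survival function is
\[
\bP(M\ge s)=\bP\left(W(e)\ge s\ \ \forall e\in E\right)=\bP(W\ge s\one)=S(s\one),
\]
that is, the restriction of $S$ to the diagonal ray $\{s\one:s\ge 0\}$. Since $\log S$ is concave on $\R_{\ge 0}^E$ by hypothesis and $s\mapsto s\one$ is affine, the composition $s\mapsto\log S(s\one)$ is concave; hence the survival function of $M$ is log-concave, as claimed.

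For the inequality I would first rewrite both sides via the layer-cake (tail-integral) formula. Writing $m_e:=\bE(W(e))$ and letting $\mathbf{b}_e\in\R_{\ge 0}^E$ denote the vector with a single nonzero entry equal to $1$ in coordinate $e$, we have
\[
\bE(M)=\int_0^\infty S(s\one)\,ds,\qquad m_e=\int_0^\infty\bP(W(e)\ge t)\,dt=\int_0^\infty S(t\,\mathbf{b}_e)\,dt.
\]
The crucial step is to use log-concavity of the \emph{full} survival function $S$ to bound its diagonal values by the marginals. For any probability vector $\theta\in\R_{>0}^E$ (so $\theta_e>0$ and $\sum_{e\in E}\theta_e=1$), the identity $s\one=\sum_{e\in E}\theta_e\left((s/\theta_e)\mathbf{b}_e\right)$ exhibits $s\one$ as a convex combination of the points $(s/\theta_e)\mathbf{b}_e$, so log-concavity yields the pointwise bound
\[
S(s\one)\ge\prod_{e\in E}S\left((s/\theta_e)\mathbf{b}_e\right)^{\theta_e}=\prod_{e\in E}\bP\left(W(e)\ge s/\theta_e\right)^{\theta_e}.
\]
Integrating in $s$ gives, for every admissible $\theta$, the lower bound
\[
\bE(M)\ge\int_0^\infty\prod_{e\in E}\bP\left(W(e)\ge s/\theta_e\right)^{\theta_e}\,ds,
\]
and it then remains to choose $\theta$ so that the right-hand side is at least $\left(\sum_{e\in E}1/m_e\right)^{-1}$.

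The last step is where the real difficulty lies, and I expect it to be the main obstacle. As a sanity check, when the $W(e)$ are independent exponentials with means $m_e$ the integrand collapses to $\exp\left(-s\sum_{e\in E}1/m_e\right)$ for \emph{every} $\theta$, the bound is an equality, and it reproduces $\left(\sum_e 1/m_e\right)^{-1}$; for general $W$ one must genuinely optimize over $\theta$, and the optimal choice need not be the naive $\theta_e\propto 1/m_e$. To carry this out I would exploit that each marginal $t\mapsto S(t\,\mathbf{b}_e)=\bP(W(e)\ge t)$ is itself log-concave, being a restriction of $S$ to a coordinate axis, and invoke the single-crossing property of a log-concave survival function against the exponential of the same mean: the concave function $t\mapsto\log\bP(W(e)\ge t)+t/m_e$ vanishes at $0$ and has nonnegative right derivative there (log-concavity forces the hazard rate at $0$ to be at most $1/m_e$), so $\bP(W(e)\ge t)\ge e^{-t/m_e}$ on an initial interval. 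Converting these one-dimensional comparisons into a lower bound for the optimized integral that reduces the general case to the exponential one is precisely the content of Lov\'asz's argument, and it is the delicate part of the proof; everything else is bookkeeping.
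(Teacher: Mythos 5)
First, a point of reference: the paper does not prove this lemma at all --- it is imported verbatim as Lemma~9 of Lov\'asz \cite{lovasz:jcta2001} and used as a black box, so there is no in-paper argument to compare yours against. Judged on its own terms, your first assertion is proved completely and correctly: $\bP(\min_{e}W(e)\ge s)=S(s\one)$, and composing the concave function $\log S$ with the affine map $s\mapsto s\one$ preserves concavity. Your setup for the inequality is also sound: the layer-cake identities, the identification $\bP(W(e)\ge t)=S(t\,\mathbf{b}_e)$ (valid because $W>0$ componentwise), and the multi-point Jensen step $S(s\one)\ge\prod_{e}S\bigl((s/\theta_e)\mathbf{b}_e\bigr)^{\theta_e}$ are all correct.

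The proof is nevertheless unfinished, and the gap is genuine rather than cosmetic. You have reduced \eqref{eq:expmin} to showing that for some probability vector $\theta$ one has $\int_0^\infty\prod_{e}\bP(W(e)\ge s/\theta_e)^{\theta_e}\,ds\ge\bigl(\sum_{e}1/m_e\bigr)^{-1}$, and that is precisely where all the difficulty sits; the route you sketch does not close it. The single-crossing comparison $\bP(W(e)\ge t)\ge e^{-t/m_e}$ is valid only up to the crossing point $t_e$, so with the natural choice $\theta_e\propto 1/m_e$ the pointwise bound $\prod_e S_e(s/\theta_e)^{\theta_e}\ge e^{-s\sum_e 1/m_e}$ holds only for $s\le\min_e\theta_e t_e$; integrating over that initial interval alone gives $\beta(1-e^{-s_0/\beta})<\beta:=\bigl(\sum_e 1/m_e\bigr)^{-1}$, and beyond it at least one factor is bounded the wrong way. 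Nor can one shortcut via initial hazard rates: your Jensen inequality does give $-\tfrac{d}{ds}\log S(s\one)\big|_{s=0^+}\le\sum_e 1/m_e$, but log-concavity then yields $\bE(\min_e W(e))\le$ (not $\ge$) the reciprocal of that hazard rate, and the uniform distribution shows the expectation can be as small as half of it. Finally, a two-edge example with one exponential and one uniform marginal (both of mean $1$) shows that the symmetric $\theta$ gives roughly $0.46<1/2$ while the supremum over $\theta$ is attained only as $\theta$ degenerates to a vertex of the simplex, so the optimization over $\theta$ is not a formality. What is missing is exactly the one-dimensional comparison machinery that constitutes Lov\'asz's proof of his Lemma~9, which your proposal explicitly defers to rather than supplies.
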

Property (\ref{eq:expmin}) is satisfied if for instance the random variables $\{W(e)\}_{e\in E}$ are independent and distributed as exponential variables ${\rm Exp}(\la(e))$, i.e., so that $\bP(W(e)>t)=\min\{\exp(-\la(e)t), 1\}$.

It is useful to collect some properties of random variables with log-concave survival functions.
\begin{proposition}\label{prop:logconcavity}
Let $W\in\R_{> 0}^E$ be a random variable with log-concave survival function. Then the following random variables also have log-concave survival function:
\bi
\item[(a)] $CW$, where $C=\Diag(c(\cdot))$ with $c\in\R_{>0}^E$.
\item[(b)]  $W^*$, where  $E^*\subset E$, and $W^*\in\R_{>0}^{E^*}$ is the projection of $W$ onto $\R_{>0}^{E^*}$.
\ei
\end{proposition}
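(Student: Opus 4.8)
The plan is to reduce both statements to a single elementary fact from convex analysis: concavity is preserved under precomposition with an affine map. That is, if $g\colon\R_{\ge 0}^E\to[-\infty,\infty)$ is concave and $A$ is an affine map sending the relevant orthant into $\R_{\ge 0}^E$, then $g\circ A$ is concave. Since saying that $W$ has log-concave survival function means precisely that $\log S$ is concave on $\R_{\ge 0}^E$ (with the convention $\log 0=-\infty$), it suffices in each case to exhibit the new survival function as $S$ precomposed with such an affine map. I will write $S_W$, $S_{CW}$, $S_{W^*}$ for the survival functions of $W$, $CW$, and $W^*$.

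For part (a), I would first compute the survival function of $CW$ directly. Writing $C=\Diag(c)$ with $c\in\R_{>0}^E$, for every $t\in\R_{\ge 0}^E$ we have
\[
S_{CW}(t)=\bP\left(c(e)W(e)\ge t(e)\ \forall e\right)
=\bP\left(W(e)\ge t(e)/c(e)\ \forall e\right)=S_W(C^{-1}t),
\]
where $C^{-1}=\Diag(1/c)$ maps $\R_{\ge 0}^E$ into itself. Thus $\log S_{CW}=\log S_W\circ C^{-1}$ is the composition of a concave function with a linear map, hence concave, which is exactly log-concavity of the survival function of $CW$.

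For part (b), the key observation is that the survival function of the projection $W^*$ is \emph{not} a marginal obtained by integration, but rather a slice of $S_W$. Indeed, writing $t^*\in\R_{\ge 0}^{E^*}$ and letting $\iota(t^*)\in\R_{\ge 0}^E$ denote its extension by $0$ on $E\setminus E^*$, we have
\[
S_{W^*}(t^*)=\bP\left(W(e)\ge t^*(e)\ \forall e\in E^*\right)
=\bP\left(W(e)\ge \iota(t^*)(e)\ \forall e\in E\right)=S_W(\iota(t^*)),
\]
where the middle equality uses that $W\in\R_{>0}^E$ almost surely, so that the thresholds $\iota(t^*)(e)=0$ for $e\notin E^*$ impose no constraint. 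Since $\iota$ is a linear injection of $\R_{\ge 0}^{E^*}$ onto the face $\{t:t(e)=0,\ e\notin E^*\}$ of $\R_{\ge 0}^E$, the function $\log S_{W^*}=\log S_W\circ\iota$ is the restriction of a concave function to an affine subspace, hence concave.

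The main (and essentially only) obstacle is conceptual rather than computational: recognizing that marginalizing a multivariate survival function supported in $\R_{>0}^E$ amounts to setting the discarded thresholds to their lower limit $0$, in contrast with the usual marginalization of a CDF, which sends the discarded arguments to $+\infty$ and genuinely integrates out coordinates. Once this identification is made, both parts collapse to the affine-invariance of concavity, and no further estimates are needed.
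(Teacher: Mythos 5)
Your proposal is correct and follows essentially the same route as the paper: both parts are reduced to the fact that concavity is preserved under precomposition with an affine map, with $S_{CW}(t)=S_W(C^{-1}t)$ for (a) and the survival function of the marginal realized as a slice of $S_W$ (thresholds set to $0$ on $E\setminus E^*$) for (b). Your write-up is merely more explicit than the paper's one-line treatment of (b), in particular in justifying why the zero thresholds impose no constraint.
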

\begin{proof}
We define $S(t):=\bP(W\ge t)$ for $t\in \R_{\ge 0}^E$.
For (a), note that
\[
\log \bP\left( CW \ge t\right)= \log S(C^{-1}t),
\]
which is the composition of a concave function with an affine function.
Likewise (b) follows by composing a concave function with a projection.
\end{proof}

\begin{theorem}\label{thm:lovasz}
Let $G=(V,E,\si)$ be a simple finite graph. Assume the $\si$ is a random variable in $\R_{>0}^E$ with the property that
its survival function is log-concave.
Let $\Ga$ be a finite non-trivial family of objects on $G$, with $\cN_{\rm min}$ defined as in (\ref{eq:usagelb}).
Then
\[
\bE\Mod_{1,\si}(\Ga)\ge \cN_{\rm min}\Mod_{2,\bE\si}(\Ga).
\]
\end{theorem}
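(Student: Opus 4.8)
The plan is to bound the random min-cut quantity $\Mod_{1,\si}(\Ga)$ from below through its max-flow dual \eqref{eq:max-flow}, and then to take expectations using Lemma \ref{lem:expmin}. I would deliberately avoid the most direct route, namely writing $\Mod_{1,\si}(\Ga)=\min_{\hat\ga\in\hat\Ga}\sum_e\hat\cN(\hat\ga,e)\si(e)$ via Proposition \ref{prop:mod1mc} and applying Lemma \ref{lem:expmin} to the cut-values $\{\sum_e\hat\cN(\hat\ga,e)\si(e)\}_{\hat\ga}$: those values are genuine nonnegative combinations of the $\si(e)$ and need not inherit a log-concave survival function (Proposition \ref{prop:logconcavity} only preserves log-concavity under coordinate rescalings and projections), and even when it does apply the resulting bound can be off by a factor growing with $|\hat\Ga|$. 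The whole point of the argument is to route the randomness through the flow side, so that only single-coordinate rescalings of $\si$ ever enter Lemma \ref{lem:expmin}.

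First I would fix the deterministic data coming from the expected graph $\bE G=(V,E,\bE\si)$: let $\rho^*$ be the extremal density for $\Mod_{2,\bE\si}(\Ga)$, let $\eta^*$ be the associated Fulkerson-blocker density, so that $\eta^*(e)=\bE\si(e)\rho^*(e)/\Mod_{2,\bE\si}(\Ga)$ by \eqref{eq:rhostaretastar}, and let $\mu^*\in\cP(\Ga)$ be an optimal pmf for the probabilistic dual \eqref{eq:prob-interp}, so that $\eta^*(e)=\bE_{\mu^*}[\cN(\underline{\gamma},e)]=(\cN^T\mu^*)(e)$ by Theorem \ref{thm:probint}. For each realization of $\si$ I would then feed the flow dual \eqref{eq:max-flow} the explicit vector
\[
\la(\ga):=\mu^*(\ga)\min_{e:\,\cN(\ga,e)>0}\frac{\si(e)}{\eta^*(e)}.
\]
A one-line check shows $\la$ is feasible: for every $e$, $\sum_\ga\la(\ga)\cN(\ga,e)\le\frac{\si(e)}{\eta^*(e)}\sum_\ga\mu^*(\ga)\cN(\ga,e)=\si(e)$, using $\eta^*=\cN^T\mu^*$. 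Hence $\Mod_{1,\si}(\Ga)\ge\sum_\ga\la(\ga)$ pointwise, and taking expectations gives $\bE\Mod_{1,\si}(\Ga)\ge\sum_\ga\mu^*(\ga)\,\bE\big[\min_{e:\cN(\ga,e)>0}\si(e)/\eta^*(e)\big]$.

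For each fixed $\ga$, the vector $(\si(e)/\eta^*(e))_{e:\cN(\ga,e)>0}$ is a projection of $\si$ onto a coordinate subset followed by a diagonal rescaling, so by Proposition \ref{prop:logconcavity} it still has a log-concave survival function and Lemma \ref{lem:expmin} applies, yielding $\bE\min_{e\in\ga}\si(e)/\eta^*(e)\ge(\sum_{e\in\ga}\eta^*(e)/\bE\si(e))^{-1}$. To finish I would substitute $\eta^*(e)/\bE\si(e)=\rho^*(e)/\Mod_{2,\bE\si}(\Ga)$ and use $\cN(\ga,e)\ge\cN_{\rm min}$ on the support of $\ga$ to get $\sum_{e\in\ga}\rho^*(e)\le\cN_{\rm min}^{-1}\sum_e\cN(\ga,e)\rho^*(e)=\cN_{\rm min}^{-1}\ell_{\rho^*}(\ga)$, whence $(\sum_{e\in\ga}\eta^*(e)/\bE\si(e))^{-1}\ge\cN_{\rm min}\Mod_{2,\bE\si}(\Ga)/\ell_{\rho^*}(\ga)$. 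Combining, $\bE\Mod_{1,\si}(\Ga)\ge\cN_{\rm min}\Mod_{2,\bE\si}(\Ga)\sum_\ga\mu^*(\ga)/\ell_{\rho^*}(\ga)$, and Jensen's inequality for $x\mapsto1/x$ together with the identity $\sum_\ga\mu^*(\ga)\ell_{\rho^*}(\ga)=\sum_e\rho^*(e)\eta^*(e)=1$ (again from \eqref{eq:rhostaretastar}) gives $\sum_\ga\mu^*(\ga)/\ell_{\rho^*}(\ga)\ge1$, which closes the estimate.

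The crux is the design choice in the first two paragraphs: one must construct the right feasible flow, with per-object bottleneck weights $\mu^*(\ga)\min_{e\in\ga}\si(e)/\eta^*(e)$ read off from the expected graph, so that Lemma \ref{lem:expmin} can be applied object-by-object to rescaled single coordinates of $\si$ (where Proposition \ref{prop:logconcavity} keeps log-concavity available) and so that the constants telescope exactly to $\cN_{\rm min}\Mod_{2,\bE\si}(\Ga)$. A cruder feasible flow, such as a single globally rescaled copy of $\mu^*$, is throttled by one worst edge and loses a large factor; the remaining manipulations are only H\"older/Jensen-type inequalities and substitutions from the blocking-duality relations already in hand.
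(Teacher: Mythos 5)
Your proof is correct and follows essentially the same route as the paper's: both fix the same data $\rho^*,\eta^*,\mu^*$ on the expected graph, both reduce to the pointwise bound $\Mod_{1,\si}(\Ga)\ge\sum_{\ga}\mu^*(\ga)\min_{e:\,\cN(\ga,e)>0}\si(e)/\eta^*(e)$, and both then apply Proposition \ref{prop:logconcavity} and Lemma \ref{lem:expmin} object-by-object to the rescaled coordinates of $\si$. The only differences are presentational: the paper obtains that intermediate bound by estimating $\cE_{1,\si}(\rho)$ for every $\rho\in\Adm(\Ga)$ (weak duality written in the primal, with the sets $E^*$, $\Ga^*$ doing the bookkeeping you should also add to avoid $\si(e)/\eta^*(e)$ with $\eta^*(e)=0$ off the support of $\mu^*$), whereas you exhibit the feasible flow $\la$ directly; and the paper closes via complementary slackness ($\ell_{\rho^*}(\ga)=1$ for $\mu^*(\ga)>0$) where you use Jensen together with $\sum_e\rho^*(e)\eta^*(e)=1$ --- equivalent facts.
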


\begin{proof}
Let $\hat{\Ga}$ be the Fulkerson blocker of $\Ga$. Let $\rho^*$ be extremal for $\Mod_{2,\bE\si}(\Ga)$ and $\eta^*$ be  extremal for $\Mod_{2,(\bE\si)^{-1}}(\hat{\Ga})$. Also let $\mu^*\in\cP(\Ga)$ be an optimal measure, then we know that
\begin{equation}\label{eq:def-etastar}
   \eta^*(e) =  \frac{\bE\si(e)\rho^*(e)}{\Mod_{2,\bE(\si)}(\Ga)}=\sum_{\ga\in\Ga}\mu^*(\ga)\cN(\ga,e)=\bE_{\mu^*}\left(\cN(\underline{\ga},e)\right),\qquad\forall e\in E.
  \end{equation}
To avoid dividing by zero let $E^*:=\{e\in E: \eta^*(e)>0\}$ and let $\Ga^*:=\{\ga\in\Ga: \mu^*(\ga)>0\}$. Note that, if $e\not\in E^*$, then
\[
0=\eta^*(e)=\sum_{\ga\in\Ga^*}\mu^*(\ga)\cN(\ga,e),
\]
hence $\cN(\ga,e)=0$ for all $\ga\in\Ga^*$. Therefore, for any $\rho\in\Adm(\Ga)$ and $\ga\in\Ga^*$,
\begin{equation}\label{eq:ellrhoestar}
\sum_{e\in E^*}\cN(\ga,e)\rho(e)=\sum_{e\in E}\cN(\ga,e)\rho(e)=\ell_\rho(\ga) \ge 1.
\end{equation}
Now, fix an arbitrary $\rho\in\Adm(\Ga)$. Then, by  (\ref{eq:def-etastar}), 
\begin{equation}\label{eq:mainstep}
\cE_{1,\si}(\rho) \ge \sum_{e\in E^*}\si(e)\rho(e) =  \Mod_{2,\bE\si}(\Ga)\sum_{e\in E^*}\si(e)\rho(e)\frac{1}{\bE\si(e)\rho^*(e)}\bE_{\mu^*}\left(\cN(\underline{\ga},e)\right),
\end{equation}
where the denominator is positive since $\si>0$ and since $\rho^*> 0$ on $E^*$ by~\eqref{eq:def-etastar}.  Note that
\begin{align*}
\sum_{e\in E^*}\si(e)\rho(e)\frac{1}{\bE\si(e)\rho^*(e)}\bE_{\mu^*}\left(\cN(\underline{\ga},e)\right)   & =  \sum_{\ga\in\Ga^*}\mu^*(\ga)\sum_{e\in E^*}\frac{\si(e)}{\bE\si(e)\rho^*(e)}\cN(\ga,e) \rho(e) \\
& \ge \sum_{\ga\in\Ga^*}\mu^*(\ga)\min_{\substack{e\in E^*\\ \cN(\ga,e)\neq 0}}\frac{\si(e)}{\bE\si(e)\rho^*(e)}\sum_{e\in E^*}\cN(\ga,e)\rho(e) \\
& \ge \sum_{\ga\in\Ga^*}\mu^*(\ga)\min_{\substack{e\in E^*\\ \cN(\ga,e)\neq 0}}\frac{\si(e)}{\bE\si(e)\rho^*(e)}, 
\end{align*}
where the last inequality follows by (\ref{eq:ellrhoestar}).

Minimizing in (\ref{eq:mainstep}) over $\rho\in\Adm(\Ga)$ we find
\begin{equation}\label{eq:pre-expectation}
\Mod_{1,\si}(\Ga)  \ge  \Mod_{2,\bE\si}(\Ga)\sum_{\ga\in\Ga}\mu^*(\ga)\min_{\substack{e\in E^*\\ \cN(\ga,e)\neq 0}}\frac{\si(e)}{\bE\si(e)\rho^*(e)}
\end{equation}
Note that for each $\ga\in\Ga^*$,
by Proposition (\ref{prop:logconcavity}) (a) and (b) and Lemma \ref{lem:expmin},  the scaled random variables 
\[
X(e):=\frac{\si(e)}{\bE\si(e)\rho^*(e)}\qquad \text{for }e\in E^*\text{ with } \cN(\ga,e)\neq 0,
\]
have the property that
\[
\bE\left(\min_{\substack{e\in E^*\\ \cN(\ga,e)\neq 0}}X(e)\right)\ge  \left(\sum_{\substack{e\in E^*\\ \cN(\ga,e)\neq 0}} \frac{1}{\bE(X(e))}\right)^{-1}= \left(\sum_{\substack{e\in E^*\\ \cN(\ga,e)\neq 0}} \rho^*(e)\right)^{-1} .
\]
Moreover, by (\ref{eq:usagelb}), 
\[
 \left(\sum_{\substack{e\in E^*\\ \cN(\ga,e)\neq 0}} \rho^*(e)\right)^{-1}\ge \cN_{\rm min}\left(\sum_{\substack{e\in E^*\\ \cN(\ga,e)\neq 0}}\cN(\ga,e)\rho^*(e)\right)^{-1}.
\]
Finally, by complementary slackness, since $\ga\in\Ga^*$, we have $\mu^*(\ga)>0$, hence
\[
\sum_{\substack{e\in E^*\\ \cN(\ga,e)\neq 0}} \cN(\ga,e)\rho^*(e) = \sum_{e\in E} \cN(\ga,e)\rho^*(e) = 1.
\]
Taking the expectation on both sides of~\eqref{eq:pre-expectation} gives the claim.
\end{proof}

Theorem \ref{thm:lovasz} has some interesting consequences for $p$-modulus on randomly weighted graphs.
First, recall from Theorem \ref{thm:concave} (3) that the map 
\[
\si\mapsto \Mod_{p,\si}(\Ga)
\]
is concave for $1\le p<\infty$. In particular, if $\si\in\R_{>0}^E$ is a random variable, then by Jensen's inequality:
\begin{equation}\label{eq:jensen}
\bE\Mod_{p,\si}(\Ga)\le\Mod_{p,\bE\si}(\Ga).
\end{equation}
The following theorem gives a lower bound.
\begin{theorem}\label{thm:bounds}
Let $G=(V,E,\si)$ be a simple finite graph. Assume $\si$ is a random variable in $\R_{>0}^E$ with log-concave survival function.
Let $\Ga$ be a finite non-trivial family of objects on $G$ with $\cN_{\rm min}$ defined as in (\ref{eq:usagelb}).
Then, for $1\le p \le 2$,
\begin{equation}\label{eq:2mod-lovasz}
\bE\Mod_{p,\si}(\Ga)\ge \frac{\cN_{\rm min}^p}{\bE\si(E)}\Mod_{p,\bE\si}(\Ga)^2.
\end{equation}
\end{theorem}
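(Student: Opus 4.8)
The plan is to construct a chain of inequalities linking the expected $p$-modulus on the random graph to the $p$-modulus on the deterministic graph $\bE G$, using Theorem~\ref{thm:lovasz} as the bridge between the exponents $1$ and $2$, and applying the monotonicity relation~\eqref{eq:monotone-incr} twice: once on each realization of $\si$, and once on $\bE G$. First I would record the pointwise consequence of~\eqref{eq:monotone-incr} applied with the exponents $1\le p$. For every realization of $\si$ it gives $\left(\si(E)^{-1}\Mod_{1,\si}(\Ga)\right)\le\left(\si(E)^{-1}\Mod_{p,\si}(\Ga)\right)^{1/p}$, i.e.
\[
\Mod_{p,\si}(\Ga)\ge \si(E)^{1-p}\Mod_{1,\si}(\Ga)^p,
\]
and taking expectations yields $\bE\Mod_{p,\si}(\Ga)\ge \bE\!\left[\si(E)^{1-p}\Mod_{1,\si}(\Ga)^p\right]$.

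The main obstacle is that $\si(E)$ and $\Mod_{1,\si}(\Ga)$ are correlated, so the right-hand side cannot simply be factored. The key observation that resolves this is that the function $f(x,y)=x^{1-p}y^p$ is \emph{jointly} convex on $(0,\infty)^2$: it is the perspective $f(x,y)=x\,(y/x)^p$ of the convex function $t\mapsto t^p$ (recall $p\ge 1$), and the perspective of a convex function is jointly convex. Jensen's inequality therefore applies in the favorable direction to the random vector $(\si(E),\Mod_{1,\si}(\Ga))$, giving
\[
\bE\!\left[\si(E)^{1-p}\Mod_{1,\si}(\Ga)^p\right]\ge \left(\bE\si(E)\right)^{1-p}\left(\bE\Mod_{1,\si}(\Ga)\right)^p,
\]
which is exactly the ``expected'' analogue of~\eqref{eq:monotone-incr}. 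Combining the two displays gives $\bE\Mod_{p,\si}(\Ga)\ge \left(\bE\si(E)\right)^{1-p}\left(\bE\Mod_{1,\si}(\Ga)\right)^p$.

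It then remains to insert Theorem~\ref{thm:lovasz} and one deterministic monotonicity estimate. Lov\'asz's bound $\bE\Mod_{1,\si}(\Ga)\ge \cN_{\rm min}\Mod_{2,\bE\si}(\Ga)$, raised to the $p$-th power, controls $\left(\bE\Mod_{1,\si}(\Ga)\right)^p$ from below by $\cN_{\rm min}^p\Mod_{2,\bE\si}(\Ga)^p$. Applying~\eqref{eq:monotone-incr} on the deterministic graph $\bE G$ with the exponents $p\le 2$ gives $\Mod_{2,\bE\si}(\Ga)\ge \left(\bE\si(E)\right)^{(p-2)/p}\Mod_{p,\bE\si}(\Ga)^{2/p}$, hence $\Mod_{2,\bE\si}(\Ga)^p\ge \left(\bE\si(E)\right)^{p-2}\Mod_{p,\bE\si}(\Ga)^2$. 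Substituting back and collecting the powers of $\bE\si(E)$ (the exponents $1-p$ and $p-2$ sum to $-1$) produces exactly $\bE\Mod_{p,\si}(\Ga)\ge \frac{\cN_{\rm min}^p}{\bE\si(E)}\Mod_{p,\bE\si}(\Ga)^2$, which is~\eqref{eq:2mod-lovasz}.

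The only genuine difficulty is the convexity/Jensen step; everything else is bookkeeping with the monotonicity inequalities. As a sanity check I would verify the case $p=1$: there $f(x,y)=y$, Jensen is an equality, and the bound collapses to Lov\'asz composed with the single deterministic inequality $\bE\si(E)\,\Mod_{2,\bE\si}(\Ga)\ge \Mod_{1,\bE\si}(\Ga)^2$; likewise, taking $\si$ deterministic recovers~\eqref{eq:monotone-incr} itself. Finally, I would confirm that the expectations are finite so that Jensen is legitimate, which follows from the admissible density $\rho\equiv \cN_{\rm min}^{-1}$ giving $\Mod_{1,\si}(\Ga)\le \cN_{\rm min}^{-1}\si(E)$, and hence $\bE\Mod_{1,\si}(\Ga)<\infty$ whenever $\bE\si(E)<\infty$.
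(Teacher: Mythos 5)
Your argument is correct and follows essentially the same route as the paper: both proofs apply~\eqref{eq:monotone-incr} pointwise in each realization of $\si$, decouple $\bE\bigl[\si(E)^{1-p}\Mod_{1,\si}(\Ga)^p\bigr]$ into $(\bE\si(E))^{1-p}(\bE\Mod_{1,\si}(\Ga))^p$, and then chain Theorem~\ref{thm:lovasz} with~\eqref{eq:monotone-incr} on $\bE G$. The only difference is cosmetic: the paper justifies the decoupling step by H\"older's inequality applied to $\si(E)^{1/q}\Mod_{p,\si}(\Ga)^{1/p}$, whereas you use Jensen's inequality for the jointly convex perspective function $x^{1-p}y^p$ --- these are equivalent formulations of the same estimate.
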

\begin{proof}
When $1<p\le 2$ we have, by~\eqref{eq:monotone-incr},
\[
\Mod_{2,\bE\si}(\Ga)\ge \bE\si(E)^{1-2/p}\Mod_{p,\bE\si}(\Ga)^{2/p}.
\]
So by Theorem \ref{thm:lovasz} we get
\begin{equation}
\label{eq:lovasz-1p}
\bE\Mod_{1,\si}(\Ga)  \ge  \cN_{\rm min}\bE\si(E)^{1-2/p}\Mod_{p,\bE\si}(\Ga)^{2/p}.
\end{equation}
Letting $p\rightarrow 1$ and by continuity in $p$ (Theorem~\ref{thm:generalize}) we get
\[
\bE\Mod_{1,\si}(\Ga)  \ge  \frac{\cN_{\rm min}}{\bE\si(E)}\Mod_{1,\bE\si}(\Ga)^2
\]
Moreover, estimating the $1$-modulus in terms of $p$-modulus, using~\eqref{eq:monotone-incr} a second time, and then applying H\"{o}lder's inequality gives
\begin{equation}\label{eq:holder-1p}
\bE\Mod_{1,\si}(\Ga)
\le \bE\left( \si(E)^{1/q}\Mod_{p,\si}(\Ga)^{1/p} \right)
\le \bE\si(E)^{1/q}\bE\left(\Mod_{p,\si}(\Ga)\right)^{1/p}.
\end{equation}
Combining (\ref{eq:lovasz-1p}) and (\ref{eq:holder-1p}) gives~\eqref{eq:2mod-lovasz}.
\end{proof}
\begin{remark}
By combining (\ref{eq:jensen}) with (\ref{eq:2mod-lovasz}) we find that, for $1\le p\le 2$,
\[
\Mod_{p,\bE\si}(\Ga)\leq \frac{\bE\si(E)}{\cN_{\rm min}^p}.
\]
This is not a contradiction because this inequality is always satisfied, since the constant density $\rho\equiv\cN_{\rm min}^{-1}$ is always admissible.
\end{remark}
Theorem \ref{thm:bounds} leads one to wonder what lower bounds can be established for $\bE\Mod_{2,\si}(\Ga)$ when $\si$ is allowed to vanish and its survival function is not necessarily log-concave. For instance, it would be interesting to study what happens when the weights $\si(e)$ are independent Bernoulli variables, namely when $G$ is an Erd\H{o}s-R\'enyi graph. The situation there is complicated by the fact that the family $\Ga$ will change with every new sample of the weights $\si$. For instance, the family of all spanning trees will be different for different choices of $\si$.

\section{Appendix}

Here we give a proof of Theorem \ref{thm:concave}, which is a generalization of results in \cite[Section 6.2]{abppcw:ecgd2015}. First recall the following weaker version of Clarkson's inequalities.
\begin{proposition}\label{prop:clarkineq}
Let $1<p<\infty$. Set $M:=\max\{p,q\}$, where $p+q=pq$. Then, for any $f,g\in L^p$,
\begin{equation}\label{eq:wkclarkineq}
\left\|\frac{f+g}{2}\right\|_p^M+\left\|\frac{f-g}{2}\right\|_p^M\leq \frac{ \|f\|_p^M + \|g\|_p^M }{2}.
\end{equation}
\end{proposition}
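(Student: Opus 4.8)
The plan is to split the argument at $p=2$, according to whether $M=\max\{p,q\}$ equals $p$ or $q$. When $p\ge 2$ we have $M=p$, and \eqref{eq:wkclarkineq} is then exactly the classical first Clarkson inequality, which I would obtain by integrating a pointwise scalar estimate. First I would establish, for any scalars $s,t$ and any $p\ge 2$,
\[
\left|\frac{s+t}{2}\right|^p+\left|\frac{s-t}{2}\right|^p\le \frac{|s|^p+|t|^p}{2}.
\]
This follows from the parallelogram identity $|s+t|^2+|s-t|^2=2(|s|^2+|t|^2)$ together with two uses of the convexity of $x\mapsto x^{p/2}$: on the one hand $|s+t|^p+|s-t|^p=(|s+t|^2)^{p/2}+(|s-t|^2)^{p/2}\le(|s+t|^2+|s-t|^2)^{p/2}$ by superadditivity of $x\mapsto x^{p/2}$ on $\R_{\ge 0}$ (valid since $p/2\ge 1$), and on the other hand $(2(|s|^2+|t|^2))^{p/2}\le 2^{p-1}(|s|^p+|t|^p)$ by the same convexity. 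Applying the scalar bound pointwise with $s=f(x)$, $t=g(x)$ and integrating against the underlying measure yields \eqref{eq:wkclarkineq} with $M=p$, since $\|h\|_p^p=\int|h|^p$.

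For $1<p\le 2$ we have $M=q\ge 2$, and now a pointwise-then-integrate argument is unavailable, because $\|h\|_p^M=(\int|h|^p)^{q/p}$ depends nonlinearly on $\int|h|^p$. Here I would invoke the classical second Clarkson inequality
\[
\left\|\frac{f+g}{2}\right\|_p^q+\left\|\frac{f-g}{2}\right\|_p^q\le\left(\frac{\|f\|_p^p+\|g\|_p^p}{2}\right)^{q-1},
\]
and then reduce its right-hand side to the desired form. Writing $a:=\|f\|_p$ and $b:=\|g\|_p$ and using the identities $q-1=q/p=1/(p-1)$, the right-hand side equals $\left(\frac{a^p+b^p}{2}\right)^{q/p}$, which is at most $\frac{a^q+b^q}{2}$ by the power-mean inequality $\left(\frac{a^p+b^p}{2}\right)^{1/p}\le\left(\frac{a^q+b^q}{2}\right)^{1/q}$ (valid since $p\le q$). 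This is precisely \eqref{eq:wkclarkineq} with $M=q$.

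The main obstacle is exactly the regime $1<p\le 2$: unlike the case $p\ge 2$, it does not reduce to a scalar pointwise inequality, so one must appeal to the genuinely global second Clarkson inequality rather than integrate a local estimate. If a fully self-contained treatment is desired, I would derive the second inequality from the first by the standard duality pairing of $L^p$ with $L^q$, realizing the relevant norms as suprema of linear functionals over the unit ball of $L^q$; this duality step is the only part of the argument that goes beyond the parallelogram law, the convexity of $x\mapsto x^{p/2}$, and the monotonicity of power means.
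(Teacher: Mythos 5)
Your argument is correct, but note that the paper does not actually prove this proposition: it is simply recalled as ``a weaker version of Clarkson's inequalities,'' so any comparison is with the classical literature rather than with an in-paper argument. Your case split at $p=2$ is the standard one, and both halves check out. For $p\ge 2$ the pointwise inequality $\left|\frac{s+t}{2}\right|^p+\left|\frac{s-t}{2}\right|^p\le\frac{|s|^p+|t|^p}{2}$, obtained from the parallelogram law, superadditivity of $x\mapsto x^{p/2}$, and convexity, integrates correctly because $\|h\|_p^p$ is an integral; this gives the first Clarkson inequality in full, which is exactly the statement with $M=p$. For $1<p\le 2$ your reduction of the second Clarkson inequality to the stated form via the power-mean inequality $\bigl(\frac{a^p+b^p}{2}\bigr)^{1/p}\le\bigl(\frac{a^q+b^q}{2}\bigr)^{1/q}$ is also correct (and explains why the proposition is the ``weaker'' version). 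The one caveat is your closing remark: deriving the second Clarkson inequality from the first by a ``standard duality pairing'' is not as routine as you suggest --- the usual self-contained proofs go through Hanner's inequalities, a scalar inequality combined with Minkowski's inequality for exponents below $1$, or complex interpolation, and the duality route requires a genuinely clever manipulation rather than just realizing norms as suprema of linear functionals. Since you explicitly flag that step as an appeal to a classical result (which is all the paper itself does for the whole proposition), the proof as written is acceptable; just do not present the duality derivation as if it were a one-line consequence.
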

Next we translate Proposition \ref{prop:clarkineq} in the language of $p$-modulus.
\begin{lemma}\label{lem:quasimin}
Let $G=(V,E,\si)$ be a graph and $\Ga$ a finite, non-empty and non-trivial family of objects on $G$. Let $1<p<\infty$ and set $M:=\max\{p,q\}$, where $pq=p+q$. Let $\rho^*$ denote the (unique) extremal density for $\Mod_{p,\si}(\Ga)$. Then, for every $\rho\in\Adm(\Ga)$:
\[
\|\rho-\rho^*\|_p^M\leq 2^{M-1}\si_{\rm min}^{-M/p}\left(\cE_{p,\si}(\rho)^{M/p}-\Mod_{p,\si}(\Ga)^{M/p}  \right)
\]
where $\si_{\rm min}=\min_{e\in E}\si(e)$.
\end{lemma}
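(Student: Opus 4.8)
The plan is to deduce the estimate directly from the weak Clarkson inequality of Proposition~\ref{prop:clarkineq} by choosing the two functions so that the unweighted $L^p$ norms appearing there become $p$-energies. Concretely, I would work in $L^p(E)$ with counting measure and apply Proposition~\ref{prop:clarkineq} to $f := \si^{1/p}\rho$ and $g := \si^{1/p}\rho^*$ (pointwise, i.e.\ $f(e) = \si(e)^{1/p}\rho(e)$). With this choice $\|f\|_p^p = \sum_{e\in E}\si(e)\rho(e)^p = \cE_{p,\si}(\rho)$ and, since $\rho^*$ is extremal, $\|g\|_p^p = \cE_{p,\si}(\rho^*) = \Mod_{p,\si}(\Ga)$. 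Raising to the power $M/p$, the right-hand side of~\eqref{eq:wkclarkineq} becomes $\tfrac12\big(\cE_{p,\si}(\rho)^{M/p} + \Mod_{p,\si}(\Ga)^{M/p}\big)$, which already matches the shape of the target.

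The key mechanism is the convexity of $\Adm(\Ga)$. Since both $\rho$ and $\rho^*$ are admissible and $\Adm(\Ga)$ is defined by the linear constraints $\cN\rho\ge\one$, $\rho\ge 0$, the midpoint $\tfrac12(\rho+\rho^*)$ is again admissible; hence its energy is at least the modulus. In the present notation this reads $\left\|\tfrac{f+g}{2}\right\|_p^p = \cE_{p,\si}\!\big(\tfrac{\rho+\rho^*}{2}\big) \ge \Mod_{p,\si}(\Ga)$, so $\left\|\tfrac{f+g}{2}\right\|_p^M \ge \Mod_{p,\si}(\Ga)^{M/p}$. Substituting this lower bound into~\eqref{eq:wkclarkineq} and cancelling one copy of $\Mod_{p,\si}(\Ga)^{M/p}$ isolates the difference term as
\[
\left\|\frac{f-g}{2}\right\|_p^M \le \frac12\left(\cE_{p,\si}(\rho)^{M/p} - \Mod_{p,\si}(\Ga)^{M/p}\right).
\]

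It remains to pass from the weighted quantity $\left\|\tfrac{f-g}{2}\right\|_p$ to the unweighted norm $\|\rho-\rho^*\|_p$ in the statement. Since $f-g = \si^{1/p}(\rho-\rho^*)$ and $\si(e)\ge\si_{\rm min}$ for every $e$, I would estimate $\left\|\tfrac{f-g}{2}\right\|_p^p = 2^{-p}\sum_{e\in E}\si(e)|\rho(e)-\rho^*(e)|^p \ge 2^{-p}\si_{\rm min}\|\rho-\rho^*\|_p^p$, and then raise to the power $M/p$ to produce the factors $2^{-M}$ and $\si_{\rm min}^{M/p}$. Combining with the displayed inequality and clearing constants yields exactly the claimed bound with prefactor $2^{M-1}\si_{\rm min}^{-M/p}$. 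There is no serious obstacle here; the only points requiring care are the consistent bookkeeping of the exponent $M/p$ (recall $M/p=1$ when $p\ge 2$ but $M/p>1$ when $1<p<2$) and the observation that all terms are nonnegative, so that raising the weighted lower bound to the power $M/p$ preserves the inequality. The genuine content of the argument is the convexity and admissibility of the midpoint, which is what turns Clarkson's uniform-convexity estimate into a quantitative minimality gap for $\rho^*$.
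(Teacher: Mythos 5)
Your argument is correct and is essentially the paper's own proof: the same substitution $f=\si^{1/p}\rho$, $g=\si^{1/p}\rho^*$ into the weak Clarkson inequality, the same use of convexity of $\Adm(\Ga)$ to bound the midpoint energy below by the modulus, and the same $\si_{\rm min}$ estimate to pass to the unweighted norm, with matching constants. No differences worth noting.
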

In particular, if $\rho$ is almost a minimizer, then $\rho$ must be close to $\rho^*$.
\begin{proof}
Let $f(e):=\si(e)^{1/p}\rho(e)$ and $f^*(e)=\si(e)^{1/p}\rho^*(e)$. Then
\[
\|f\|_p^p=\cE_{p,\si}(\rho)\qquad\text{and}\qquad \|f^*\|_p^p=\Mod_{p,\si}(\Ga).
\]
Also
\[
\|f-f^*\|_p^p =\sum_{e\in E} \si(e)|\rho(e)-\rho^*(e)|^p\ge \si_{\rm min}\|\rho-\rho^*\|_p^p,
\]
and, since $\Adm(\Ga)$ is convex,
\[
\left\|\frac{f+f^*}{2}\right\|_p^p=\sum_{e\in E}\si(e) \left|\frac{\rho(e)+\rho^*(e)}{2}\right|^p=\cE_{p,\si}\left(\frac{\rho+\rho^*}{2}\right)\geq \Mod_{p,\si}(\Ga).
\]
Applying (\ref{eq:wkclarkineq}) to $f$ and $f^*$ and substituting, we obtain
\[
\Mod_{p,\si}(\Ga)^{M/p}+\si_{\rm min}^{M/p}2^{-M}\|\rho-\rho^*\|_p^{M}\leq\frac{1}{2}\left(\cE_{p,\si}(\rho)^{M/p} +\Mod_{p,\si}(\Ga)^{M/p} \right)
\]
\end{proof}
We are now ready to prove Theorem \ref{thm:concave}.
\begin{proof}[Proof of Theorem \ref{thm:concave}]
To show (1),
fix $\si_1,\si_2\in\R_{>0}^E$. Assume first that $\Mod_{p,\si_2}(\Ga)\le \Mod_{p,\si_1}(\Ga)$. Also recall that $\rho^*_{\si_2}\le\cN_{\rm min}^{-1}$, by Remark \ref{rem:remarks}(b). We have
\begin{align*}
|\Mod_{p,\si_1}(\Ga)-\Mod_{p,\si_2}(\Ga)| &
\leq \cE_{p,\si_1}(\rho^*_{\si_2}) - \Mod_{p,\si_2}(\Ga)\\
&=\cE_{p,\si_1}(\rho^*_{\si_2}) - \cE_{p,\si_2}(\rho^*_{\si_2}) \\
& = \sum_{e\in E} (\si_1(e)-\si_2(e))\rho^*_{\si_2}(e)^p\\
& \le \cN_{\rm min}^{-p}\|\si_1-\si_2\|_1
\end{align*}
A similar argument holds when $\Mod_{p,\si_1}(\Ga)\le \Mod_{p,\si_2}(\Ga)$.
This establishes the Lipschitz continuity of the map $\phi$.

To show (2), by Lemma \ref{lem:quasimin} and $M\geq 2$,
\[
\|\rho^*_{\si_2}-\rho^*_{\si_1}\|_p^M\leq 2^{M-1}\si_{1, {\rm min}}^{-M/p}\left(\cE_{p,\si_1}(\rho^*_{\si_2})^{M/p}-\Mod_{p,\si_1}(\Ga)^{M/p}  \right).
\]
So it's enough to show that 
\[
\cE_{p,\si_1}(\rho^*_{\si_2})\longrightarrow
\Mod_{p,\si_1}(\Ga)\qquad\text{as $\si_2\rightarrow\si_1$}
\]
But
\begin{align*}
\Mod_{p,\si_1}(\Ga) &\leq \cE_{p,\si_1}(\rho^*_{\si_2})=\sum_{e\in E}\si_1(e)\rho^*_{\si_2}(e)^p\\
&=\sum_{e\in E}\si_2(e)\rho^*_{\si_2}(e)^p + \sum_{e\in E}(\si_1(e)-\si_2(e))\rho^*_{\si_2}(e)^p\\
&\le \Mod_{p,\si_2}(\Ga) +
 \cN_{\rm min}^{-p} \|\si_1-\si_2\|_1.
\end{align*}
And, as $\si_2\rightarrow\si_1$, the last line converges to $\Mod_{p,\si_1}(\Ga)$ by continuity of the map $\phi$ in part (1).

To show (3), fix $\si_0,\si_1\in\R_{>0}^E$ and $t\in [0,1]$. Let $\rho^*_t$ be extremal for $\si_t:=t\si_1+(1-t)\si_0$.
Then, since $\rho^*_t\in\Adm(\Ga)$,
\begin{align*}
t\Mod_{p,\si_1}(\Ga)+(1-t)\Mod_{p,\si_0}(\Ga) &\le t\cE_{p,\si_1}(\rho^*_t)+(1-t)\cE_{p,\si_0}(\rho^*_t)\\
& =\sum_{e\in E} \si_t(e) \rho^*_t(e)^p = \Mod_{p,\si_t}(\Ga).
\end{align*}
This proves concavity.

To show (4), fix $\tau\in\R^E$ and let $\ep>0$. Set $\si_\ep:=\si+\ep\tau$. Note that for $\ep$ small enough $\si_\ep\in\R_{>0}^E$. Let $\rho^*_\ep$ be the extremal density corresponding to $\si_\ep$. For any $\rho\in\R_{\ge 0}^E$:
\[
\cE_{p,\si_\ep}(\rho)=\sum_{e\in E} (\si(e)+\ep\tau(e))\rho(e)^p=\cE_{p,\si}(\rho)+\ep\cE_{p,\tau}(\rho).
\]
So
\[
\Mod_{p,\si_\ep}(\Ga)=\cE_{p,\si_\ep}(\rho^*_\ep)=\cE_{p,\si}(\rho^*_\ep)+\ep\cE_{p,\tau}(\rho^*_\ep)\geq \Mod_{p,\si}(\Ga)+\ep\cE_{p,\tau}(\rho^*_\ep),
\]
and
\[
\Mod_{p,\si_\ep}(\Ga)\le\cE_{p,\si_\ep}(\rho^*_0)= \Mod_{p,\si}(\Ga)+\ep\cE_{p,\tau}(\rho^*_0),
\]
Thus
\[
\cE_{p,\tau}(\rho^*_\ep)\leq \frac{\phi(\si+\ep\tau)-\phi(\si)}{\ep}\leq \cE_{p,\tau}(\rho^*_0).
\]
But by part (2), $\rho^*_\ep\rightarrow\rho^*_0$, as $\ep\rightarrow 0$. So the directional derivative of $\phi$ in the direction of $\tau$ is :
\[
D_\tau(\phi)=\sum_{e\in E}\tau(e)\rho^*_{\si}(e)^p.
\]
Since part (2) then implies that all directional derivatives are continuous, it follows that $\phi$ is differentiable~\cite[Theorem 9.21]{baby-rudin}.

\end{proof}
\bibliographystyle{siamplain}
\bibliography{fulkerson-duality}
\def\cprime{$'$}

\end{document}